\def\Spnr{\mathrm{Sp}(2d,\R)}
\def\Gltwonr{\mathrm{GL}(2d,\R)}
\newcommand{\tfa}{time-frequency analysis}
\newcommand{\ft}{Fourier transform}
\newcommand{\field}[1]{\mathbb{#1}}
\newcommand{\bR}{\field{R}}
\newcommand{\bC}{\field{C}}
\newtheorem{tm}{theorem}[section]
\newtheorem{lemma}[tm]{Lemma}
\newtheorem{theorem}{Theorem}[section]
\newtheorem{corollary}[theorem]{Corollary}
\newtheorem{definition}[theorem]{Definition}
\newtheorem{proposition}[theorem]{Proposition}
\newtheorem{remark}[theorem]{Remark}
\newtheorem*{theorem*}{Theorem}
\newtheorem*{corollary*}{Corollary}
\newcommand{\beqa}{\begin{eqnarray*}}
	\newcommand{\eeqa}{\end{eqnarray*}}
\def\la{\lambda}
\def\cF{\mathcal{F}}
\def\cS{\mathcal{S}}
\def\cB{\mathcal{B}}
\def\cU{\mathcal{U}}
\def\rd{\bR^d}
\def\rdd{{\mathbb{R}^{2d}}}
\def\R{\right)}
\def\<{\left<}
\def\>{\right>}
\def\mv1{M_v^1}
\def\Mmpq{M_m^{p,q}}
\def\phas{(x,\omega )}
\def\o{\omega}
\def\R{\mathbb{R}}
\def\Ren{\mathbb{R}^d}
\def\Renn{\rdd}
\def\sch{\mathcal{S}}
\def\Fur{\mathcal{F}}
\def\Sn2{S_{2}(L^{2}(\Ren))}
\def\S1{S_{1}(L^{2}(\Ren))}
\def\sig00{\sigma_{0,0}}
\def\la{\langle}
\def\ra{\rangle}
\newcommand{\GLL}{\mathrm{GL}\left(2d,\mathbb{R}\right)}
\begin{document}

\title[Linear perturbations of the Wigner distribution]{Linear perturbations of the Wigner distribution and the Cohen class}
	\author{Elena Cordero}
\address{Dipartimento di Matematica, Universit\`a di Torino, Dipartimento di
	Matematica, via Carlo Alberto 10, 10123 Torino, Italy}
\email{elena.cordero@unito.it}
\thanks{}

\author{S. Ivan Trapasso}
\address{Dipartimento di Scienze Matematiche, Politecnico di Torino, corso
	Duca degli Abruzzi 24, 10129 Torino, Italy}
\email{salvatore.trapasso@polito.it}
\thanks{}

\subjclass[2010]{42A38,42B35,46F10,46F12,81S30}
\keywords{Time-frequency analysis, Wigner distribution, Cohen's class,  modulation spaces, Wiener amalgam spaces}

\maketitle

\begin{abstract}The Wigner distribution is a milestone of Time-frequency Analysis. In order to cope with its drawbacks while preserving the desirable features that made it so popular, several kind of modifications have been proposed. This contributions fits into this perspective. We introduce a family of phase-space representations of Wigner type associated with invertible matrices and explore their general properties. As main result, we provide a characterization for the Cohen's class \cite{cohen gdist 66, cohen tfa 95}. This feature suggests to interpret this family of representations as linear perturbations of the Wigner distribution. We show which of its properties  survive under linear  perturbations and which ones are truly distinctive of its central role. 
\end{abstract}

\section{Introduction}
One of the major problems in Signal Analysis is the search for the best possible description of signals' features in terms of their pattern in time or frequency domain. It turns out that looking separately at these aspects is like taking front-view and side-view pictures of an object. Indeed, due to the ubiquitous presence of the uncertainty principle, the more accurate is the account on time evolution, the less can be said about the spectral one. This unavoidable issue can be effectively approached by jointly using both variables in order to get a faithful portrait of the signal's properties. This is in fact the paradigm of Time-frequency analysis, whose success is proven by the vast literature which has been developing from theoretical and applied problems, see \cite{cohen tfa 95,Grochenig_2001_Foundations,hlaw book} and the references therein. 

A relevant instrument for both purposes is the Wigner transform, which is defined for any $f,g\in L^2(\rd)$ as
\[
W(f,g)(x,\omega)=\int_{\rd} e^{-2\pi i y\cdot \omega}f\left(x+\frac{1}{2}y\right)\overline{g\left(x-\frac{1}{2}y\right)}dy,\qquad (x,\omega)\in\rdd.
\] 
Even if its appearance is not much revealing, the central role of this representation follows from the large number of desirable properties it satisfies. For a complete account we refer to the textbooks \cite{deGossonWigner,Grochenig_2001_Foundations,WongWeylTransform1998}. Properties of the Wigner transform are also found in \cite{FH,H}. On the other hand, again due to the multi-faceted consequences of the uncertainty principle, there is a theoretical inviolable edge surrounding the ideal time-frequency distribution: one needs to acknowledge that certain properties, though looking very natural, are mutually incompatible. For instance, in view of the physical interpretation of a phase-space distribution as signal's energy density in time-frequency space, the lack of positivity of the Wigner transform and results like Hudson's Theorem (cf. \cite{hudson 74,janssen huds 84}) raise serious concerns about the reasonable interpretation of its output. 

In order to fix this issue while retaining the good properties, smoothing the Wigner representation by means of convolution with a suitable temperate distribution $\sigma\in\cS'(\rdd)$ seemed a good compromise: the time-frequency transformations of the form 
\[
Q(f,g)=W(f,g)*\sigma, \quad f,g\in\cS(\rd),\]
are said to belong to the Cohen's class, cf. \cite{cohen gdist 66,cohen tfa 95,cohen weyl 12, Grochenig_2001_Foundations,janssen posspread 97}. There is plenty of results relating the properties of $Qf$ to suitable conditions on the Cohen's kernel $\sigma$, but one still has to deal with compatibility conditions (see the discussion in \cite[Sec. 2.5]{janssen posspread 97}). 
Within the Cohen's class, the so-called $\tau$-Wigner distributions deserve a special mention. Mimicking the definition of Weyl transform, one can introduce a family of time-frequency representations, depending on the parameter $\tau \in  [0,1]$, as follows: for any $f,g\in L^2(\rd)$:
\begin{equation}\label{wignertau}
W_{\tau }(f,g)(x,\omega )=\int_{\mathbb{R}^{d}}e^{-2\pi iy\cdot\zeta }f(x+\tau y)%
\overline{g(x-(1-\tau )y)}\, dy, \qquad (x,\omega)\in\rdd.
\end{equation}
We recapture the Wigner transform for $\tau=1/2$. These distributions have been investigated in several aspects, cf. for example \cite{boghuds, bogetal,cdet18,cnt18,janssen posspread 97}. They are  members of the Cohen's class, with a chirp-like kernel given by (cf. \cite[Proposition 5.6]{bogetal}):
\begin{equation}\label{kerneltau}
\sigma_{\tau}\left(x,\omega\right)=\frac{2^{d}}{\left|2\tau-1\right|^{d}}e^{2\pi i\frac{2}{2\tau-1}x\cdot \omega}, \quad (x,\omega)\in\rdd, \quad \tau \in [0,1]\setminus \left\{ \frac{1}{2} \right\}.
\end{equation}
It comes not as a surprise that several properties of the Wigner distribution still hold true in this context. We could meaningfully rephrase this statement by interpreting $\tau$ as a perturbation parameter and saying that these properties are stable under perturbations.

This observation effectively represents the spirit of this contribution. Inspired by the $\tau$-Wigner transforms and by the perturbative approach, we are first lead to introduce bilinear distributions of Wigner type associated with matrices, such as 
\begin{equation}\label{BAe}
\mathcal{B}_{A}\left(f,g\right)\left(x,\omega\right)=\int_{\mathbb{R}^{d}}e^{-2\pi i\omega y}f\left(A_{11}x+A_{12}y\right)\overline{g\left(A_{21}x+A_{22}y\right)}dy,\quad (x,\omega)\in\rdd,
\end{equation}

 where $A=\left(\begin{array}{cc}
A_{11} & A_{12}\\
A_{21} & A_{22}
\end{array}\right)$ is a $2d\times 2d$ invertible matrix. 
For $f=g$, we simply write $\mathcal{B}_{A}f$. 

Representations of this type have already been investigated, see e.g. \cite{bayer,bco quadratic,toft bil 17}, and indeed we limit ourselves to collect and occasionally prove a few results of general interest. Rather, the core of this work lies in the relation with the Cohen's class, as expressed by the following result. 

\begin{theorem}\label{maint}
Let $A\in \bR^{2d\times 2d}$ be an invertible matrix. The distribution $\mathcal{B}_{A}$ belongs to the Cohen's class if and only if\textbf{ $A$ }has the following special form: 
\begin{equation}\label{AM}
A=A_{M}=\left(\begin{array}{cc}
I & M+(1/2)I\\
I & M-(1/2)I
\end{array}\right),
\end{equation}
where $I$ is the $d\times d$ identity matrix and  $M\in\mathbb{R}^{d\times d}$. Furthermore, in this case we have 
\begin{equation}\label{Bamtheta}
\mathcal{B}_{A_{M}}\left(f,g\right)=W\left(f,g\right)*\theta_{M},
\end{equation}
where the Cohen's kernel $\theta_{M}\in \cS'(\rdd)$ is given by
\begin{equation}\label{thetaM}
\theta_{M}=\cF_\sigma \chi_M,\quad \mbox{with}\quad \chi_M(\xi,\eta)=e^{2\pi i \eta\cdot M\xi},
\end{equation}
i.e., the symplectic Fourier transform (cf. \eqref{SyFT} below) of the chirp-like function $\chi_M$.
\end{theorem}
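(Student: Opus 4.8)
The plan is to detect Cohen's class membership through a Fourier multiplier condition. By the convolution theorem for tempered distributions, the identity $\mathcal{B}_A(f,g)=W(f,g)*\theta$ for a fixed $\theta\in\cS'(\rdd)$ is equivalent to
\[
\cF\bigl(\mathcal{B}_A(f,g)\bigr)=\wh{\theta}\cdot\cF\bigl(W(f,g)\bigr),
\]
with the multiplier $\wh{\theta}$ independent of the pair $(f,g)$. So the first step is to compute the two full Fourier transforms. Writing $(\xi,\eta)$ for the variables dual to $(x,\omega)$ and integrating first in $\omega$ (which yields a $\delta$ forcing $y=-\eta$), I would get the (cross-ambiguity type) expressions
\[
\cF\bigl(W(f,g)\bigr)(\xi,\eta)=\intrd e^{-2\pi i x\cdot\xi}\,f\bigl(x-\tfrac12\eta\bigr)\overline{g\bigl(x+\tfrac12\eta\bigr)}\,dx,
\]
\[
\cF\bigl(\mathcal{B}_A(f,g)\bigr)(\xi,\eta)=\intrd e^{-2\pi i x\cdot\xi}\,f(A_{11}x-A_{12}\eta)\overline{g(A_{21}x-A_{22}\eta)}\,dx.
\]
Both are sesquilinear forms in $(f,g)$ whose integral kernels, viewed as distributions in the arguments $(u,v)$ of $f$ and $\bar g$, are smooth phases carried by $d$-dimensional affine subspaces of $\rdd$.

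The second step is the ``only if'' direction. For these two forms to be proportional with a ratio $\wh{\theta}(\xi,\eta)$ not depending on $(f,g)$, their kernels must have the same support and proportional densities. The $W$-kernel is carried by $\{(u,v):v-u=\eta\}$, whose linear direction is the diagonal of $\rdd$, while the $\mathcal{B}_A$-kernel is carried by the image of $x\mapsto(A_{11}x-A_{12}\eta,\,A_{21}x-A_{22}\eta)$. Forcing this image to be the diagonal yields $A_{11}x=A_{21}x$ for all $x$ together with surjectivity, hence $A_{11}=A_{21}=:P$ invertible, and matching the affine offset then gives $A_{12}-A_{22}=I$. After the substitution $u=Px-A_{12}\eta$ the $\mathcal{B}_A$-form becomes $|\det P|^{-1}\intrd e^{-2\pi i\,P^{-1}(u+A_{12}\eta)\cdot\xi}f(u)\overline{g(u+\eta)}\,du$, while the $W$-form is $\intrd e^{-2\pi i(u+\frac12\eta)\cdot\xi}f(u)\overline{g(u+\eta)}\,du$. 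For the ratio to be extractable as an $u$-independent multiplier, the phases that are linear in $u$ must coincide, which forces $P=I$. Setting $M:=A_{12}-\tfrac12 I$ (so that $A_{22}=M-\tfrac12 I$) gives exactly the form $A=A_M$ of \eqref{AM}.

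The third step is the converse together with the identification of the kernel. For $A=A_M$ the above substitution is reversible and $|\det P|^{-1}=1$; cancelling the common factor $f(u)\overline{g(u+\eta)}$ and the $u$-linear phases leaves the explicit multiplier
\[
\wh{\theta_M}(\xi,\eta)=e^{-2\pi i\,\xi\cdot\bigl((M+\frac12 I)\eta-\frac12\eta\bigr)}=e^{-2\pi i\,\xi\cdot M\eta},
\]
which is independent of $(f,g)$ and thus certifies $\mathcal{B}_{A_M}\in$ Cohen's class, establishing \eqref{Bamtheta}. It then remains to recognize $\theta_M=\cF^{-1}\wh{\theta_M}$ as the symplectic Fourier transform of the chirp $\chi_M(\xi,\eta)=e^{2\pi i\eta\cdot M\xi}$; this is pure bookkeeping, reconciling the ordinary Fourier transform used above with the symplectic one of \eqref{SyFT} via the coordinate reflection relating the two.

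I expect the main obstacle to be the rigorous treatment of the proportionality of the two sesquilinear forms: one must argue cleanly, at the level of tempered distributions, that equality up to an $(f,g)$-independent factor forces both the coincidence of the affine supports and the agreement of the oscillatory densities on a nonempty open set of frequencies $(\xi,\eta)$ where the multiplier is nonzero, rather than merely verifying it on a convenient family of test functions. A secondary, purely technical nuisance will be keeping the Fourier-transform conventions consistent so that the computed kernel matches $\cF_\sigma\chi_M$ exactly, signs and transposes included.
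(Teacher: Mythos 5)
Your proposal is correct in substance, but its necessity argument takes a genuinely different route from the paper's. For sufficiency the two proofs are the same computation in different clothing: you take the ordinary Fourier transform and read off the multiplier $\wh{\theta_M}(\xi,\eta)=e^{-2\pi i\,\xi\cdot M\eta}$, while the paper applies the symplectic Fourier transform, using Proposition \ref{ft of btfd} to get $\cF_\sigma\mathcal{B}_{A_M}(f,g)=\chi_M\cdot Amb(f,g)$; these agree, since $\cF_\sigma\theta_M=\chi_M$ is equivalent to $\cF\theta_M(\xi,\eta)=e^{-2\pi i\,\xi\cdot M\eta}$, cf.\ \eqref{ThM}, so your bookkeeping worry is only that. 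For necessity, however, the paper never compares kernels: it observes that any member of the Cohen class inherits the Wigner covariance \eqref{cov wig}, then feeds $\alpha=\beta=\omega$, $a=b=x$, $f=g$ into the covariance formula of Theorem \ref{cov formula}; the conditions $\rho=0$ and $\sigma=\omega$ give $A_{11}=A_{21}=N$ and $A_{12}-A_{22}=I$, invertibility of $A$ yields $\det A=(-1)^d\det N\ne0$, and the condition $r=x$ forces $N=I$, giving \eqref{AM}. That argument is shorter because the covariance machinery was already built earlier in the paper; yours is self-contained on the Fourier side and produces the matrix constraints and the multiplier in one stroke, at the cost of the distribution-theoretic care you flagged.

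The technical step you identify as the main obstacle does close, and more cleanly than you fear. First, $\wh\theta$ is automatically a smooth function: take $f=g$ a Gaussian, so that $\cF W(f,f)$ is a nowhere-vanishing Gaussian; on any compact set, multiply the identity $\cF\mathcal{B}_A(f,f)=\wh\theta\cdot\cF W(f,f)$ by a smooth cutoff and divide by the Gaussian (locally bounded below), exhibiting $\wh\theta$ locally as a quotient of smooth functions --- note that global division in $\cS'$ is not available, since the reciprocal Gaussian is not tempered. Second, once $\wh\theta$ is a function, fix $(\xi,\eta)$ and use the Schwartz kernel theorem: the equality of sesquilinear forms for all $f,g\in\cS(\rd)$ becomes an equality of kernels in $(u,v)$. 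The $\mathcal{B}_A$-kernel is a nonzero smooth density on a $d$-dimensional affine subspace --- injectivity of $x\mapsto(A_{11}x,A_{21}x)$ follows from invertibility of $A$ --- so the identity $K_{\mathcal{B}}=\wh\theta(\xi,\eta)\,K_W$ forces $\wh\theta(\xi,\eta)\ne0$ for \emph{every} $(\xi,\eta)$. Consequently the support matching (yielding $A_{11}=A_{21}=P$ invertible and $A_{12}-A_{22}=I$) and the $u$-phase comparison (yielding $P^{-\top}\xi=\xi$, hence $P=I$) hold at all frequencies, and you need not restrict to a nonempty open set where the multiplier is nonzero.
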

We say that $A=A_{M}$ is
a {\bf Cohen-type matrix associated with $M\in\mathbb{R}^{d\times d}$}.

If $M$ is invertible, then  the kernel $\theta_M$  can be computed explicitly as
\begin{equation}\label{thM}
\theta_{M}\left(x,\omega\right)=\frac{1}{\left|\det M\right|}e^{2\pi ix\cdot M^{-1}\omega},
\end{equation}
(cf. Theorem \ref{intro2} below). 
Therefore, we are able to completely characterize a subfamily of the Cohen's class, in fact a very special one: \emph{its members can be meaningfully designed as linear perturbations of the Wigner distribution, their Cohen's kernel being  non-trivial chirp-like functions parametrized by $M$}. 
In particular, choosing $M=\left( \tau-1/2\right)I$, with $\tau\in [0,1]\setminus \{1/2\}$, we recapture the $\tau$-kernels in \eqref{kerneltau}.

These results are completely new  in the necessity part, whereas the sufficiency conditions widely extend the assumptions in \cite[Theorem 1.6.5]{bayer}. Indeed, the proof given here is quite different and allows to drop many restrictive hypotheses. 

In order to concretely unravel the effect of the perturbation matrix, in Lemma \ref{BAf} below we compute explicitly $\mathcal{B}_{A_{M}}\varphi_{\lambda}$, with $\varphi_{\lambda}\left(t\right)=e^{-\pi t^{2}/\lambda}$, $\lambda>0$.

The remaining parts of the paper are devoted to a thorough study of these phase-space transforms, always pointing at the comparison with the Wigner distribution. In particular, we show that most of its beautiful properties are preserved - rather, they are stable under linear perturbations, see Proposition \ref{bam prop}. On the other hand, the exceptional role of the Wigner and $\tau$-Wigner distributions stands out from the other representations (cf. Sec. 4.1.1).  

We then  study  the properties of the kernels $\theta_{M}$  in the framework of  modulation and Wiener amalgam spaces (cf. Section $2$ below). In line with intuition, we shall show that linear perturbations are time-frequency representations sharing the same smoothness and decay as the Wigner transform. Namely, 

\begin{theorem}
	According to the notation of  Theorem \ref{maint}, if $M\in\bR^{d\times d}$ is invertible, then
	\[
	\theta_{M}\in M^{1,\infty}\left(\rdd\right)\cap W\left(\mathcal{F}L^{1},L^{\infty}\right)\left(\rdd\right).
	\]
	Furthermore, let $f\in\mathcal{S}'\left(\mathbb{R}^{d}\right)$
	be a signal. Then, for $1\leq p,q\leq\infty$, we have 
	\[
	Wf\in M^{p,q}\left(\rdd\right) \Longleftrightarrow\mathcal{B}_{A_{M}}f\in M^{p,q}\left(\rdd\right).
	\]
	
\end{theorem}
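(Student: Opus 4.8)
The plan is to separate the statement into two independent tasks: first, the membership $\theta_M\in M^{1,\infty}\left(\rdd\right)\cap W\left(\mathcal F L^{1},L^{\infty}\right)\left(\rdd\right)$; second, the equivalence $Wf\in M^{p,q}\Leftrightarrow\mathcal{B}_{A_M}f\in M^{p,q}$, which by \eqref{Bamtheta} is exactly the assertion that convolution with $\theta_M$ preserves $M^{p,q}$ in both directions. Throughout I would exploit the explicit form \eqref{thM} of the kernel, which is available precisely because $M$ is invertible.

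For the first task I would record that \eqref{thM} exhibits $\theta_M$ as a non-degenerate quadratic chirp: writing $z=(x,\omega)\in\rdd$,
\[
\theta_M(z)=\frac{1}{|\det M|}\,e^{i\pi z^{\top}Bz},\qquad B=\begin{pmatrix}0 & M^{-1}\\ (M^{-1})^{\top} & 0\end{pmatrix},
\]
where $B$ is real, symmetric, and invertible (this is where invertibility of $M$ is used). The two memberships then reduce to computing the short-time Fourier transform of $\theta_M$ against a Gaussian window $\Phi$ on $\rdd$: this is a Gaussian integral whose modulus $|V_\Phi\theta_M|$ is, up to a constant, a Gaussian concentrated along the graph of the linear map determined by $B$ (a ``ridge'') and constant along the complementary directions. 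Checking that this ridge Gaussian has finite $M^{1,\infty}$ norm (inner $L^{1}$ in the space variables, outer $L^{\infty}$ in the frequency variables) and finite $W\left(\mathcal F L^{1},L^{\infty}\right)$ norm then amounts to elementary one-parameter Gaussian integrals. The invertibility of $B$ is exactly what guarantees that the non-decaying direction of the ridge is transverse to the variables carrying the $L^{1}$ norm, so that the $M^{1,\infty}$ norm is finite; degenerate chirps would fail this test. Equivalently one may invoke the known fact that non-degenerate chirps belong to both spaces, the chirp $\chi_M$ of \eqref{thetaM} being of the same type.

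For the second task I would use the Young-type convolution relation for modulation spaces, $M^{p,q}*M^{1,\infty}\hookrightarrow M^{p,q}$, valid for all $1\le p,q\le\infty$. Granting the first task, namely $\theta_M\in M^{1,\infty}$, relation \eqref{Bamtheta} gives immediately the implication $Wf\in M^{p,q}\Rightarrow\mathcal{B}_{A_M}f=Wf*\theta_M\in M^{p,q}$. For the converse I would invert the convolution explicitly: applying the symplectic Fourier transform (cf. \eqref{SyFT}) to \eqref{Bamtheta} and using \eqref{thetaM} turns convolution by $\theta_M$ into multiplication by the unimodular chirp $\chi_M$, so that
\[
\cF_\sigma(Wf)=\cF_\sigma\left(\mathcal{B}_{A_M}f\right)\cdot\overline{\chi_M}=\cF_\sigma\left(\mathcal{B}_{A_M}f\right)\cdot\chi_{-M},
\]
since $\overline{\chi_M}=\chi_{-M}$. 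Applying $\cF_\sigma$ once more and using $\cF_\sigma^{2}=\mathrm{Id}$ together with $\cF_\sigma\chi_{-M}=\theta_{-M}$ yields the inversion formula $Wf=\mathcal{B}_{A_M}f*\theta_{-M}$. As $-M$ is again invertible, the first task applied to $-M$ gives $\theta_{-M}\in M^{1,\infty}$, and the same convolution relation delivers $\mathcal{B}_{A_M}f\in M^{p,q}\Rightarrow Wf\in M^{p,q}$.

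The main obstacle is the first task: the explicit determination of the Gaussian short-time Fourier transform of the multidimensional chirp $\theta_M$ and the verification that the resulting ridge Gaussian lands in $M^{1,\infty}$ with the correct ordering of the mixed norm. This is the one genuinely technical point, and it is precisely where invertibility of $M$ is indispensable. Once the membership and the Young-type convolution relation are in place, the convolution step is routine, and the inversion formula $Wf=\mathcal{B}_{A_M}f*\theta_{-M}$ makes the equivalence perfectly symmetric in $M$ and $-M$.
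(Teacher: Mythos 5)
Your proposal is correct, and it is essentially the Fourier-dual implementation of the paper's own argument, so let me record where the two routes differ and what each buys. For the kernel membership, the paper does not compute a Gaussian STFT at all: it writes $\Theta_M=\mathcal{F}\theta_M$ as a dilation $D_{\tilde M}$ of the standard chirp $e^{2\pi i\xi\eta}$, with $\tilde M=\mathrm{diag}(-I,M)$ invertible exactly when $M$ is, and then combines the known membership of that chirp in $M^{1,\infty}\cap W(\mathcal{F}L^{1},L^{\infty})$ (Lemma \ref{chirp spaces}, from \cite{cdgn red int}) with the dilation properties of these spaces from \cite{cn met rep}; your ridge-Gaussian computation would reprove that lemma from scratch, and it does correctly isolate the invertibility of $B$ (equivalently of $M$) as the reason the inner $L^{1}$-integral over the time variables converges at fixed frequency, but as written it is only a sketch --- either carry out the Gaussian integral or fall back on the citation, as you yourself suggest. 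For the equivalence, the paper moves everything to the symplectic Fourier side, where convolution with $\theta_M$ becomes pointwise multiplication by a unimodular chirp, and invokes the module property $W(\mathcal{F}L^{1},L^{\infty})\cdot W(\mathcal{F}L^{p},L^{q})\subset W(\mathcal{F}L^{p},L^{q})$ of \cite{fei was prod}, treating the converse by multiplying with the reciprocal chirp $\theta_M^{-1}$, which is again of the same type; your route stays on the modulation-space side and uses $M^{p,q}\ast M^{1,\infty}\hookrightarrow M^{p,q}$, which via \eqref{W-M} is precisely the Fourier image of that module property, so the two key lemmas are equivalent --- just attach a citation (e.g.\ the convolution relations of Cordero--Gr\"ochenig), since the paper never states the convolution form. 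Your explicit inversion $Wf=\mathcal{B}_{A_M}f\ast\theta_{-M}$, obtained from $\overline{\chi_M}=\chi_{-M}$ and $\mathcal{F}_\sigma^{2}=\mathrm{Id}$, is a clean equivalent of the paper's division step and makes the symmetry in $\pm M$ transparent; it even sidesteps a small notational slip in the paper's proof, where $\theta_M\cdot Amb(f)$ appears for what should be $\chi_M\cdot Amb(f)=\mathcal{F}_\sigma\theta_M\cdot Amb(f)$ (harmless, as both are non-degenerate chirps). One gloss you share with the paper: the identity \eqref{Bamtheta} is established there for $f,g\in\mathcal{S}(\mathbb{R}^{d})$, whereas the theorem concerns $f\in\mathcal{S}'(\mathbb{R}^{d})$; the extension is legitimate because multiplication by the chirp $\chi_M$ is continuous on $\mathcal{S}'(\rdd)$ after a symplectic Fourier transform, but in a polished write-up this deserves a sentence.
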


The condition  $\theta_M \in W(FL^1,L^\infty)$ is quite natural, since it implies  the boundedness of Fourier multipliers on modulation spaces and the corresponding applications to PDE's (see the pioneering works \cite{grafakos,benyi}).

If $M$ is not invertible, then the statements of the previous result  are not valid in general. Indeed, as simple example, consider $M=0_d$, then the chirp-like function reduces to $\chi_M\equiv1$ and the related Cohen's kernel is given by $\theta_M=\cF_\sigma \chi_M=\delta$. Now, we have $\delta  \in M^{1,\infty}\left(\rdd\right)\setminus W\left(\mathcal{F}L^{1},L^{\infty}\right)\left(\rdd\right)$, cf. \cite[page 14]{cdgn tfa bj}.

An intriguing aspect that has been taken into account concerns the role of interferences. The emergence of unwanted artefacts is a well-known drawback of any quadratic representation and poses a serious problem for practical  purposes. In order to circumvent these effects as much as possible, a number of alternative distributions and damping solutions have been proposed, cf. \cite{cohen tfa 95,hlaw book,hlaw qtf} for a comprehensive discussion. Unfortunately, linear perturbations of the Wigner distribution do not result in an effective damping of interference effects. A simple toy model inspired by the discussions in \cite{bco quadratic,bogetal} shows that the effect of  perturbation consists of distortion and relocation of cross terms. In fact, this is not surprising given that the effective damping of interferences is somewhat related to the global decay of the Cohen's kernel, while $\theta_{M}\in L^{\infty}(\rdd)$. We suggest that convolution with suitable decaying distributions may provide an improvement, but the concrete risk is to loose other desirable properties. 

To conclude, we characterize boundedness of $\cB_{A_M}$ on Lebesgue, modulation and Wiener amalgam spaces. By extending known results for the Wigner distribution, we show that the continuity on these functional spaces is indeed a stable property under perturbation. 

The paper is organized as follows. In Section 2 we collect basic results of Time-frequency Analysis, essentially to fix the notation. In particular, we review the fundamental properties of modulation and Wiener amalgam spaces, but also of bilinear coordinate transformations and partial Fourier transform. In Section 3 we introduce distributions of Wigner type associated with invertible matrices in full generality and prove their relevant properties. In Section 4 we specialize to the Cohen's class and completely characterize the most important time-frequency features of the distributions arising as linear perturbations of the Wigner transform. 

\section{Preliminaries}

\textbf{Notation.} We define $t^2=t\cdot t$, for $t\in\rd$, and
$xy=x\cdot y$ is the scalar product on $\Ren$. The Schwartz class is denoted by  $\sch(\Ren)$, the space of temperate distributions by  $\sch'(\Ren)$.   The brackets  $\la f,g\ra$ denote the extension to $\sch' (\Ren)\times\sch (\Ren)$ of the inner product $\la f,g\ra=\int f(t){\overline {g(t)}}dt$ on $L^2(\Ren)$ - the latter being conjugate-linear in the second entry. The conjugate exponent $p'$ of $p \in [1,\infty]$ is defined by $1/p+1/p'=1$.

The Fourier transform of a function $f$ on $\rd$ is normalized as
\[
\Fur f(\omega)= \int_{\rd} e^{-2\pi i x\omega} f(x)\, dx,\qquad \omega \in \rd.
\]

For any $x,\omega \in \rd$, the modulation $M_{\omega}$ and translation $T_{x}$ operators are defined as 
\[
M_{\omega}f\left(t\right)= e^{2\pi it \omega}f\left(t\right),\qquad T_{x}f\left(t\right)= f\left(t-x\right).
\]
Their composition $\pi\phas=M_\omega T_x$ is called a time-frequency shift.

Given a complex-valued function $f$ on $\rd$, the involution $f^{*}$ is defined as $$f^*(t)\coloneqq \overline{f(-t)}, \quad t\in\rd.$$ 

Recall that the short-time Fourier transform of a signal $f\in\cS'(\rd)$ with respect to the window function $g \in \cS(\rd)$ is defined as
\begin{equation}\label{STFTdef}
V_gf\phas=\langle f,\pi\phas g\rangle=\Fur (fT_x g)(\omega)=\int_{\Ren}
f(y)\, {\overline {g(y-x)}} \, e^{-2\pi iy \o }\,dy.
\end{equation}
 It is not difficult to derive the fundamental identity of time-frequency analysis \cite[pag. 40]{Grochenig_2001_Foundations}:
\begin{equation}\label{FI}
V_{g}f\left(x,\omega\right)=e^{-2\pi ix\omega}V_{\hat{g}}\hat{f}\left(\omega,-x\right).
\end{equation}

In the following sections we will thoroughly work with invertible
matrices, namely elements of the group 
\[
\GLL=\left\{ M\in\mathbb{R}^{2d\times2d}\,|\,\det M\ne0\right\} .
\] 
We employ the following symbol to denote the transpose of an inverse matrix:

\[
M^{\#}\equiv (M^{-1})^{\top} = (M^{\top})^{-1}, \qquad M\in \GLL.
\]

Let $J$ denote the canonical symplectic matrix in $\mathbb{R}^{2d}$, namely
\[
J=\left(\begin{array}{cc}
0_{d} & I_{d}\\
-I_{d} & 0_{d}
\end{array}\right)\in\mathrm{Sp}\left(2d,\mathbb{R}\right),
\]
where  the
symplectic group $\mathrm{Sp}\left(2d,\mathbb{R}\right)$ is defined
by
$$
\Spnr=\left\{M\in\Gltwonr:\;M^{\top}JM=J\right\}.
$$
Observe that, for $z=\left(z_{1},z_{2}\right)\in\mathbb{R}^{2d}$, we have
$Jz=J\left(z_{1},z_{2}\right)=\left(z_{2},-z_{1}\right),$  $J^{-1}z=J^{-1}\left(z_{1},z_{2}\right)=\left(-z_{2},z_{1}\right)=-Jz,$ and 
$J^{2}=-I_{2d\times2d}.$

The symplectic Fourier transform $\cF_{\sigma}$ of a function $F$ on the phase space $\rdd$ is defined as 
\begin{equation}\label{SyFT}
\cF_{\sigma}F(x,\omega)= \cF F (J(x,\omega))= \cF F (\omega,-x).
\end{equation} 
Remark that this is an involution, i.e., $\cF_{\sigma}(\cF_{\sigma}F)=F$.

Recall that the tensor product of two functions $f,g:\mathbb{R}^{d}\rightarrow\mathbb{C}$
is defined as 
\[
f\otimes g:\rdd\rightarrow\mathbb{C}\,:\,\left(x,y\right)\mapsto f\otimes g\left(x,y\right)= f\left(x\right)g\left(y\right).
\]
It is easy to prove that the tensor product  $\otimes$ is a bilinear mapping from $L^{2}\left(\mathbb{R}^{d}\right)\times L^{2}\left(\mathbb{R}^{d}\right)$ into $L^{2}\left(\rdd\right)$. Furthermore, it maps
$\mathcal{S}\left(\mathbb{R}^{d}\right)\times\mathcal{S}\left(\mathbb{R}^{d}\right)$ into $\mathcal{S}\left(\rdd\right)$. The tensor product of two temperate distributions is also well defined by the following construction:  $f,g\in\mathcal{S}'\left(\mathbb{R}^{d}\right)$ is the distribution
$f\otimes g\in\mathcal{S}'\left(\rdd\right)$ acting on
any $\Phi\in\mathcal{S}\left(\rdd\right)$ as 
\[
\left\langle f\otimes g,\Phi\right\rangle =\left\langle f,\left\langle g,\Phi_{x}\right\rangle \right\rangle ,
\]
meaning that $g$ acts on the section $\Phi_{x}\left(y\right)$ and
then $f$ acts on $\left\langle g,\Phi_{x}\right\rangle \in\mathcal{S}\left(\mathbb{R}_{x}^{d}\right)$.
In particular, it is the unique distribution such that
\[
\left\langle f\otimes g,\phi_{1}\otimes\phi_{2}\right\rangle \equiv\left\langle f,\phi_{1}\right\rangle \left\langle g,\phi_{2}\right\rangle, \quad \forall \phi_{1},\phi_{2}\in\mathcal{S}\left(\mathbb{R}^{d}\right).
\]

In conclusion, recall that the complex conjugate $\overline{f}\in\mathcal{S}'\left(\mathbb{R}^{d}\right)$
of a temperate distribution $f\in\mathcal{S}'\left(\mathbb{R}^{d}\right)$
is defined by
\[
\left\langle \overline{f},\phi\right\rangle =\overline{\left\langle f,\overline{\phi}\right\rangle },\qquad\phi\in\mathcal{S}\left(\mathbb{R}^{d}\right).
\]

\subsection{Function spaces} 
Recall that $C_{0}(\rd)$ denotes the class of continuous functions on $\rd$ vanishing at infinity.

We say that a non-negative continuous function on $v:\mathbb{R}^{2d}\rightarrow (0,+\infty)$ is a \emph{weight function} if the following properties are satisfied:
 $v\left(0\right)=1$,  $v$ is even in each coordinate: $v\left(\pm z_{1},\ldots,\pm z_{2d}\right)=v\left(z_{1},\ldots,z_{2d}\right)$ and $v$ is submultiplicative: $v\left(w+z\right)\le v\left(w\right)v\left(z\right)$, for any $w,z\in\mathbb{R}^{2d}$.
Weights of particular relevance are those of polynomial type, namely
\begin{equation}\label{vs}
v_{s}\left(z\right)=\left\langle z\right\rangle ^{s}=\left(1+\left|z\right|^{2}\right)^{\frac{s}{2}},\qquad z\in\mathbb{R}^{2d},\,s\ge0.
\end{equation}
Notice that, for $s\geq 0$, the weight function  $v_s$ is equivalent to the submultiplicative weight $(1+|\cdot|)^s$, that is, there exist $C_1, C_2>0$ such that 
$$C_1 v_s(z)\leq (1+|z|)^s\leq C_2 v_s(z),\quad z\in\rdd.$$
A weight function $m$ on $\Renn$ is called  {\it
	$v$-moderate} if $ m(z_1+z_2)\leq Cv(z_1)m(z_2)$  for all $z_1,z_2\in\Renn.$ We write $\mathcal{M}_{v}$ to denote class of $v$-moderate weights.

\noindent
\textbf{Modulation spaces.}
Given a non-zero window $g\in\sch(\Ren)$, a $v$-moderate weight
function $m$ on $\Renn$ and $1\leq p,q\leq
\infty$, the {\it
	modulation space} $M^{p,q}_m(\Ren)$ consists of all tempered
distributions $f\in\sch'(\Ren)$ such that $V_gf\in L^{p,q}_m(\Renn )$
(weighted mixed-norm space). The norm on $M^{p,q}_m$ is
$$
\|f\|_{M^{p,q}_m}=\|V_gf\|_{L^{p,q}_m}=\left(\int_{\Ren}
\left(\int_{\Ren}|V_gf(x,\o)|^pm(x,\o)^p\,
dx\right)^{q/p}d\o\right)^{1/q}  \, .
$$
If $p=q$, we write $M^p_m$ instead of $M^{p,p}_m$, and if $m(z)\equiv 1$ on $\Renn$, then we write $M^{p,q}$ and $M^p$ for $M^{p,q}_m$ and $M^{p,p}_m$. In particular, $M^2=L^2$.

Then  $\Mmpq (\Ren )$ is a Banach space
whose definition is independent of the choice of the window $g$. Moreover, we recall that the class of admissible windows can be extended to $M^1_v$ (cf. \cite[Thm.~11.3.7]{Grochenig_2006_Time}). 

For $m\in \mathcal{M}_v$, modulation spaces enjoy the following inclusion properties: 
\begin{equation*}
\mathcal{S}(\mathbb{R}^{d})\subseteq M^{p_{1},q_{1}}_m(\mathbb{R}%
^{d})\subseteq M^{p_{2},q_{2}}_m(\mathbb{R}^{d})\subseteq \mathcal{S}^{\prime
}(\mathbb{R}^{d}),\quad p_{1}\leq p_{2},\,\,q_{1}\leq q_{2}.
\label{modspaceincl1}
\end{equation*}%

Note the connection $M^1=S_0$, the Feichtinger algebra, with dual space $M^\infty=S_0'$. Hence, properties stated for unweighted modulation spaces can be equally formulated by considering  the Banach Gelfand triple ($S_0$,$L^2$,$S_0'$) in place of  the standard Schwartz triple $(\cS',L^2,\cS')$, cf. \cite{CFL}. 

\noindent
\textbf{Wiener amalgam spaces.} Fix $g\in \cS(\rd) \setminus \left\{ 0 \right\} $. Given weight functions $u,w$ on $\rd$, the Wiener amalgam space $W(\Fur L^p_u,L^q_w)(\rd)$ can be concretely designed as the space of distributions $f\in\cS'(\rd)$ such that
\[
\|f\|_{W(\Fur L^p_u,L^q_w)(\rd)}:=\left(\int_{\Ren}
\left(\int_{\Ren}|V_gf(x,\o)|^p u^p(\o)\,
d\o\right)^{q/p} w^q(x)d x\right)^{1/q}<\infty  \,
\]
(obvious modifications for $p=\infty$ or $q=\infty$).
Using the fundamental identity of \tfa\, \eqref{FI}, we can write $|V_g f(x,\o)|=|V_{\hat g} \hat f(\o,-x)| = |\mathcal F (\hat f \, T_\o \overline{\hat g}) (-x)|$  and (recall $u(x)=u(-x)$)
$$
\| f \|_{{M}^{p,q}_{u\otimes w}} = \left( \int_{\rd} \| \hat f \ T_{\o} \overline{\hat g} \|_{\cF L^p_u}^q w^q(\o) \ d \o \right)^{1/q}
= \| \hat f \|_{W(\cF L_u^p,L_w^q)}.
$$
Hence the Wiener amalgam spaces under our consideration are simply the image under the \ft\, of modulation spaces
\begin{equation}\label{W-M}
\cF ({M}^{p,q}_{u\otimes w})=W(\cF L_u^p,L_w^q).
\end{equation}

This should not come as a surprise, since it is exactly how modulation spaces have been originally introduced by Feichtinger, i.e., as special Wiener amalgams on the Fourier transform side, cf. \cite{Feich2006} and the references therein for details.

From now on we tacitly assume the  results formulated for $L^2$-functions hold with equality
almost everywhere.

\subsection{Bilinear coordinate transformations}
Let us now define the bilinear coordinate transformation we are going to use in the sequel.
\begin{definition}
The bilinear coordinate transformation $\mathfrak{T}_{M}$, associated with a matrix $M\in\mathbb{R}^{2d\times2d}$
is defined as 
\[
\mathfrak{T}_{M}F\left(x,y\right)= F\left(M\left(\begin{array}{c}
x\\
y
\end{array}\right)\right),\qquad x,y\in\mathbb{R}^{d},
\]
where  $F$ is a function $F:\rdd\rightarrow\mathbb{C}$.
In particular, if $M=\left(\begin{array}{cc}
M_{11} & M_{12}\\
M_{21} & M_{22}
\end{array}\right)$ with $M_{ij}\in\mathbb{R}^{d\times d}$, $i,j=1,2$, we write 
\[
\mathfrak{T}_{M}F\left(x,y\right)=F\left(M_{11}x+M_{12}y,M_{21}x+M_{22}y\right).
\]
\end{definition}
The composition of two such coordinate transformations associated
with $M,N\in\mathbb{R}^{2d\times2d}$ yields $\mathfrak{T}_{M}\mathfrak{T}_{N}=\mathfrak{T}_{MN}$.
If the invertibility of $M$ is assumed, it is easy to prove the following
result. 
\begin{lemma}
\label{coord trans isom}
\begin{enumerate}[label=(\roman*)]
\item If $M\in\mathrm{GL}\left(2d,\mathbb{R}\right)$, the transformation
$\mathfrak{T}_{M}$ is a topological isomorphism on $L^{2}\left(\rdd\right)$
with inverse $\mathfrak{T}_{M}^{-1}=\mathfrak{T}_{M^{-1}}$ and adjoint
$\mathfrak{T}_{M}^{*}=\left|\det M\right|^{-1}\mathfrak{T}_{M^{-1}}$. 
\item If $M\in\mathrm{GL}\left(2d,\mathbb{R}\right)$, the transformation
$\mathfrak{T}_{M}$ is a topological isomorphism on $\mathcal{S}\left(\rdd\right)$,
hence uniquely extends to an isomorphism on $\mathcal{S}'\left(\rdd\right)$. 
\end{enumerate}
\end{lemma}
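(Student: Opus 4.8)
The plan is to derive both parts from a single linear change of variables together with the composition rule $\mathfrak{T}_{M}\mathfrak{T}_{N}=\mathfrak{T}_{MN}$ stated just above. Starting with part (i), I would first establish boundedness on $L^{2}(\rdd)$ by substituting $u=Mw$ (writing $w=(x,y)$); since $M\in\GLL$ this substitution has Jacobian $|\det M|$, so
\[
\|\mathfrak{T}_{M}F\|_{L^{2}}^{2}=\int_{\rdd}|F(Mw)|^{2}\,dw=|\det M|^{-1}\int_{\rdd}|F(u)|^{2}\,du=|\det M|^{-1}\|F\|_{L^{2}}^{2},
\]
which shows $\mathfrak{T}_{M}$ is bounded, in fact equal to $|\det M|^{-1/2}$ times a unitary map. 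The composition rule immediately gives $\mathfrak{T}_{M}\mathfrak{T}_{M^{-1}}=\mathfrak{T}_{M^{-1}}\mathfrak{T}_{M}=\mathfrak{T}_{I}=\mathrm{Id}$, so $\mathfrak{T}_{M}$ is a topological isomorphism with $\mathfrak{T}_{M}^{-1}=\mathfrak{T}_{M^{-1}}$. For the adjoint I would run the same substitution inside the pairing,
\[
\langle\mathfrak{T}_{M}F,G\rangle=\int_{\rdd}F(Mw)\overline{G(w)}\,dw=|\det M|^{-1}\int_{\rdd}F(u)\,\overline{G(M^{-1}u)}\,du,
\]
recognizing the right-hand side as $\langle F,|\det M|^{-1}\mathfrak{T}_{M^{-1}}G\rangle$ and hence concluding $\mathfrak{T}_{M}^{*}=|\det M|^{-1}\mathfrak{T}_{M^{-1}}$.

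For part (ii), I would check continuity of $\mathfrak{T}_{M}$ on $\mathcal{S}(\rdd)$ by bounding Schwartz seminorms. Smoothness of $\mathfrak{T}_{M}F=F\circ M$ is clear, and the chain rule expresses each derivative $\partial^{\alpha}(F\circ M)$ as a finite linear combination, with constant coefficients built from the entries of $M$, of terms $(\partial^{\beta}F)(M\,\cdot\,)$ with $|\beta|\le|\alpha|$. Combining this with the two-sided comparison $c|w|\le|Mw|\le C|w|$ valid for invertible $M$, the polynomial weights are comparable, so each seminorm of $\mathfrak{T}_{M}F$ is dominated by finitely many seminorms of $F$. Thus $\mathfrak{T}_{M}\colon\mathcal{S}\to\mathcal{S}$ is continuous; applying the same reasoning to $M^{-1}$ and invoking the composition rule shows $\mathfrak{T}_{M}$ is a topological isomorphism of $\mathcal{S}(\rdd)$.

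The extension to $\mathcal{S}'(\rdd)$ then follows by duality: for $u\in\mathcal{S}'(\rdd)$ I would set $\langle\mathfrak{T}_{M}u,\phi\rangle:=|\det M|^{-1}\langle u,\mathfrak{T}_{M^{-1}}\phi\rangle$ for $\phi\in\mathcal{S}(\rdd)$, which is well defined and continuous precisely because $\mathfrak{T}_{M^{-1}}$ is continuous on $\mathcal{S}$ by the previous step, and which agrees with the $L^{2}$ action in view of the adjoint formula of part (i). The composition rule once more guarantees bijectivity on $\mathcal{S}'(\rdd)$.

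I do not expect any deep difficulty: the whole statement reduces to the linear change of variables and the chain rule. The only point requiring genuine care is keeping the Jacobian factor $|\det M|^{-1}$ consistent across the $L^{2}$-norm, the adjoint, and the $\mathcal{S}'$-pairing, so that the extension by $L^{2}$-boundedness and the extension by duality agree, and remembering that the inner product is conjugate-linear in its second slot when identifying the adjoint.
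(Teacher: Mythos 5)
Your proof is correct and complete; the paper itself gives no proof of this lemma, remarking only that ``it is easy to prove,'' and your change-of-variables argument — the Jacobian computation for the $L^{2}$ bound and the adjoint, the chain-rule/seminorm estimate with the two-sided comparison $c|w|\le|Mw|\le C|w|$ on $\mathcal{S}$, and the duality extension checked for consistency against the $L^{2}$ adjoint — is exactly the standard argument being alluded to. (One pedantic aside: with the paper's definition one actually has $\mathfrak{T}_{M}\mathfrak{T}_{N}=\mathfrak{T}_{NM}$ rather than $\mathfrak{T}_{MN}$, but since you only invoke the composition rule with $N=M^{-1}$, which commutes with $M$, nothing in your argument is affected.)
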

Two coordinate transformations deserve special notation: one is given
by the \emph{flip} operator, denoted as follows: for any $F\in L^2\left(\rdd\right)$,
\[
\tilde{F}\left(x,y\right)\equiv\mathfrak{T}_{\tilde{I}}F\left(x,y\right)=F\left(y,x\right),\qquad\tilde{I}=\left(\begin{array}{cc}
0_{d} & I_{d}\\
I_{d} & 0_{d}
\end{array}\right)\in\GLL,
\]
while the other one is the \emph{reflection} operator: 
\[
\mathcal{I}F\left(x,y\right)\equiv\mathfrak{T}_{-I}F\left(x,y\right)=F\left(-x,-y\right).
\]
Sometimes we will also write $\mathcal{I}=-I\in\GLL$,
in line with a common harmless practice. 

The following commutation relations between coordinate transformations
and  time-frequency shifts are easily derived.
\begin{lemma}
\label{tfs coortr}Let $A\in\GLL$.
For any $f\in L^{2}\left(\mathbb{R}^{d}\right)$, $x,\omega\in\mathbb{R}^{d}$:
\[
\mathfrak{T}_{A}T_{x}f=T_{A^{-1}x}\mathfrak{T}_{A}f,\qquad\mathfrak{T}_{A}M_{\omega}f=M_{A^{\top}\omega}\mathfrak{T}_{A}f,
\]
hence
\[
\mathfrak{T}_{A}\pi\left(x,\omega\right)f=\pi\left(A^{-1}x,A^{\top}\omega\right)\mathfrak{T}_{A}f.
\]
\end{lemma}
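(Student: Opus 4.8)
The plan is to verify the two elementary commutation relations directly from the definitions, evaluating each operator at an arbitrary point of $\rdd$, and then to read off the time-frequency shift identity by composing them. Throughout I treat $T_x$, $M_\omega$ and $\pi(x,\omega)$ as the operators acting on functions over $\rdd$, consistently with $\mathfrak{T}_A$ for $A\in\GLL$.

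First I would handle the translation relation. Fixing $z\in\rdd$ and unwinding the definitions gives $\mathfrak{T}_A(T_x f)(z)=(T_x f)(Az)=f(Az-x)$. The decisive algebraic step is to factor $Az-x=A(z-A^{-1}x)$, so that $f(Az-x)=(\mathfrak{T}_A f)(z-A^{-1}x)=T_{A^{-1}x}(\mathfrak{T}_A f)(z)$. Since $z$ is arbitrary this yields $\mathfrak{T}_A T_x f=T_{A^{-1}x}\mathfrak{T}_A f$.

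Next I would treat the modulation relation in the same spirit: $\mathfrak{T}_A(M_\omega f)(z)=(M_\omega f)(Az)=e^{2\pi i (Az)\cdot\omega}f(Az)$. Here the only point requiring attention is the rearrangement of the character through the adjoint of $A$, namely $(Az)\cdot\omega=z\cdot(A^\top\omega)$, whence $\mathfrak{T}_A(M_\omega f)(z)=e^{2\pi i z\cdot(A^\top\omega)}(\mathfrak{T}_A f)(z)=M_{A^\top\omega}(\mathfrak{T}_A f)(z)$, i.e. $\mathfrak{T}_A M_\omega f=M_{A^\top\omega}\mathfrak{T}_A f$. Finally, writing $\pi(x,\omega)=M_\omega T_x$ and applying the two relations in succession gives $\mathfrak{T}_A\pi(x,\omega)f=M_{A^\top\omega}\mathfrak{T}_A T_x f=M_{A^\top\omega}T_{A^{-1}x}\mathfrak{T}_A f=\pi(A^{-1}x,A^\top\omega)\mathfrak{T}_A f$, which is the asserted identity.

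I expect no genuine obstacle: the statement is a routine change-of-variables computation and its whole content is bookkeeping. The one thing to keep straight is the asymmetry between the two factors — the inverse $A^{-1}$ appears for the translation (the shift is pulled inside the argument $Az$), whereas the transpose $A^\top$ appears for the modulation (it comes from the inner product defining the character). If one prefers to run the argument on a dense class first, the computations hold verbatim for Schwartz $f$ and extend to all of $L^2(\rdd)$ because $\mathfrak{T}_A$ is a topological isomorphism there by Lemma \ref{coord trans isom} and $T_x,M_\omega$ are isometries; but as identities of $L^2$-functions are read almost everywhere, the pointwise calculation already suffices.
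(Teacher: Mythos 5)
Your proof is correct: the pointwise computations $f(Az-x)=f(A(z-A^{-1}x))$ and $(Az)\cdot\omega=z\cdot(A^{\top}\omega)$ are exactly the intended verification, and composing the two relations via $\pi(x,\omega)=M_{\omega}T_{x}$ gives the final identity. The paper states this lemma without proof as ``easily derived,'' and your argument (including the correct reading that the operators act on functions over $\mathbb{R}^{2d}$ and the remark that a.e.\ identities suffice since $A$ is invertible) is precisely the routine derivation the authors had in mind.
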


\subsection{Partial Fourier transforms}
In the sequel we shall work with partial Fourier transforms. Let us recall their definition and main properties.

\begin{definition}
Given $F\in L^{2}\left(\mathbb{R}_{\left(x,y\right)}^{2d}\right)$,
the symbols $\mathcal{F}_{1}$ and $\mathcal{F}_{2}$ denote the partial
Fourier transforms defined as follows:
$$
\mathcal{F}_{1}F\left(\xi,y\right)=\widehat{F_{y}}\left(\xi\right)=\int_{\mathbb{R}^{d}}e^{-2\pi i\xi t}F\left(t,y\right)dt,\quad 
\mathcal{F}_{2}F\left(x,\omega\right)=\widehat{F_{x}}\left(\omega\right)=\int_{\mathbb{R}^{d}}e^{-2\pi i\omega t}F\left(x,t\right)dt,
$$
where $\widehat{\cdot}$ denotes the Fourier transform on $L^{2}\left(\mathbb{R}^{d}\right)$
whereas
\[
F_{x}\left(y\right)=F\left(x,y\right),\qquad F_{y}\left(x\right)=F\left(x,y\right)
\]
are the sections of $F$ at fixed $x\in\mathbb{R}^{d}$ and $y\in\mathbb{R}^{d}$
respectively. Without further assumptions, the integral representations given above are to be intended
in a formal sense.
\end{definition}
Fubini's theorem assures that $F_{x}\in L^{2}\left(\mathbb{R}_{y}^{d}\right)$
for a.e. $x\in\mathbb{R}^{d}$ and $F_{y}\in L^{2}\left(\mathbb{R}_{x}^{d}\right)$
for a.e. $y\in\mathbb{R}^{d}$, thus $\mathcal{F}_{1}F$ and $\mathcal{F}_{2}F$
are indeed well defined. The Fourier transform $\mathcal{F}$ of $F\left(x,y\right)$
is therefore related to the partial Fourier transforms as 
\[
\mathcal{F}=\mathcal{F}_{1}\mathcal{F}_{2}=\mathcal{F}_{2}\mathcal{F}_{1}.
\]

We state the following result only for $\mathcal{F}_{2}$, since it
is the transform of our interest hereinafter. Similar claims for $\mathcal{F}_{1}$
can be proved following the same pattern with suitable modifications.
The proof is a matter of computation.

\begin{lemma}~\label{partial Fou isom}
(i) The partial Fourier transform $\mathcal{F}_{2}$ is an isometric (hence
topological) isomorphism on $L^{2}\left(\rdd\right)$.
In particular, 
\[
\mathcal{F}_{2}^{*}F\left(x,y\right)=\mathcal{F}_{2}^{-1}F\left(x,y\right)=\mathcal{F}_{2}F\left(x,-y\right)=\mathfrak{T}_{\mathcal{I}_{2}}\mathcal{F}_{2}F\left(x,y\right),
\]
where $\mathcal{I}_{2}=\left(\begin{array}{cc}
I & 0\\
0 & -I
\end{array}\right)$.\\
(ii) The partial Fourier transform $\mathcal{F}_{2}$ is a topological
isomorphism on $\mathcal{S}\left(\rdd\right)$, hence it
uniquely extends to an isomorphism on $\mathcal{S}'\left(\rdd\right)$. 
\end{lemma}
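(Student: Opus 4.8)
The plan is to reduce every assertion to the ordinary ($d$-dimensional) Plancherel theorem applied in the second group of variables, exploiting the Hilbert tensor decomposition $\lrdd\cong\lrd\otimes\lrd$.

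First I would prove that $\mathcal{F}_{2}$ is an isometry on $\lrdd$. Given $F\in\lrdd$, Fubini's theorem (as already recorded before the statement) ensures that the section $F_{x}=F(x,\cdot)$ belongs to $\lrd$ for a.e.\ $x\in\rd$. Applying Plancherel in the variable $y$ for each such $x$ gives $\|\mathcal{F}_{2}F(x,\cdot)\|_{\lrd}=\|\widehat{F_{x}}\|_{\lrd}=\|F_{x}\|_{\lrd}$; squaring, integrating in $dx$, and invoking Fubini once more yields
\[
\|\mathcal{F}_{2}F\|_{\lrdd}^{2}=\int_{\rd}\|\widehat{F_{x}}\|_{\lrd}^{2}\,dx=\int_{\rd}\|F_{x}\|_{\lrd}^{2}\,dx=\|F\|_{\lrdd}^{2}.
\]
For surjectivity (hence unitarity) I would note that $\mathcal{F}_{2}$ acts on simple tensors by $\mathcal{F}_{2}(f\otimes g)=f\otimes\widehat{g}$, so it coincides on the total set of finite sums of such tensors with the operator $\mathrm{Id}\otimes\mathcal{F}$, which is unitary because $\mathcal{F}$ is unitary on $\lrd$; the isometric extension is therefore onto.

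Next I would verify the inverse and adjoint formulas. Since the inverse Fourier transform on $\lrd$ satisfies $\mathcal{F}^{-1}h(y)=\widehat{h}(-y)$, applying this slot-wise gives immediately
\[
\mathcal{F}_{2}^{-1}F(x,y)=\mathcal{F}_{2}F(x,-y)=\mathfrak{T}_{\mathcal{I}_{2}}\mathcal{F}_{2}F(x,y),
\]
the last equality being just the definition of the coordinate transformation $\mathfrak{T}_{\mathcal{I}_{2}}$. As $\mathcal{F}_{2}$ is unitary by the previous step, its adjoint equals its inverse, which supplies the remaining identity $\mathcal{F}_{2}^{*}=\mathcal{F}_{2}^{-1}$.

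For part (ii) I would check that $\mathcal{F}_{2}$ preserves the Schwartz class continuously. The usual intertwining rules for the Fourier transform hold in the second slot alone: under $\mathcal{F}_{2}$, differentiation and multiplication by polynomials in $y$ are exchanged with the corresponding operations in $\omega$, while the $x$-variable is inert and commutes through. Hence every Schwartz seminorm of $\mathcal{F}_{2}F$ is dominated by finitely many seminorms of $F$, so $\mathcal{F}_{2}\colon\mathcal{S}(\rdd)\to\mathcal{S}(\rdd)$ is continuous; applying the same to $\mathcal{F}_{2}^{-1}=\mathfrak{T}_{\mathcal{I}_{2}}\mathcal{F}_{2}$, a composition of continuous maps by Lemma \ref{coord trans isom}, shows it is a topological isomorphism. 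By transposition it extends to a topological isomorphism on $\mathcal{S}'(\rdd)$, compatibly with the $L^{2}$-action through the isometry just proved. None of these steps is genuinely difficult, consistent with the remark in the excerpt that the proof is a matter of computation; the only points demanding care are the repeated appeals to Fubini's theorem to move between the sectionwise Plancherel identities and the global $L^{2}$-norm, and the routine bookkeeping of seminorm estimates in part (ii), both of which the tensor-product viewpoint $\mathcal{F}_{2}=\mathrm{Id}\otimes\mathcal{F}$ disposes of by delegating them to the one-variable theory.
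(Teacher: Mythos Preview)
Your proposal is correct and is precisely the kind of routine verification the paper has in mind: the paper does not actually give a proof of this lemma, remarking only that ``the proof is a matter of computation.'' Your sectionwise Plancherel argument via Fubini, the tensor identification $\mathcal{F}_{2}=\mathrm{Id}\otimes\mathcal{F}$ for surjectivity, and the standard seminorm bookkeeping for the Schwartz part are exactly the computations being alluded to.
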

Interactions among partial Fourier transforms and coordinate transformations
or time-frequency shifts are derived in the following lemmas. 
\begin{lemma}
~\label{partial Fou ctran}Let $A\in\GLL$
and $F\in L^{2}\left(\rdd\right)$. Then\\
(i) $\mathcal{F}_{1}F\left(\xi,\omega\right)=\mathcal{F}_{2}\tilde{F}\left(\omega,\xi\right)=\widetilde{\mathcal{F}_{2}\tilde{F}}\left(\xi,\omega\right)$.\\
(ii) $\mathcal{F}_{1}\mathfrak{T}_{A}F\left(\xi,y\right)=\mathcal{F}_{2}\mathfrak{T}_{B}F\left(y,\xi\right)=\widetilde{\mathcal{F}_{2}\mathfrak{T}_{B}F}\left(\xi,y\right)$,
where 
\[
B=A\tilde{I}=\left(\begin{array}{cc}
A_{12} & A_{11}\\
A_{22} & A_{21}
\end{array}\right).
\]
\end{lemma}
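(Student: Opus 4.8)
The plan is to prove (i) directly from the definitions and then bootstrap (ii) out of (i) via the flip operator. For (i), I would first restrict to $F\in\mathcal{S}(\rdd)$, where Fubini's theorem legitimizes every manipulation, and merely unravel the definitions: writing $\mathcal{F}_{2}\tilde{F}(x,\omega)=\int_{\rd}e^{-2\pi i\omega t}F(t,x)\,dt$ and evaluating at $(x,\omega)=(\omega,\xi)$ reproduces precisely $\mathcal{F}_{1}F(\xi,\omega)=\int_{\rd}e^{-2\pi i\xi t}F(t,\omega)\,dt$. Thus the first equality in (i) is nothing more than a relabeling of the integration variables combined with the definition of the flip $\tilde{F}(x,y)=F(y,x)$, while the second equality $\widetilde{\mathcal{F}_{2}\tilde{F}}(\xi,\omega)=\mathcal{F}_{2}\tilde{F}(\omega,\xi)$ is immediate from applying the flip to the function $\mathcal{F}_{2}\tilde{F}$. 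Once the identity is established on the dense subspace $\mathcal{S}(\rdd)$, I would extend it to every $F\in L^{2}(\rdd)$ by continuity, invoking Lemma \ref{partial Fou isom} and Lemma \ref{coord trans isom} to the effect that $\mathcal{F}_{1}$, $\mathcal{F}_{2}$ and the flip $\mathfrak{T}_{\tilde{I}}$ are all bounded (indeed isometric) isomorphisms of $L^{2}(\rdd)$, so that both sides depend continuously on $F$ and the a.e.\ equalities pass to the limit.

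For (ii) the idea is to apply (i) not to $F$ but to $G=\mathfrak{T}_{A}F$. This yields at once $\mathcal{F}_{1}\mathfrak{T}_{A}F(\xi,y)=\mathcal{F}_{2}\widetilde{\mathfrak{T}_{A}F}(y,\xi)=\widetilde{\mathcal{F}_{2}\widetilde{\mathfrak{T}_{A}F}}(\xi,y)$, so the whole statement reduces to identifying the flipped coordinate transformation $\widetilde{\mathfrak{T}_{A}F}$ with $\mathfrak{T}_{B}F$. The key computation is that flipping the output of $\mathfrak{T}_{A}$ interchanges the roles of the two $\rd$-variables: since $\widetilde{\mathfrak{T}_{A}F}(x,y)=\mathfrak{T}_{A}F(y,x)=F(A_{11}y+A_{12}x,\,A_{21}y+A_{22}x)$, this equals $\mathfrak{T}_{B}F(x,y)$ with $B$ obtained from $A$ by swapping its two block columns, i.e.\ $B=A\tilde{I}=\left(\begin{smallmatrix}A_{12}&A_{11}\\A_{22}&A_{21}\end{smallmatrix}\right)$. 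Substituting this identification into the three-term chain above gives exactly the claimed equalities, once more first on $\mathcal{S}(\rdd)$ and then on $L^{2}(\rdd)$ by the same density argument (note $\mathfrak{T}_{B}$ is again an $L^2$-isomorphism, $B$ being invertible).

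I expect the only delicate point to be bookkeeping rather than analysis: one must keep the order of matrix multiplication straight when identifying $\widetilde{\mathfrak{T}_{A}F}=\mathfrak{T}_{A\tilde{I}}F$, since flipping corresponds to acting with $\tilde{I}$ on the \emph{right} of $A$ (a column swap) and not on the left (a row swap). For this reason I would prefer to verify $B=A\tilde{I}$ by the explicit pointwise computation displayed above rather than by formally composing operators, so as to sidestep any ambiguity in the convention for $\mathfrak{T}_{M}\mathfrak{T}_{N}$. Everything else is a routine change of variables together with the $L^{2}$-continuity of the operators involved.
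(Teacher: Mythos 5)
Your proof is correct and takes exactly the route the paper intends: Lemma \ref{partial Fou ctran} is stated there without proof as a routine computation, and your direct unraveling of the definitions on $\mathcal{S}(\rdd)$ followed by extension to $L^{2}(\rdd)$ by density (legitimate since $\mathcal{F}_{1}$, $\mathcal{F}_{2}$, $\mathfrak{T}_{\tilde{I}}$ and $\mathfrak{T}_{B}$ are bounded isomorphisms of $L^{2}(\rdd)$ by Lemmas \ref{coord trans isom} and \ref{partial Fou isom}) is precisely that computation. Your caution about verifying $B=A\tilde{I}$ pointwise rather than by formally composing operators is in fact well judged, because the composition rule as printed in the paper, $\mathfrak{T}_{M}\mathfrak{T}_{N}=\mathfrak{T}_{MN}$, has the factors in the reversed order (one checks $\mathfrak{T}_{M}\mathfrak{T}_{N}F(z)=F(NMz)$, i.e.\ $\mathfrak{T}_{M}\mathfrak{T}_{N}=\mathfrak{T}_{NM}$), so a blind appeal to it would have yielded the wrong matrix $\tilde{I}A$, whereas your explicit identification $\widetilde{\mathfrak{T}_{A}F}(x,y)=F(A_{11}y+A_{12}x,\,A_{21}y+A_{22}x)=\mathfrak{T}_{A\tilde{I}}F(x,y)$ gives the correct column-swapped $B$.
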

\begin{lemma}
\label{tfs partial Fou}

For any $F\in L^{2}\left(\rdd\right)$, $\left(r,s\right),\left(\rho,\sigma\right)\in\rdd$, we have
\[
\mathcal{F}_{2}T_{\left(r,s\right)}F\left(x,\omega\right)=M_{\left(0,-s\right)}T_{\left(r,0\right)}\mathcal{F}_{2}F\left(x,\omega\right)=e^{-2\pi i\omega s}\mathcal{F}_{2}F\left(x-r,\omega\right),
\]
\[
\mathcal{F}_{2}M_{\left(\rho,\sigma\right)}F\left(x,\omega\right)=M_{\left(\rho,0\right)}T_{\left(0,\sigma\right)}\mathcal{F}_{2}F\left(x,\omega\right)=e^{2\pi ix\rho}\mathcal{F}_{2}F\left(x,\omega-\sigma\right).
\]
Hence
\[
\mathcal{F}_{2}\left(M_{\left(\rho,\sigma\right)}T_{\left(r,s\right)}F\right)\left(x,\omega\right)=e^{2\pi i\sigma s}M_{\left(\rho,-s\right)}T_{\left(r,\sigma\right)}\mathcal{F}_{2}F\left(x,\omega\right).
\]
\end{lemma}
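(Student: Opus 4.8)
The plan is to establish the two elementary identities — one for translation, one for modulation — by direct computation on the Schwartz class, and then to obtain the composition formula by chaining them while carefully tracking the resulting phase factor. Since $\mathcal{F}_{2}$, $T_{(r,s)}$, and $M_{(\rho,\sigma)}$ are all continuous (indeed isometric) on $L^{2}\left(\rdd\right)$ by Lemma \ref{partial Fou isom}, it suffices to verify each identity on the dense subspace $\mathcal{S}\left(\rdd\right)$, where the defining integral for $\mathcal{F}_{2}$ converges absolutely, and then extend to arbitrary $F\in L^{2}\left(\rdd\right)$ by continuity.

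For the translation identity I would write $\mathcal{F}_{2}T_{(r,s)}F(x,\omega)=\int_{\rd}e^{-2\pi i\omega t}F(x-r,t-s)\,dt$ and perform the substitution $u=t-s$. This pulls out the phase $e^{-2\pi i\omega s}$ and leaves $\mathcal{F}_{2}F(x-r,\omega)$, yielding the right-hand side $e^{-2\pi i\omega s}\mathcal{F}_{2}F(x-r,\omega)$; recognizing $e^{-2\pi i\omega s}$ as the action of $M_{(0,-s)}$ and the shift $x\mapsto x-r$ as $T_{(r,0)}$ then gives the stated operator form. The modulation identity is symmetric: writing $M_{(\rho,\sigma)}F(x,y)=e^{2\pi i(x\rho+y\sigma)}F(x,y)$, the factor $e^{2\pi ix\rho}$ is independent of the integration variable and factors out, while $e^{2\pi it\sigma}$ merges with the kernel $e^{-2\pi i\omega t}$ to shift the frequency from $\omega$ to $\omega-\sigma$; this produces $e^{2\pi ix\rho}\mathcal{F}_{2}F(x,\omega-\sigma)=M_{(\rho,0)}T_{(0,\sigma)}\mathcal{F}_{2}F(x,\omega)$.

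For the composition I would apply the modulation rule to $G=T_{(r,s)}F$ and then the translation rule to $\mathcal{F}_{2}(T_{(r,s)}F)$ evaluated at the already-shifted frequency $\omega-\sigma$. Everything here is routine, so there is no genuine obstacle; the one spot that invites error — and the only step worth flagging — is the phase bookkeeping. Because the frequency variable has been shifted by the modulation step first, the translation rule contributes $e^{-2\pi i(\omega-\sigma)s}$ rather than $e^{-2\pi i\omega s}$. Splitting $e^{-2\pi i(\omega-\sigma)s}=e^{2\pi i\sigma s}\,e^{-2\pi i\omega s}$ isolates the scalar cocycle $e^{2\pi i\sigma s}$, while the remaining factors $e^{2\pi ix\rho}e^{-2\pi i\omega s}$ together with the double shift $(x,\omega)\mapsto(x-r,\omega-\sigma)$ reassemble precisely into $M_{(\rho,-s)}T_{(r,\sigma)}\mathcal{F}_{2}F$, giving the claimed $e^{2\pi i\sigma s}M_{(\rho,-s)}T_{(r,\sigma)}\mathcal{F}_{2}F$. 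This leftover phase is exactly the manifestation of the non-commutativity of time-frequency shifts, and attributing it to the order of composition is what the computation must get right.
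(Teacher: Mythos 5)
Your proof is correct and matches the paper's intended argument: the lemma is stated there as a routine computation, and your direct evaluation of the $\mathcal{F}_{2}$ integral for each rule, followed by chaining them with the phase split $e^{-2\pi i(\omega-\sigma)s}=e^{2\pi i\sigma s}e^{-2\pi i\omega s}$, is exactly that computation, with all signs and the cocycle $e^{2\pi i\sigma s}$ handled correctly. The density scaffold is harmless but not even needed, since under the paper's almost-everywhere convention the identities follow directly by applying the standard shift/modulation rules for the Fourier transform on $\rd$ to the sections $F_{x}$.
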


\section{Distributions of Wigner type associated with invertible matrices}

We introduce here the main ingredients of this study. Our presentation is nearly identical to the one provided in \cite{bayer}, which is indeed richer than ours on general aspects. Anyway, we decided to develop here all the needed material in order to uniform the notation once for all and also to provide new results or shorter proofs whenever possible. 

\begin{definition}
\label{biltfr}Let $f,g\in L^{2}\left(\mathbb{R}^{d}\right)$
and $A=\left(\begin{array}{cc}
A_{11} & A_{12}\\
A_{21} & A_{22}
\end{array}\right)\in\GLL.$ The time-frequency distribution\textbf{ }of Wigner type for $f$
and $g$ associated with $A$ (in short: matrix-Wigner distribution,
MWD) is defined as 
\begin{equation}\label{BAi}
\mathcal{B}_{A}\left(f,g\right)\left(x,\omega\right)=\mathcal{F}_{2}\mathfrak{T}_{A}\left(f\otimes\overline{g}\right)\left(x,\omega\right),
\end{equation}
that is, formula \eqref{BAe}. When $g=f$, we simply write $\mathcal{B}_{A}f$ for $\mathcal{B}_{A}\left(f,f\right)$.
\end{definition}
This class of time-frequency representations includes some of the
most relevant distributions in time-frequency analysis, such as the the short-time Fourier transform:
\begin{equation}\label{ASTFT}
V_{g}f\left(x,\omega\right)=\int_{\rd} e^{-2\pi i\omega y}f\left(y\right)\overline{g\left(y-x\right)}dy=\mathcal{B}_{A_{ST}}\left(f,g\right)\left(x,\omega\right),\quad A_{ST}=\left(\begin{array}{cc}
0 & I\\
-I & I
\end{array}\right),
\end{equation}
and the $\tau$-Wigner distribution: for any $\tau\in\left[0,1\right]$,
\begin{equation}\label{tauwig}
W_{\tau}\left(f,g\right)\left(x,\omega\right)=\int_{\rd} e^{-2\pi i\omega y}f\left(x+\tau y\right)\overline{g\left(x-\left(1-\tau\right)y\right)}dy=\mathcal{B}_{A_{\tau}}\left(f,g\right)\left(x,\omega\right),
\end{equation}
where 
\begin{equation}\label{Atau}
A_{\tau}=\left(\begin{array}{cc}
I & \tau I\\
I & -\left(1-\tau\right)I
\end{array}\right).
\end{equation}
In particular, this parametric family of distributions includes 
\begin{itemize}
\item the Wigner(-Ville) distribution, corresponding to $\tau=1/2$:
\begin{equation}\label{wig}
W\left(f,g\right)\left(x,\omega\right)=\int_{\rd} e^{-2\pi i\omega y}f\left(x+\frac{y}{2}\right)\overline{g\left(x-\frac{y}{2}\right)}dy=\mathcal{B}_{A_{1/2}}\left(f,g\right)\left(x,\omega\right).
\end{equation}
\item the Rihaczek distribution, corresponding to $\tau=0$:
\begin{equation}\label{rihaczek}
R\left(f,g\right)\left(x,\omega\right)=\int_{\rd} e^{-2\pi i\omega y}f\left(x\right)\overline{g\left(x-y\right)}dy=e^{-2\pi ix\omega}f\left(x\right)\overline{\hat{g}\left(\omega\right)}=\mathcal{B}_{A_{0}}\left(f,g\right)\left(x,\omega\right).
\end{equation}
\item the conjugate-Rihaczek distribution, corresponding to $\tau=1$. 
\end{itemize}
Even the  cross-ambiguity distribution is  a MWD:
\begin{equation}\label{ambiguity}
Amb\left(f,g\right)\left(x,\omega\right)=\int_{\rd} e^{-2\pi i\omega y}f\left(y+\frac{x}{2}\right)\overline{g\left(y-\frac{x}{2}\right)}dy=\mathcal{B}_{A_{Amb}}\left(f,g\right)\left(x,\omega\right),
\end{equation}
where 
\[
A_{Amb}=\left(\begin{array}{cc}
\frac{1}{2}I & I\\
-\frac{1}{2}I & I
\end{array}\right).
\]
From Definition \ref{biltfr} and Lemmas \ref{coord trans isom}, and \ref{partial Fou isom}, we can immediately infer boundedness
properties of $\mathcal{B}_{A}\left(f,g\right)$ in the context of
the fundamental triple $\mathcal{S}\left(\mathbb{R}^{d}\right)\subset L^{2}\left(\mathbb{R}^{d}\right)\subset\mathcal{S}'\left(\mathbb{R}^{d}\right)$, as detailed below. 
\begin{proposition}
\label{def bilA triple} Assume $A\in\GLL$. Then,
\begin{enumerate}[label=(\roman*)]
\item If $f,g\in L^{2}(\mathbb{R}^{d})$, then $\mathcal{B}_{A}(f,g)\in L^{2}(\rdd)$
and the mapping $\mathcal{B}_{A}:L^{2}(\mathbb{R}^{d})\times L^{2}(\mathbb{R}^{d})\rightarrow L^{2}(\rdd)$
is continuous. Furthermore, $\mathrm{span}\left\{ \mathcal{B}_{A}(f,g)\,|\,f,g\in L^{2}(\mathbb{R}^{d})\right\} $
is a dense subset of $L^{2}(\rdd)$.
\item If $f,g\in\mathcal{S}(\mathbb{R}^{2d})$, then $\mathcal{B}_{A}(f,g)\in\mathcal{S}(\mathbb{R}^{d})$
and the mapping  $\mathcal{B}_{A}:\mathcal{S}(\mathbb{R}^{d})\times\mathcal{S}(\mathbb{R}^{d})\rightarrow\mathcal{S}(\rdd)$
is continuous. 
\item If $f,g\in\mathcal{S}'(\mathbb{R}^{d})$, then $\mathcal{\mathcal{B}}_{A}(f,g)\in\mathcal{S}'(\rdd)$
and the mapping  $\mathcal{B}_{A}:\mathcal{S}'(\mathbb{R}^{d})\times\mathcal{S}'(\mathbb{R}^{d})\rightarrow\mathcal{S}'(\rdd)$
is continuous. 
\end{enumerate}
\end{proposition}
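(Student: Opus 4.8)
The plan is to read off all three statements from the factorization $\mathcal{B}_A = \mathcal{F}_2\,\mathfrak{T}_A \circ \Xi$ built into Definition \ref{biltfr}, where $\Xi(f,g) = f\otimes\overline{g}$ denotes the (conjugated) tensor product. The key observation is that $\mathcal{F}_2\,\mathfrak{T}_A$ is a single \emph{linear} operator which, by Lemmas \ref{coord trans isom} and \ref{partial Fou isom}, is a topological isomorphism on each of the three spaces $L^{2}(\rdd)$, $\mathcal{S}(\rdd)$ and $\mathcal{S}'(\rdd)$. Thus every boundedness property of $\mathcal{B}_A$ is inherited from the corresponding property of the bilinear map $\Xi$ on the relevant level of the Gelfand triple, and density is preserved because a topological isomorphism carries dense sets to dense sets.

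First I would settle the $L^{2}$-level (i). Complex conjugation is an isometric (conjugate-linear) bijection of $L^{2}(\rd)$, and the tensor product satisfies $\|f\otimes\overline{g}\|_{L^{2}(\rdd)} = \|f\|_{L^{2}}\|g\|_{L^{2}}$, so $\Xi\colon L^{2}(\rd)\times L^{2}(\rd)\to L^{2}(\rdd)$ is a bounded bilinear map; composing with the bounded operator $\mathcal{F}_2\,\mathfrak{T}_A$ (Lemma \ref{partial Fou isom}(i) together with Lemma \ref{coord trans isom}(i)) yields boundedness and continuity of $\mathcal{B}_A$. For the density claim, since $g\mapsto\overline{g}$ is onto, the span of $\{f\otimes\overline{g}\}$ coincides with the span of the elementary tensors $\{f\otimes h : f,h\in L^{2}(\rd)\}$, i.e.\ the algebraic tensor product $L^{2}(\rd)\otimes L^{2}(\rd)$, which is dense in $L^{2}(\rdd)$ (for instance, the products of an orthonormal basis of $L^{2}(\rd)$ form an orthonormal basis of $L^{2}(\rdd)$). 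Applying the topological isomorphism $\mathcal{F}_2\,\mathfrak{T}_A$ then preserves this density.

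The Schwartz-level statement (ii) follows the same pattern: conjugation and the tensor product together map $\mathcal{S}(\rd)\times\mathcal{S}(\rd)$ continuously into $\mathcal{S}(\rdd)$, as recalled in the Preliminaries, while $\mathcal{F}_2\,\mathfrak{T}_A$ is a topological isomorphism of $\mathcal{S}(\rdd)$ by Lemmas \ref{coord trans isom}(ii) and \ref{partial Fou isom}(ii); hence $\mathcal{B}_A$ maps $\mathcal{S}(\rd)\times\mathcal{S}(\rd)$ continuously into $\mathcal{S}(\rdd)$. For the distributional level (iii) I would invoke the well-definedness of $f\otimes\overline{g}\in\mathcal{S}'(\rdd)$ for $f,g\in\mathcal{S}'(\rd)$ established in the Preliminaries, together with the weak-$*$ continuity of conjugation, and then transport everything through the extension of $\mathcal{F}_2\,\mathfrak{T}_A$ to an isomorphism of $\mathcal{S}'(\rdd)$ guaranteed by the same two lemmas.

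I expect the only genuinely delicate point to be the precise sense of continuity of the bilinear map $\Xi$ in (iii): unlike the $L^{2}$ and $\mathcal{S}$ levels, bilinear maps on $\mathcal{S}'(\rd)\times\mathcal{S}'(\rd)$ are in general only separately (or hypo-)continuous for the standard topologies, so I would formulate and verify continuity in each argument separately, which is all that is needed and all that the tensor-product construction supplies. Since $\mathcal{F}_2\,\mathfrak{T}_A$ is a genuine topological isomorphism, it transfers this separate continuity to $\mathcal{B}_A$ with no additional work.
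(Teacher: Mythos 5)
Your proof is correct and follows exactly the route the paper intends: the paper offers no detailed argument, stating only that the proposition follows ``immediately'' from Definition \ref{biltfr} together with Lemmas \ref{coord trans isom} and \ref{partial Fou isom}, which is precisely your factorization $\mathcal{B}_A=\mathcal{F}_2\,\mathfrak{T}_A\circ\Xi$ with the two lemmas supplying the isomorphism on each level of the triple and the preliminaries supplying the tensor-product mapping properties. Your closing caveat that continuity in (iii) should be read as separate (hypo-)continuity is a sound refinement of the paper's somewhat loosely stated claim, not a deviation from its argument.
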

Elementary properties of $\mathcal{B}_{A}\left(f,g\right)$ are the following. 
\begin{proposition}[Interchanging $f$ and $g$]
\label{interchanging f g}

Let $A\in\GLL$ and $f,g\in L^{2}\left(\mathbb{R}^{d}\right)$.
Then
\[
\mathcal{B}_{A}\left(g,f\right)\left(x,\omega\right)=\overline{\mathcal{B}_{C}\left(f,g\right)\left(x,\omega\right)},
\]
\end{proposition}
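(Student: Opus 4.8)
The plan is to establish the identity directly from the integral definition \eqref{BAe} and to read off the matrix $C$ by a single change of variables; the whole argument reduces to careful bookkeeping of the two arguments and of the signs. First I would expand, starting from
\[
\mathcal{B}_{A}(g,f)(x,\omega)=\int_{\rd}e^{-2\pi i\omega y}\,g(A_{11}x+A_{12}y)\,\overline{f(A_{21}x+A_{22}y)}\,dy,
\]
and take complex conjugates, which turns $e^{-2\pi i\omega y}$ into $e^{2\pi i\omega y}$ and interchanges the conjugated factor:
\[
\overline{\mathcal{B}_{A}(g,f)(x,\omega)}=\int_{\rd}e^{2\pi i\omega y}\,f(A_{21}x+A_{22}y)\,\overline{g(A_{11}x+A_{12}y)}\,dy.
\]
Performing the substitution $y\mapsto-y$ (whose Jacobian is $1$) restores the sign in the exponential and yields
\[
\overline{\mathcal{B}_{A}(g,f)(x,\omega)}=\int_{\rd}e^{-2\pi i\omega y}\,f(A_{21}x-A_{22}y)\,\overline{g(A_{11}x-A_{12}y)}\,dy .
\]

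Matching the arguments of $f$ and of $\overline{g}$ against the template \eqref{BAe} shows that the right-hand side is exactly $\mathcal{B}_{C}(f,g)(x,\omega)$ with
\[
C=\begin{pmatrix}A_{21}&-A_{22}\\ A_{11}&-A_{12}\end{pmatrix}=\tilde{I}\,A\,\mathcal{I}_{2},
\]
where $\tilde{I}$ is the flip and $\mathcal{I}_{2}$ is the matrix from Lemma \ref{partial Fou isom}. Since $\det C=\det A\neq0$ we have $C\in\GLL$, so $\mathcal{B}_{C}$ is well defined by Definition \ref{biltfr}; taking one further conjugate then gives the stated $\mathcal{B}_{A}(g,f)=\overline{\mathcal{B}_{C}(f,g)}$.

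Alternatively one can argue structurally from the operator form \eqref{BAi}, using that complex conjugation commutes with the real coordinate transformation $\mathfrak{T}_{C}$, that $\overline{f\otimes\overline{g}}=\overline{f}\otimes g=\mathfrak{T}_{\tilde{I}}(g\otimes\overline{f})$, and that $\overline{\mathcal{F}_{2}F}=\mathcal{F}_{2}^{-1}\overline{F}=\mathfrak{T}_{\mathcal{I}_{2}}\mathcal{F}_{2}\overline{F}$ (Lemma \ref{partial Fou isom}). These identities rewrite $\overline{\mathcal{B}_{C}(f,g)}=\mathcal{F}_{2}^{-1}\mathfrak{T}_{\tilde{I}C}(g\otimes\overline{f})$, and requiring this to equal $\mathcal{F}_{2}\mathfrak{T}_{A}(g\otimes\overline{f})=\mathcal{B}_{A}(g,f)$ on the total family of tensor products $g\otimes\overline{f}$ collapses to the matrix equation $\tilde{I}C=A\mathcal{I}_{2}$, i.e. the same $C=\tilde{I}A\mathcal{I}_{2}$.

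The computation is otherwise routine; the only points that demand attention are the order of the factors in the composition rule $\mathfrak{T}_{M}\mathfrak{T}_{N}$ and the precise reflection matrix $\mathcal{I}_{2}$ that is produced when one conjugates $\mathcal{F}_{2}$ — these are exactly what determine the minus signs in the second column of $C$. I do not expect a genuine obstacle here: the first, integral route sidesteps both subtleties entirely, so that is the proof I would actually write up.
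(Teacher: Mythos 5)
Your proof is correct, and your ``alternative'' structural argument is in fact the paper's own proof almost verbatim: the paper writes $\mathfrak{T}_{A}\left(g\otimes\overline{f}\right)=\overline{\mathfrak{T}_{\tilde{I}A}\left(f\otimes\overline{g}\right)}$ and then pulls the conjugation through $\mathcal{F}_{2}$ at the cost of the reflection $\mathcal{I}_{2}$, arriving at $\mathcal{B}_{A}\left(g,f\right)=\overline{\mathcal{F}_{2}\mathfrak{T}_{C}\left(f\otimes\overline{g}\right)}=\overline{\mathcal{B}_{C}\left(f,g\right)}$ with $C=\tilde{I}A\mathcal{I}_{2}$, exactly your identities $\overline{f\otimes\overline{g}}=\mathfrak{T}_{\tilde{I}}\left(g\otimes\overline{f}\right)$ and $\overline{\mathcal{F}_{2}F}=\mathcal{F}_{2}^{-1}\overline{F}$. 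Your preferred integral route is the same computation unfolded pointwise, and your sign bookkeeping (conjugate, substitute $y\mapsto-y$, match against \eqref{BAe}) correctly produces $C=\left(\begin{smallmatrix}A_{21}&-A_{22}\\A_{11}&-A_{12}\end{smallmatrix}\right)$. One caveat to make that route self-contained: for general $f,g\in L^{2}\left(\rd\right)$ and general $A\in\GLL$ the integral \eqref{BAe} is only formal (the paper defines $\mathcal{B}_{A}$ via \eqref{BAi}, and obtains pointwise absolute convergence only under right-regularity, cf.\ Theorem \ref{right-reg rep}), so you should either first take $f,g\in\mathcal{S}\left(\rd\right)$ and extend by density using the $L^{2}$-continuity of Proposition \ref{def bilA triple}, or note that the conjugation and reflection manipulations are valid as identities between bounded operators on $L^{2}\left(\rdd\right)$ --- which is precisely what your second route does, so no genuine gap remains. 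You are also right that the composition order is the one delicate point: with $\mathfrak{T}_{M}F=F\left(M\cdot\right)$ one has $\mathfrak{T}_{M}\mathfrak{T}_{N}=\mathfrak{T}_{NM}$ (the rule $\mathfrak{T}_{M}\mathfrak{T}_{N}=\mathfrak{T}_{MN}$ as displayed in the paper has the factors swapped), and it is this correct order that yields $C=\tilde{I}A\mathcal{I}_{2}$ rather than $\mathcal{I}_{2}\tilde{I}A$; the paper's proof implicitly uses the correct order.
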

where 
\[
C=\tilde{I}A\mathcal{I}_{2}=\left(\begin{array}{cc}
0 & I\\
I & 0
\end{array}\right)\left(\begin{array}{cc}
A_{11} & A_{12}\\
A_{21} & A_{22}
\end{array}\right)\left(\begin{array}{cc}
I & 0\\
0 & -I
\end{array}\right)=\left(\begin{array}{cc}
A_{21} & -A_{22}\\
A_{11} & -A_{12}
\end{array}\right).
\]
In particular, $\mathcal{B}_{A}f$ is a real-valued function if and
only if $A=C$, namely 
\[
A_{11}=A_{21},\qquad A_{12}=-A_{22}.
\]

\begin{proof}
This is an easy computation:
\[
\mathfrak{T}_{A}\left(g\otimes\overline{f}\right)=\overline{\mathfrak{T}_{\tilde{I}A}\left(f\otimes\overline{g}\right)},
\]
\[
\mathcal{B}_{A}\left(g,f\right)=\mathcal{F}_{2}\overline{\mathfrak{T}_{\tilde{I}A}\left(f\otimes\overline{g}\right)}=\overline{\mathcal{F}_{2}\mathfrak{T}_{C}\left(f\otimes\overline{g}\right)}=\overline{\mathcal{B}_{C}\left(f,g\right)},
\]
as desired.
\end{proof}
The following is a generalization of the fundamental identity of time-frequency analysis for the STFT, cf. \eqref{FI}. 
\begin{proposition}[Fundamental-like identity of TFA]

Let $A\in\GLL$ and $f,g\in L^{2}\left(\mathbb{R}^{d}\right)$.
Then
\[
\mathcal{B}_{A}\left(\hat{f},\hat{g}\right)\left(x,\omega\right)=\left|\det A\right|^{-1}\mathcal{B}_{C}\left(f,g\right)\left(-\omega,x\right),
\]
where 
\[
C=\mathcal{I}_{2}A^{\#}\tilde{I}=\left(\begin{array}{cc}
I & 0\\
0 & -I
\end{array}\right)\left(A^{-1}\right)^{\top}\left(\begin{array}{cc}
0 & I\\
I & 0
\end{array}\right).
\]
\end{proposition}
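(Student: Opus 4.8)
The plan is to manipulate the defining expression $\mathcal{B}_{A}(\hat f,\hat g)=\mathcal{F}_{2}\mathfrak{T}_{A}(\hat f\otimes\overline{\hat g})$ entirely at the operator level, reducing everything back to $f\otimes\overline g$ by means of the intertwining relations already at our disposal; the argument parallels the short computation used for Proposition \ref{interchanging f g}, with the full Fourier transform now playing the role that complex conjugation played there. The first step is to express the tensor $\hat f\otimes\overline{\hat g}$ through the (full) Fourier transform of $f\otimes\overline g$. Since $\widehat{\overline g}(\eta)=\overline{\hat g(-\eta)}$, one has $\mathcal{F}(f\otimes\overline g)(\xi,\eta)=\hat f(\xi)\,\overline{\hat g(-\eta)}$, and reflecting the second variable gives
\[
\hat f\otimes\overline{\hat g}=\mathfrak{T}_{\mathcal{I}_{2}}\,\mathcal{F}(f\otimes\overline g).
\]
Substituting this into the definition, the task becomes the computation of $\mathcal{F}_{2}\mathfrak{T}_{A}\mathfrak{T}_{\mathcal{I}_{2}}\mathcal{F}(f\otimes\overline g)$.

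The core of the argument is the behaviour of the full Fourier transform under a linear change of variables,
\[
\mathcal{F}\,\mathfrak{T}_{M}=\left|\det M\right|^{-1}\mathfrak{T}_{M^{\#}}\mathcal{F},\qquad M\in\GLL,
\]
obtained by the substitution $z\mapsto Mz$ in the Fourier integral; this is precisely the origin of both the factor $\left|\det A\right|^{-1}$ (recall $|\det\mathcal{I}_{2}|=1$) and of the transpose-inverse $A^{\#}$ in the statement. I would use it to commute $\mathcal{F}$ to the left of the coordinate transform, then split $\mathcal{F}=\mathcal{F}_{1}\mathcal{F}_{2}$ and absorb one factor via $\mathcal{F}_{2}^{2}=\mathfrak{T}_{\mathcal{I}_{2}}$ (a direct consequence of Lemma \ref{partial Fou isom}). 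What remains is an operator of the form $\mathcal{F}_{1}\mathfrak{T}_{D}$, with $D=\mathcal{I}_{2}A^{\#}$ an explicit product of $A^{\#}$ and $\mathcal{I}_{2}$, acting on $f\otimes\overline g$; Lemma \ref{partial Fou ctran}(ii) then rewrites $\mathcal{F}_{1}\mathfrak{T}_{D}$ as $\mathcal{F}_{2}\mathfrak{T}_{D\tilde{I}}$ up to the flip $\mathfrak{T}_{\tilde{I}}$, i.e.\ as the matrix-Wigner distribution $\mathcal{B}_{D\tilde{I}}(f,g)$.

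It then remains to identify the matrices and the evaluation point. One reads off $D\tilde{I}=\mathcal{I}_{2}A^{\#}\tilde{I}=C$, the matrix attached to $\mathcal{F}_{2}$, while the residual coordinate transforms left acting on the \emph{output} collapse into a single $\mathfrak{T}_{J^{-1}}$ (recalling $J=\mathcal{I}_{2}\tilde{I}$), which relocates the evaluation point to $(-\omega,x)$; together these give exactly
\[
\mathcal{B}_{A}(\hat f,\hat g)(x,\omega)=|\det A|^{-1}\mathcal{B}_{C}(f,g)(-\omega,x).
\]
The only genuinely delicate point — and the place where a sign or an ordering slip would corrupt the result — is the bookkeeping of the composition order of the coordinate transforms and the induced reflection/permutation of the phase-space variables; everything else is the routine change of variables above together with the cited lemmas.
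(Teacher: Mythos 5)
Your proposal is correct and takes essentially the same route as the paper's proof: both reduce $\hat f\otimes\overline{\hat g}$ to the full Fourier transform of $f\otimes\overline g$ up to the reflection $\mathfrak{T}_{\mathcal{I}_2}$ (the paper packages this via $g^{*}$, you apply the reflection up front, which is the same identity), then commute $\mathcal{F}$ past the coordinate transform to produce $\left|\det A\right|^{-1}$ and $A^{\#}$, absorb $\mathcal{F}_{2}^{2}=\mathfrak{T}_{\mathcal{I}_{2}}$, and invoke Lemma \ref{partial Fou ctran}(ii) to identify $\mathcal{B}_{C}$ with $C=\mathcal{I}_{2}A^{\#}\tilde{I}$ and the evaluation point $\left(-\omega,x\right)$. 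Your bookkeeping of the composition order and of the residual transform $\mathfrak{T}_{J^{-1}}$ (with $J=\mathcal{I}_{2}\tilde{I}$) matches the paper's computation exactly.
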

\begin{proof}
First of all, notice that $\hat{f}\otimes\overline{\hat{g}}=\hat{f}\otimes\widehat{g^{*}}=\mathcal{F}\left(f\otimes g^{*}\right)$,
where $g^{*}\left(t\right)=\overline{g\left(-t\right)}$. Then, an
easy computation shows that 
\[
\mathfrak{T}_{A}\mathcal{F}\left(H\right)=\frac{1}{\left|\det A\right|}\mathcal{F}\mathfrak{T}_{A^{\#}}\left(H\right),\qquad\forall H\in L^{2}\left(\rdd\right),
\]
where $A^{\#}=\left(A^{-1}\right)^{\top}$. Therefore, 
\[
\mathcal{B}_{A}\left(\hat{f},\hat{g}\right)=\left|\det A\right|^{-1}\mathcal{F}_{2}\mathcal{F}\mathfrak{T}_{A^{\#}}\left(f\otimes g^{*}\right)=\left|\det A\right|^{-1}\mathcal{I}_{2}\mathcal{F}_{1}\mathfrak{T}_{A^{\#}}\left(f\otimes g^{*}\right),
\]
where we used $\mathcal{F}=\mathcal{F}_{2}\mathcal{F}_{1}$ and $\mathcal{F}_{2}^{2}=\mathcal{I}_{2}$.
Notice now that 
\[
\mathfrak{T}_{A^{\#}}\left(f\otimes g^{*}\right)=\mathfrak{T}_{\mathcal{I}_{2}A^{\#}}\left(f\otimes\overline{g}\right),
\]
where $\mathcal{I}_{2}=\left(\begin{array}{cc}
I & 0\\
0 & -I
\end{array}\right)$. In conclusion, Lemma \ref{partial Fou ctran} gives 
\[
\mathcal{F}_{1}\mathfrak{T}_{QA^{\#}}=\widetilde{\mathcal{F}_{2}\mathfrak{T}_{QA^{\#}\tilde{I}}},
\]
hence the claimed result:
\[
\mathcal{B}_{A}\left(\hat{f},\hat{g}\right)\left(x,\omega\right)=\left|\det A\right|^{-1}\widetilde{\mathcal{I}_{2}\mathcal{B}_{QA^{\#}\tilde{I}}\left(f,g\right)}\left(x,\omega\right)=\left|\det A\right|^{-1}\mathcal{B}_{QA^{\#}\tilde{I}}\left(f,g\right)\left(-\omega,x\right).
\]
\end{proof}
\begin{proposition}[Fourier transform of a BTFD]
\label{ft of btfd}

Let $A\in\GLL$ and $f,g\in L^{2}\left(\mathbb{R}^{d}\right)$.
Then, 
\begin{equation}\label{FouB}
\mathcal{F}\mathcal{B}_{A}\left(f,g\right)\left(\xi,\eta\right)=\mathcal{B}_{AJ}\left(f,g\right)\left(\eta,\xi\right)=\mathcal{B}_{A\mathcal{I}_{2}}\left(f,g\right)\left(\xi,\eta\right),
\end{equation}
where 
\[
AJ=\left(\begin{array}{cc}
A_{11} & A_{12}\\
A_{21} & A_{22}
\end{array}\right)\left(\begin{array}{cc}
0 & I\\
-I & 0
\end{array}\right)=\left(\begin{array}{cc}
-A_{12} & A_{11}\\
-A_{22} & A_{21}
\end{array}\right).
\]
\end{proposition}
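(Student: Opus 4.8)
The plan is to reduce the statement to two structural facts already at hand: the factorisation $\mathcal{F}=\mathcal{F}_1\mathcal{F}_2$ of the full Fourier transform into partial transforms, and the exchange rule of Lemma~\ref{partial Fou ctran}(ii) relating $\mathcal{F}_1$ and $\mathcal{F}_2$ composed with a coordinate transformation. Throughout I may freely manipulate these operators since, by Lemmas~\ref{coord trans isom} and \ref{partial Fou isom}, $\mathfrak{T}_A$, $\mathcal{F}_1$ and $\mathcal{F}_2$ are all topological isomorphisms of $L^{2}(\rdd)$, so no approximation argument is needed. Writing $H=f\otimes\overline{g}$ so that $\mathcal{B}_A(f,g)=\mathcal{F}_2\mathfrak{T}_A H$ by Definition~\ref{biltfr}, I first compute
\[
\mathcal{F}\mathcal{B}_A(f,g)=\mathcal{F}_1\mathcal{F}_2\bigl(\mathcal{F}_2\mathfrak{T}_A H\bigr)=\mathcal{F}_1\mathcal{F}_2^{2}\mathfrak{T}_A H .
\]
The crucial simplification is that $\mathcal{F}_2^{2}$ is the reflection in the second variable, i.e. $\mathcal{F}_2^{2}=\mathfrak{T}_{\mathcal{I}_2}$ (an immediate consequence of Lemma~\ref{partial Fou isom}, already exploited in the proof of the fundamental-like identity). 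Since reflecting the second entry amounts to post-composing the argument of $H$ with $\mathcal{I}_2$, one gets $\mathcal{F}_2^{2}\mathfrak{T}_A=\mathfrak{T}_{A\mathcal{I}_2}$, and hence
\[
\mathcal{F}\mathcal{B}_A(f,g)=\mathcal{F}_1\mathfrak{T}_{A\mathcal{I}_2}H .
\]
This already records the intermediate matrix $A\mathcal{I}_2$.

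The second step converts the surviving $\mathcal{F}_1$ back into an $\mathcal{F}_2$, so as to recognise a genuine MWD. Applying Lemma~\ref{partial Fou ctran}(ii) with $A\mathcal{I}_2$ in the role of $A$ gives
\[
\mathcal{F}_1\mathfrak{T}_{A\mathcal{I}_2}H(\xi,\eta)=\mathcal{F}_2\mathfrak{T}_{B}H(\eta,\xi),\qquad B=(A\mathcal{I}_2)\tilde{I}.
\]
It remains to identify $B$, for which I would use the elementary block identity $\mathcal{I}_2\tilde{I}=J$, read off directly from
\[
\mathcal{I}_2=\begin{pmatrix}I&0\\0&-I\end{pmatrix},\qquad \tilde{I}=\begin{pmatrix}0&I\\I&0\end{pmatrix},\qquad J=\begin{pmatrix}0&I\\-I&0\end{pmatrix},
\]
so that $B=A(\mathcal{I}_2\tilde{I})=AJ$ and $\mathcal{F}_2\mathfrak{T}_B H=\mathcal{B}_{AJ}(f,g)$. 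Reading off the arguments then yields $\mathcal{F}\mathcal{B}_A(f,g)(\xi,\eta)=\mathcal{B}_{AJ}(f,g)(\eta,\xi)$, while the explicit block form of $AJ$ follows by carrying out the product; the description through the intermediate matrix $A\mathcal{I}_2$ is precisely the one appearing after the first step.

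I expect the only real difficulty to be bookkeeping rather than anything conceptual. One must track carefully the swap of the two phase-space variables that Lemma~\ref{partial Fou ctran}(ii) introduces when trading $\mathcal{F}_1$ for $\mathcal{F}_2$, and one must keep the correct order when absorbing $\mathcal{F}_2^{2}=\mathfrak{T}_{\mathcal{I}_2}$ into $\mathfrak{T}_A$, since the reversed order would spuriously replace $A\mathcal{I}_2$ by $\mathcal{I}_2 A$. As a safeguard I would cross-check the outcome by a direct computation: writing $\mathcal{B}_A(f,g)(x,\omega)=\int_{\rd}e^{-2\pi i\omega s}\mathfrak{T}_A H(x,s)\,ds$ and taking the $2d$-dimensional Fourier transform, Fubini collapses the $\omega$-integration to the constraint $s=-\eta$, leaving an $x$-integral that is visibly $\mathcal{B}_{AJ}(f,g)(\eta,\xi)$. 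This confirms both the matrix $AJ$ and the argument swap independently of any operator-ordering convention.
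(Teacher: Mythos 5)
Your handling of the first equality is correct, and it is essentially the paper's own argument with the two elementary moves permuted: the paper applies Lemma~\ref{partial Fou ctran}(ii) directly to $\mathfrak{T}_{A}$, obtaining $\cB_{A\tilde{I}}(f,g)(-\eta,\xi)$, and then absorbs the sign in the first output variable through $A\tilde{I}\mathcal{I}_{1}=AJ$; you instead absorb the reflection $\cF_{2}^{2}=\mathfrak{T}_{\mathcal{I}_{2}}$ into the coordinate change first, via $\mathfrak{T}_{\mathcal{I}_{2}}\mathfrak{T}_{A}=\mathfrak{T}_{A\mathcal{I}_{2}}$, and then apply the exchange lemma together with $\mathcal{I}_{2}\tilde{I}=J$. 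Your ordering warning is well taken: under the actual semantics $\mathfrak{T}_{M}F(z)=F(Mz)$ one has $\mathfrak{T}_{M}\mathfrak{T}_{N}=\mathfrak{T}_{NM}$, so the factor must indeed land on the right (the composition rule as displayed in the paper has the factors reversed). Same two ingredients, same conclusion, and your Fubini cross-check independently confirms $\cF\cB_{A}(f,g)(\xi,\eta)=\cB_{AJ}(f,g)(\eta,\xi)$.

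The genuine gap is the second equality in \eqref{FouB}. You dismiss it with ``the description through the intermediate matrix $A\mathcal{I}_{2}$ is precisely the one appearing after the first step'', but after your first step you hold $\cF\cB_{A}(f,g)=\cF_{1}\mathfrak{T}_{A\mathcal{I}_{2}}(f\otimes\overline{g})$, whereas by Definition~\ref{biltfr} $\cB_{A\mathcal{I}_{2}}(f,g)=\cF_{2}\mathfrak{T}_{A\mathcal{I}_{2}}(f\otimes\overline{g})$: by Lemma~\ref{partial Fou ctran}(i) these differ exactly by the flip of the output variables, i.e.\ your expression is $\mathfrak{T}_{\tilde{I}}\cB_{AJ}(f,g)$ --- your first equality again, swap included --- so nothing in your argument produces $\cB_{A\mathcal{I}_{2}}(f,g)(\xi,\eta)$. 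Worse, this step cannot be repaired, because the flip does not commute with $\cF_{2}$ and is not implemented by right matrix multiplication; the paper's own last line, asserting $\mathfrak{T}_{\tilde{I}}\cB_{AJ}(f,g)=\cB_{AJ\tilde{I}}(f,g)$, commits exactly this slip, and the resulting identity is false. Concretely, take $d=1$, $A=A_{1/2}$ (so $\cB_{A}=W$) and $\varphi(t)=e^{-\pi t^{2}}$: then $W\varphi(x,\omega)=\sqrt{2}\,e^{-2\pi(x^{2}+\omega^{2})}$, hence
\[
\cF W\varphi(\xi,\eta)=2^{-1/2}e^{-\pi(\xi^{2}+\eta^{2})/2}=\cB_{AJ}\varphi(\eta,\xi),
\qquad
\cB_{A\mathcal{I}_{2}}\varphi(\xi,\eta)=W\varphi(\xi,-\eta)=\sqrt{2}\,e^{-2\pi(\xi^{2}+\eta^{2})},
\]
so the first and second members of \eqref{FouB} agree while the third does not. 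The correct way to close your write-up is to record the result as $\cF\cB_{A}(f,g)(\xi,\eta)=\cB_{AJ}(f,g)(\eta,\xi)$, equivalently $\cF\cB_{A}(f,g)=\mathfrak{T}_{\tilde{I}}\cB_{AJ}(f,g)$, and drop the final expression; note that only this first equality is used downstream, in the proof of Theorem~\ref{maint}.
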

\begin{proof}
Since $\mathcal{B}_{A}\left(f,g\right)\in L^{2}\left(\rdd\right)$,
we have:
\[
\mathcal{F}\mathcal{B}_{A}\left(f,g\right)\left(\xi,\eta\right)=\mathcal{F}_{1}\mathcal{F}_{2}^{2}\mathfrak{T}_{A}\left(f\otimes\overline{g}\right)\left(\xi,\eta\right)=\mathcal{F}_{1}\mathfrak{T}_{A}\left(f\otimes\overline{g}\right)\left(\xi,-\eta\right).
\]
From Lemma \ref{partial Fou ctran} we get
\[
\mathcal{F}_{1}\mathfrak{T}_{A}\left(f\otimes\overline{g}\right)\left(\xi,-\eta\right)=\mathcal{F}_{2}\mathfrak{T}_{A\tilde{I}}\left(f\otimes\overline{g}\right)\left(-\eta,\xi\right)=\mathcal{B}_{A\tilde{I}}\left(f,g\right)\left(-\eta,\xi\right).
\]
To conclude, notice that 
\[
\mathcal{B}_{A\tilde{I}}\left(f,g\right)\left(-\eta,\xi\right)=\mathcal{B}_{A\tilde{I}\mathcal{I}_{1}}\left(f,g\right)\left(\eta,\xi\right)=\mathcal{B}_{AJ}\left(f,g\right)\left(\eta,\xi\right).
\]
Since $\mathcal{B}_{AJ}\left(f,g\right)\left(\eta,\xi\right)=\mathfrak{T}_{\tilde{I}}\mathcal{B}_{AJ}\left(f,g\right)\left(\xi,\eta\right)$,
we finally have 
\[
\mathcal{F}\mathcal{B}_{A}\left(f,g\right)\left(\xi,\eta\right)=\mathcal{B}_{AJ\tilde{I}}\left(f,g\right)\left(\xi,\eta\right)=\mathcal{B}_{A\mathcal{I}_{2}}\left(f,g\right)\left(\xi,\eta\right).
\]
\end{proof}

\subsection{Additional regularity of submatrices }

Following a known pattern for the Wigner transform, it is interesting
to determine the conditions under which a bilinear time-frequency
distribution can be related to the STFT. 
\begin{definition}
A $d$-block matrix $A=\left(\begin{array}{cc}
A_{11} & A_{12}\\
A_{21} & A_{22}
\end{array}\right)\in\mathbb{R}^{2d\times2d}$ is called left-regular (resp. right-regular) if the submatrices $A_{11},A_{21}\in\mathbb{R}^{d\times d}$
(resp. $A_{12},A_{22}\in\mathbb{R}^{d\times d}$) are invertible. 
\end{definition}
\begin{remark}
It is an easy exercise of linear algebra to prove that $A=\left(\begin{array}{cc}
A_{11} & A_{12}\\
A_{21} & A_{22}
\end{array}\right)\in\GLL$ is left-regular (resp. right-regular) if and only if the matrix $A^{\#}=\left(A^{-1}\right)^{\top}=\left(\begin{array}{cc}
(A^{\#})_{11} & (A^{\#})_{12}\\
(A^{\#})_{21} & (A^{\#})_{22}
\end{array}\right)$ is right-regular (resp. left-regular). Also, beware that $(A^{\#})_{ij}\neq A^{\#}_{ij} = (A_{ij}^{\top})^{-1}$, $i,j=1,2$. 
\end{remark}
\begin{theorem}
\label{right-reg rep}Assume $A\in\GLL$
and right-regular. For every $f,g\in L^{2}\left(\mathbb{R}^{d}\right)$, the following formula holds:
\[
\mathcal{B}_{A}\left(f,g\right)\left(x,\omega\right)=\left|\det A_{12}\right|^{-1}e^{2\pi i A_{12}^{\#}\omega\cdot A_{11}x}V_{\tilde{g}}f\left(c\left(x\right),d\left(\omega\right)\right),\quad x,\omega\in\rd,
\]
where 
\[
c\left(x\right)=\left(A_{11}-A_{12}A_{22}^{-1}A_{21}\right)x,\qquad d\left(\omega\right)=A_{12}^{\#}\omega,\qquad\tilde{g}\left(t\right)=g\left(A_{22}A_{12}^{-1}t\right).
\]
\end{theorem}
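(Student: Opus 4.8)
The plan is to prove the identity by direct computation, starting from the integral representation \eqref{BAe} of $\mathcal{B}_A(f,g)$ and performing two successive changes of variables to reshape the integrand into the defining integral of a short-time Fourier transform. Writing out \eqref{BAe},
\[
\mathcal{B}_A(f,g)(x,\omega) = \int_{\rd} e^{-2\pi i \omega\cdot y}\, f(A_{11}x + A_{12}y)\,\overline{g(A_{21}x + A_{22}y)}\,dy,
\]
I would first exploit the invertibility of $A_{12}$ through the substitution $u = A_{12}y$, which yields the Jacobian factor $|\det A_{12}|^{-1}$ and turns the exponent into $\omega\cdot A_{12}^{-1}u = A_{12}^{\#}\omega\cdot u = d(\omega)\cdot u$, using $A_{12}^{\#} = (A_{12}^{-1})^{\top}$ and the transpose rule for the scalar product. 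At this stage the argument of $f$ becomes $A_{11}x + u$ and the argument of $g$ becomes $A_{21}x + A_{22}A_{12}^{-1}u$.

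Next I would perform the affine shift $v = A_{11}x + u$, so that $f$ is evaluated simply at $v$ while the exponential splits as $e^{-2\pi i d(\omega)\cdot(v - A_{11}x)} = e^{2\pi i A_{12}^{\#}\omega\cdot A_{11}x}\,e^{-2\pi i d(\omega)\cdot v}$; the first factor is exactly the chirp prefactor appearing in the statement and it comes out of the integral. The argument of $g$ then reads $A_{22}A_{12}^{-1}v + (A_{21} - A_{22}A_{12}^{-1}A_{11})x$, and the whole point is to recognise this as $A_{22}A_{12}^{-1}(v - c(x))$, i.e. as $\tilde{g}(v - c(x))$ with $\tilde{g}(t) = g(A_{22}A_{12}^{-1}t)$.

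The step that carries the real content --- and the only place where the invertibility of $A_{22}$ (the second half of right-regularity, beyond that of $A_{12}$) is genuinely used --- is the verification of the algebraic identity
\[
-A_{22}A_{12}^{-1}c(x) = (A_{21} - A_{22}A_{12}^{-1}A_{11})x,
\]
which upon left-multiplication by $-A_{12}A_{22}^{-1}$ and the cancellation $A_{12}A_{22}^{-1}A_{22}A_{12}^{-1} = I$ forces precisely $c(x) = (A_{11} - A_{12}A_{22}^{-1}A_{21})x$, matching the definition in the statement. Once this is in place, the remaining integral
\[
\int_{\rd} f(v)\,\overline{\tilde{g}(v - c(x))}\,e^{-2\pi i v\cdot d(\omega)}\,dv
\]
is, by the very definition \eqref{STFTdef} of the short-time Fourier transform, equal to $V_{\tilde{g}}f(c(x), d(\omega))$, and assembling the pieces gives the claimed formula. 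I expect no serious obstacle here: the difficulty is purely bookkeeping, namely keeping the transpose-inverse symbol $A^{\#}$ and the induced adjoint relations for the scalar product consistent throughout the two substitutions, and correctly tracking which block invertibility is consumed at each step.
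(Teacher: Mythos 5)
Your proposal is correct and follows essentially the same route as the paper's own proof: the two successive substitutions $u=A_{12}y$ and $v=A_{11}x+u$ are exactly the paper's single change of variable $z=A_{11}x+A_{12}y$ carried out in stages, after which the integral is recognised as $V_{\tilde{g}}f(c(x),d(\omega))$ with the same chirp prefactor and Jacobian $\lvert\det A_{12}\rvert^{-1}$. The only cosmetic difference is that the paper first rewrites $f(A_{11}x+A_{12}y)$ and $g(A_{21}x+A_{22}y)$ as translates of the dilated functions $\mathfrak{T}_{A_{12}}f$ and $\mathfrak{T}_{A_{22}}g$ to justify pointwise convergence of the integral for $f,g\in L^{2}(\mathbb{R}^{d})$, a point your computation covers implicitly since $f\cdot\overline{T_{c(x)}\tilde{g}}\in L^{1}(\mathbb{R}^{d})$.
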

\begin{proof}
If $A$ is right-regular, for any $f,g\in L^{2}\left(\mathbb{R}^{d}\right)$
the functions $f'=\mathfrak{T}_{A_{12}}f$ and $g'=\mathfrak{T}_{A_{22}}g$
are well-defined in $L^{2}\left(\mathbb{R}^{d}\right)$. Therefore,
we can write 
\[
f\left(A_{11}x+A_{12}y\right)=T_{-A_{12}^{-1}A_{11}x}f'\left(y\right),\qquad g\left(A_{21}x+A_{22}y\right)=T_{-A_{22}^{-1}A_{21}x}g'\left(y\right)
\]
and thus the integral 
\[
\mathcal{B}_{A}\left(f,g\right)\left(x,\omega\right)=\int_{\mathbb{R}^{d}}e^{-2\pi i\omega y}f\left(A_{11}x+A_{12}y\right)\overline{g\left(A_{21}x+A_{22}y\right)}dy
\]
is defined pointwise. Introducing the change of variable $z=A_{11}x+A_{12}y$
gives the claimed representation. 
\end{proof}
Let us exhibit the continuity properties of bilinear time-frequency distributions on Lebesgue spaces. 
\begin{proposition}
\label{right-regular continuity}Assume $A\in\GLL$
and right-regular. For any $1<p<\infty$ and $q\ge2$ such that $q'\le p\le q$, $f\in L^{p}\left(\mathbb{R}^{d}\right)$
and $g\in L^{p'}\left(\mathbb{R}^{d}\right)$, we have
\begin{enumerate}[label=(\roman*)]
\item $\mathcal{B}_{A}(f,g)\in L^{q}(\rdd)$,
with
\begin{equation}\label{qnorm estimate}
\left\Vert \mathcal{B}_{A}(f,g)\right\Vert _{q}\le\frac{\left\Vert f\right\Vert _{p}\left\Vert g\right\Vert _{p'}}{\left|\det A\right|^{\frac{1}{q}}\left|\det A_{12}\right|^{\frac{1}{p}-\frac{1}{q}}\left|\det A_{22}\right|^{\frac{1}{p'}-\frac{1}{q}}}.
\end{equation}
\item $\mathcal{B}_{A}(f,g)\in C_{0}(\rdd)$. In particular, $\mathcal{B}_{A}(f,g)\in L^{\infty}(\rdd)$.
\end{enumerate}
\end{proposition}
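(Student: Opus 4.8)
The plan is to reduce both claims to properties of the short-time Fourier transform via the right-regular representation of Theorem \ref{right-reg rep}. Taking absolute values there, the unimodular chirp factor disappears and one is left with
\[
\bigl|\mathcal{B}_{A}(f,g)(x,\omega)\bigr| = |\det A_{12}|^{-1}\,\bigl|V_{\tilde g}f(c(x),d(\omega))\bigr|,
\]
where $c(x)=(A_{11}-A_{12}A_{22}^{-1}A_{21})x$, $d(\omega)=A_{12}^{\#}\omega$ and $\tilde g(t)=g(A_{22}A_{12}^{-1}t)$. Everything then rests on a single sharp (Lieb-type) bound for the STFT: if $q\ge 2$ and $q'\le p\le q$, then
\[
\|V_{h}f\|_{L^{q}(\rdd)}\le \|f\|_{L^{p}(\rd)}\,\|h\|_{L^{p'}(\rd)},\qquad f\in L^{p}(\rd),\ h\in L^{p'}(\rd).
\]

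To establish this STFT estimate I would first freeze $x$ and view $\omega\mapsto V_hf(x,\omega)$ as the Fourier transform of $f\cdot T_x\overline h$. Since $q\ge 2$, Hausdorff--Young with exponent $q'\le 2$ gives $\|V_hf(x,\cdot)\|_{L^{q}_{\omega}}\le\bigl(\int|f(y)|^{q'}|h(y-x)|^{q'}\,dy\bigr)^{1/q'}$, so that $\|V_hf\|_{q}^{q}$ is dominated by the $L^{q/q'}$-norm of the convolution of $|f|^{q'}$ with the reflection of $|h|^{q'}$. Young's convolution inequality with exponents $a=p/q'$ and $b=p'/q'$ then yields $\|V_hf\|_{q}\le\|f\|_{p}\|h\|_{p'}$. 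The admissibility conditions for Young's inequality, namely $a\ge 1$, $b\ge 1$ and $q/q'\ge 1$, are exactly $q'\le p$, $p\le q$ and $q\ge 2$; this is the main technical point, since it both explains the stated range of exponents and produces the clean constant $1$.

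With this bound in hand, part (i) is a change-of-variables computation. Substituting $u=c(x)$, $v=d(\omega)$ in $\|\mathcal{B}_{A}(f,g)\|_{q}^{q}$ produces the Jacobian factor $|\det A_{12}|\,|\det(A_{11}-A_{12}A_{22}^{-1}A_{21})|^{-1}$, and the Schur-complement identity $\det A=\det A_{22}\cdot\det(A_{11}-A_{12}A_{22}^{-1}A_{21})$ rewrites it through $\det A$ and $\det A_{22}$. Applying the STFT estimate to $V_{\tilde g}f$ and rescaling $\|\tilde g\|_{p'}=(|\det A_{12}|/|\det A_{22}|)^{1/p'}\|g\|_{p'}$, one collects all determinant powers; using $1/p+1/p'=1$ the exponents simplify to $1/q-1/p$ on $|\det A_{12}|$ and $1/q-1/p'$ on $|\det A_{22}|$, reproducing exactly the constant in \eqref{qnorm estimate}. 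This part is routine bookkeeping once the STFT inequality and the Schur identity are available.

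For part (ii) I would use the same representation. Since $1<p<\infty$, both $L^{p}(\rd)$ and $L^{p'}(\rd)$ contain $\mathcal{S}(\rd)$ densely; choosing $f_n\to f$ in $L^p$ and $h_n\to\tilde g$ in $L^{p'}$, Proposition \ref{def bilA triple} (applied to the STFT as a matrix-Wigner distribution) gives $V_{h_n}f_n\in\mathcal{S}(\rdd)\subset C_0(\rdd)$, while the elementary Hölder bound $\|V_hf\|_{\infty}\le\|f\|_p\|h\|_{p'}$ shows $V_{h_n}f_n\to V_{\tilde g}f$ uniformly; as $C_0$ is closed under uniform limits, $V_{\tilde g}f\in C_0(\rdd)$. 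Finally $\mathcal{B}_{A}(f,g)$ is obtained from $V_{\tilde g}f$ by multiplication with a continuous unimodular chirp and by the invertible linear change of variables $(x,\omega)\mapsto(c(x),d(\omega))$, both of which preserve $C_0(\rdd)$, whence $\mathcal{B}_{A}(f,g)\in C_0(\rdd)\subset L^{\infty}(\rdd)$. The only delicate point is the density/approximation step, which is precisely what forces the open range $1<p<\infty$.
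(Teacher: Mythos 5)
Your proof is correct and follows essentially the same route as the paper: reduction to the STFT via the right-regular representation of Theorem \ref{right-reg rep}, the bound $\|V_h f\|_q \le \|f\|_p \|h\|_{p'}$, the change of variables with the Schur-complement identity $\det A = \det A_{22}\det\left(A_{11}-A_{12}A_{22}^{-1}A_{21}\right)$, and a density argument for the $C_0$ claim. The only differences are cosmetic: you prove the key STFT estimate inline (via Hausdorff--Young and Young, which is exactly the content of the cited \cite[Proposition 3.1]{bdo lebesgue}) rather than quoting it, and you run the approximation step at the level of $V_{\tilde g}f$ instead of directly for $\mathcal{B}_A(f,g)$, both of which are harmless.
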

\begin{proof}
~
\begin{enumerate}[label=(\roman*)]
\item We use the result \cite[Proposition 3.1]{bdo lebesgue} for the $L^{q}$-norm
of the STFT, that is:
\begin{flalign*}
\left\Vert \mathcal{B}_{A}\left(f,g\right)\right\Vert _{q} & =\left|\det A_{12}\right|^{-1}\left\Vert V_{\tilde{g}}f\left(c\left(x\right),d\left(\omega\right)\right)\right\Vert _{q}\\
 & =\left|\det A_{12}\right|^{-1}\left\Vert D_{W}V_{\tilde{g}}f\right\Vert _{q}\\
 & =\left|\det A_{12}\right|^{-1}\left|\det W\right|^{-1/q}\left\Vert V_{\tilde{g}}f\right\Vert _{q}\\
 & \le\left|\det A_{12}\right|^{-1}\left|\det W\right|^{-1/q}\left\Vert f\right\Vert _{p}\left\Vert \tilde{g}\right\Vert _{p'}\\
 & =\left|\det A_{12}\right|^{-1}\left|\det W\right|^{-1/q}\left\Vert f\right\Vert _{p}\left(\frac{\left|\det A_{12}\right|^{1/p'}}{\left|\det A_{22}\right|^{1/p'}}\left\Vert g\right\Vert _{p'}\right)\\
 & =\frac{\left\Vert f\right\Vert _{p}\left\Vert g\right\Vert _{p'}}{\left|\det W\right|^{1/q}\left|\det A_{12}\right|^{1/p}\left|\det A_{22}\right|^{1/p'}}.
\end{flalign*}
where $D_{M}$ denotes the dilation by the invertible matrix $M\in\GLL$,
namely $D_{M}F\left(z\right)=F\left(Mz\right)$, $z\in\rdd$.
In particular, since 
\[
W=\left(\begin{array}{cc}
A_{11}-A_{12}A_{22}^{-1}A_{21} & 0\\
0 & A_{12}^{\#}
\end{array}\right),
\]
we see that 
\[
\det W=\det\left(A_{11}-A_{12}A_{22}^{-1}A_{21}\right)\cdot\frac{1}{\det A_{12}}=\frac{\det A}{\det A_{12}\det A_{22}}\ne0,
\]
hence
\[
\left\Vert \mathcal{B}_{A}\left(f,g\right)\right\Vert _{q}\le\frac{\left\Vert f\right\Vert _{p}\left\Vert g\right\Vert _{p'}}{\left|\det A\right|^{1/q}\left|\det A_{12}\right|^{\frac{1}{p}-\frac{1}{q}}\left|\det A_{22}\right|^{\frac{1}{p'}-\frac{1}{q}}}.
\]
\item Arguing by density, there exist sequences $\left\{ f_{n}\right\} ,\left\{ g_{n}\right\} \in\mathcal{S}\left(\mathbb{R}^{d}\right)$
such that $f_{n}\rightarrow f$ in $L^{p}$ and $g_{n}\rightarrow g$
in $L^{p'}$. Since $\cB_{A}\left(f_{n},g_{n}\right)\in\mathcal{S}\left(\mathbb{R}^{2d}\right)\subset C_{0}\left(\mathbb{R}^{2d}\right)$ by Proposition \ref{def bilA triple},
we have 
\begin{alignat*}{1}
\left\Vert \cB_{A}\left(f_{n},g_{n}\right)-\cB_{A}\left(f,g\right) \right\Vert _{\infty} & =\left\Vert \cB_{A}\left(f_n,g_n\right)-\cB_{A}(f_n,g)+\cB_{A}(f_n,g)-\cB_{A}(f,g)\right\Vert _{\infty}\\
& \le\left\Vert \cB_{A}(f,g_n-g)\right\Vert _{\infty}+\left\Vert \cB_{A}(f-f_n,g)\right\Vert _{\infty}\\
& \le\frac{\left\Vert f\right\Vert _{p}\left\Vert g_n-g\right\Vert _{p'}+\left\Vert f-f_n\right\Vert _{p}\left\Vert g\right\Vert _{p'}}{\left|\det A\right|^{\frac{1}{q}}\left|\det A_{12}\right|^{\frac{1}{p}-\frac{1}{q}}\left|\det A_{22}\right|^{\frac{1}{p'}-\frac{1}{q}}}.\\
\end{alignat*}
Since the sequence $\left\{ \left\Vert f_{n}\right\Vert _{p}\right\} $
is bounded, we then have $$\lim_{n\rightarrow\infty}\left\Vert\cB_{A}\left(f_{n},g_{n}\right)-\cB_{A}\left(f,g\right)\right\Vert _{\infty}=0.$$
This implies $\cB_{A}(f,g)\in C_{0}\left(\mathbb{R}^{2d}\right)$, as desired.
\end{enumerate}
\end{proof}
\begin{corollary}[Riemann-Lebesgue for the STFT]
Let $1<p<\infty$, $f\in L^{p}\left(\mathbb{R}^{d}\right)$ and $g\in L^{p'}\left(\mathbb{R}^{d}\right)$. Then, $V_{g}f\in C_{0}\left(\rdd\right)$
\end{corollary}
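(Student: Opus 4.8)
The plan is to recognize the statement as an immediate consequence of the continuity result already established for matrix-Wigner distributions, namely Proposition \ref{right-regular continuity}(ii). The essential point is that the STFT is itself a MWD: by \eqref{ASTFT} we have $V_g f = \mathcal{B}_{A_{ST}}(f,g)$ with
\[
A_{ST}=\left(\begin{array}{cc}
0 & I\\
-I & I
\end{array}\right)\in\GLL .
\]
So the first step is simply to invoke this identification and then check that $A_{ST}$ satisfies the hypotheses of Proposition \ref{right-regular continuity}.

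Next I would verify that $A_{ST}$ is right-regular. This is immediate from the definition: the relevant submatrices are $A_{12}=I$ and $A_{22}=I$, both of which are the $d\times d$ identity and hence invertible. (One also notes $\det A_{ST}\neq 0$, consistent with $A_{ST}\in\GLL$.) Thus the structural assumption on the matrix in Proposition \ref{right-regular continuity} is met.

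The only genuine bookkeeping is to confirm that, for every exponent $p\in(1,\infty)$, one can select an admissible $q$ meeting the index constraints $q\geq 2$ and $q'\leq p\leq q$ required by Proposition \ref{right-regular continuity}. I would take $q=\max\{p,p'\}$. If $p\geq 2$ then $q=p\geq 2$, and $q'=p'\leq 2\leq p=q$; if $1<p<2$ then $q=p'>2$, and $q'=p\leq p<p'=q$. In either case the three conditions hold, so the proposition applies with $f\in L^p(\rd)$ and $g\in L^{p'}(\rd)$.

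Applying Proposition \ref{right-regular continuity}(ii) then yields $\mathcal{B}_{A_{ST}}(f,g)\in C_0(\rdd)$, which is precisely $V_g f\in C_0(\rdd)$, completing the proof. There is no real obstacle here; the statement is a corollary by design, and the sole point requiring a moment's care is the uniform choice of the auxiliary exponent $q$ across the full range $1<p<\infty$, handled by the $\max\{p,p'\}$ selection above.
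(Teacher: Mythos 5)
Your proof is correct and follows exactly the paper's own route: identify $V_gf=\mathcal{B}_{A_{ST}}(f,g)$ via \eqref{ASTFT}, note that $A_{ST}$ is right-regular, and apply Proposition \ref{right-regular continuity}(ii). Your explicit verification that $q=\max\{p,p'\}$ satisfies $q\geq 2$ and $q'\leq p\leq q$ is a detail the paper leaves implicit, and it is checked correctly.
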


\begin{proof}
	It follows from Proposition \ref{qnorm estimate}, since $V_g(f)=\cB_{A_{ST}}(f,g)$ with $A_{ST}$ right-regular, cf. \eqref{ASTFT}.
\end{proof}
We conclude this section by mentioning that right-regularity is indeed
a necessary condition for the continuity of $\mathcal{B}_{A}\left(f,g\right)$,
as proved in the following result. 
\begin{theorem}[{\cite[Theorem 1.2.9]{bayer}}]
 Assume $A\in\GLL$ such that $\det A_{22}\ne0$
but $\det A_{12}=0$. Then, there exist $f,g\in L^{2}\left(\mathbb{R}^{d}\right)$
such that $\mathcal{B}_{A}\left(f,g\right)$ is not a continuous function
on $\rdd$. 
\end{theorem}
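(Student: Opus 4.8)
The plan is to exploit the invertibility of $A_{22}$ to recast $\mathcal{B}_{A}(f,g)$ as a twisted STFT-type integral and then read off a discontinuity from the \emph{singularity} of the resulting window matrix. First I would substitute $w=A_{21}x+A_{22}y$ in \eqref{BAe}; since $\det A_{22}\neq 0$ this is a legitimate change of variables and gives, a.e.,
\[
\mathcal{B}_{A}(f,g)(x,\omega)=\frac{e^{2\pi i\,\omega\cdot A_{22}^{-1}A_{21}x}}{|\det A_{22}|}\,J(x,A_{22}^{\#}\omega),\qquad J(x,\xi):=\int_{\rd}e^{-2\pi i\xi\cdot w}f(Lx+Bw)\overline{g(w)}\,dw,
\]
where $L=A_{11}-A_{12}A_{22}^{-1}A_{21}$ and $B=A_{12}A_{22}^{-1}$. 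Since the prefactor is continuous and nowhere zero and $\omega\mapsto A_{22}^{\#}\omega$ is a linear homeomorphism, $\mathcal{B}_{A}(f,g)$ admits a continuous representative if and only if $J$ does; so it suffices to produce $f,g\in L^2(\rd)$ making $J$ discontinuous. Two structural facts drive the construction: $\det B=\det A_{12}/\det A_{22}=0$, so $B$ is singular with $K:=\ker B\neq\{0\}$; and, by the Schur determinant identity $\det A=\det A_{22}\cdot\det L$ together with $\det A\neq0$, the Schur complement $L$ is \emph{invertible}.

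Next I would split both the integration and the evaluation along the geometry of $B$, choosing $f,g$ as tensor products adapted to the orthogonal decompositions $\rd=K^{\perp}\oplus K$ (domain side) and $\rd=\mathrm{Im}\,B\oplus W$, $W:=(\mathrm{Im}\,B)^{\perp}$ (range side). Concretely take $g=g_1\otimes g_2$ with $g_1,g_2$ Gaussians on $K^{\perp},K$ respectively, and $f=f^{(1)}\otimes f^{(2)}$ with $f^{(1)}$ a Gaussian on $\mathrm{Im}\,B$ and $f^{(2)}\in L^2(W)$ a function \emph{without continuous representative} (e.g.\ the indicator of a box). Writing $w=u+v$ with $u\in K^{\perp},v\in K$ (so that $Bw=Bu$) and $Lx=b(x)+c(x)$ with $b(x)\in\mathrm{Im}\,B$, $c(x)=P_W(Lx)\in W$, the $K$-integration and the range splitting factor the integral as
\[
J(x,\xi)=\overline{\widehat{g_2}(-\xi_K)}\;f^{(2)}(c(x))\;\Psi(x,\xi_{K^{\perp}}),\qquad \Psi(x,\eta)=\int_{K^{\perp}}e^{-2\pi i\eta\cdot u}f^{(1)}(b(x)+Bu)\overline{g_1(u)}\,du,
\]
where $\xi_K,\xi_{K^{\perp}}$ denote the components of $\xi$. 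After the change of variable $s=Bu$ (here $B|_{K^{\perp}}\colon K^{\perp}\to\mathrm{Im}\,B$ is an isomorphism) the factor $\Psi$ is, up to a constant, an STFT on $\mathrm{Im}\,B\cong\bR^{\mathrm{rank}\,B}$ of the Gaussian $f^{(1)}$ against a Gaussian window; hence $\Psi$ is continuous and nowhere vanishing, and the same holds for the Gaussian $\overline{\widehat{g_2}(-\xi_K)}$.

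It remains to argue that $x\mapsto f^{(2)}(P_W(Lx))$ has no continuous representative. This is the crux and the only genuinely delicate point. Because $L$ is invertible and $P_W$ is surjective, $P_WL\colon\rd\to W$ is a surjective linear map, so the affine slice $Lx+\mathrm{Im}\,B$ truly moves transversally as $x$ varies; this is precisely where the invertibility of $L$ enters (had $L$ mapped into $\mathrm{Im}\,B$, the bad factor would be constant and no singularity would survive). To turn a.e.-equality in $\rd$ into control of the lower-dimensional object $f^{(2)}$, I would argue by contradiction: if $f^{(2)}(P_WLx)$ agreed a.e.\ with a continuous function $F$, then fixing a linear section $\sigma\colon W\to\rd$ of $P_WL$ and applying Fubini along the fibers of $P_WL$, the continuity of $F$ forces $F$ to be constant on a.e.\ fiber (a continuous function a.e.\ equal to a constant equals that constant), whence $p\mapsto F(\sigma(p))$ is a continuous representative of $f^{(2)}$, contradicting its choice. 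Since $\Psi$ and $\overline{\widehat{g_2}(-\xi_K)}$ are continuous and nowhere zero, multiplying by them cannot repair the discontinuity, so $J$ — and therefore $\mathcal{B}_{A}(f,g)$ — fails to have a continuous representative, as claimed.
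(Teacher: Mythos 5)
The paper does not actually prove this statement --- it imports it wholesale from \cite[Theorem 1.2.9]{bayer} --- so there is no internal proof to compare against, and your argument must stand on its own. It does: the proposal is a correct, self-contained proof. The reduction via $w=A_{21}x+A_{22}y$ (legitimate since $\det A_{22}\ne0$) correctly isolates $J(x,\xi)$ with $L=A_{11}-A_{12}A_{22}^{-1}A_{21}$ invertible by the Schur identity $\det A=\det A_{22}\det L$, and $B=A_{12}A_{22}^{-1}$ singular; the tensor construction along $\rd=K^{\perp}\oplus K$ and $\rd=\mathrm{Im}\,B\oplus W$ factors $J$ exactly as you claim, because $Bw=Bu$ for $w=u+v$ and $Lx+Bu$ splits as $\left(b(x)+Bu\right)+c(x)$; the factors $\overline{\widehat{g_2}(-\xi_K)}$ and $\Psi$ are continuous and zero-free ($\Psi$ is a Gaussian Fourier integral whose quadratic form has positive-definite real part, hence the exponential of a quadratic polynomial in $(b(x),\eta)$, jointly real-analytic since $b(x)$ is linear in $x$); and your Fubini-plus-section argument at the crux is sound: for a.e.\ $p\in W$ the continuous function $k\mapsto F(\sigma(p)+k)$ is a.e.\ equal to the constant $f^{(2)}(p)$, hence identically equal, so $F\circ\sigma$ is a continuous representative of $f^{(2)}$, contradiction. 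Note that a section exists explicitly, $\sigma=L^{-1}|_W$, since $P_{W}LL^{-1}p=P_{W}p=p$ for $p\in W$.

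Three small points deserve a line each in a polished write-up, though none is a genuine gap. (1) You should record that $W\ne\{0\}$: this follows from $\mathrm{rank}\,B=\mathrm{rank}\,A_{12}<d$ (forced by $\det A_{12}=0$), and without it there is no room to place a discontinuous $f^{(2)}$ --- indeed for $A$ right-regular, Theorem \ref{right-reg rep} shows $\mathcal{B}_{A}(f,g)$ is always continuous, so this is exactly where the hypothesis $\det A_{12}=0$ bites. (2) The pointwise manipulations need the remark that for your concrete $f,g$ (Gaussian tensor bounded indicator against Gaussian tensor Gaussian) the integrand in \eqref{BAe} is dominated by a Gaussian in $w$ for every fixed $x$, so the integral converges absolutely and agrees a.e.\ with the $L^2$-definition $\mathcal{F}_{2}\mathfrak{T}_{A}\left(f\otimes\overline{g}\right)$ of Definition \ref{biltfr}. (3) Since $P_{W}L$ is a surjective linear map, preimages of null sets of $W$ are null in $\rd$, so $f^{(2)}\circ P_{W}L$ is well defined on a.e.-equivalence classes and the conclusion ``no continuous representative'' is the right notion throughout.
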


\subsection{Orthogonality and inversion formulas}

A fundamental and desirable property for a time-frequency
distribution is the validity of the so-called orthogonality relations. These are the analogue of Parseval's Theorem for the Fourier transform and are also known as
Moyal's formula for the Wigner distribution. From the orthogonality relations one can also derive an inversion formula
allowing to recover the original signal from the knowledge of its
time-frequency representation. The connection between
these two issues is clarified by the following abstract result.
\begin{theorem}
\label{orthogonality -> inversion}Let $H_{1},H_{2}$ be complex
Hilbert spaces and assume that the members of the family of linear
bounded operators $\left\{ T_{g}:H_{1}\rightarrow H_{2}\,|\,g\in H_{1}\right\} $
satisfy an orthogonality relation of the following type: for any fixed
$g,\gamma\in H_{1}$ there exists $C_{g,\gamma}\in\mathbb{C}$ such
that 
\[
\left\langle T_{g}f,T_{\gamma}h\right\rangle _{H_{2}}=C_{g,\gamma}\left\langle f,h\right\rangle _{H_{1}}\qquad\forall f,h\in H_{1}.
\]

If $C_{g,\gamma}\ne0$, the following inversion formula holds:
\[
f=\frac{1}{C_{g,\gamma}}T_{\gamma}^{*}T_{g}f,\qquad\forall f\in H_{1}.
\]
\end{theorem}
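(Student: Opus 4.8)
The plan is to read the inversion formula directly off the orthogonality relation by invoking the defining property of the Hilbert-space adjoint together with the nondegeneracy of the inner product on $H_1$. Since each $T_\gamma : H_1 \to H_2$ is a bounded linear operator between Hilbert spaces, its adjoint $T_\gamma^* : H_2 \to H_1$ exists, is bounded, and satisfies $\langle T_\gamma^* u, h\rangle_{H_1} = \langle u, T_\gamma h\rangle_{H_2}$ for every $u \in H_2$ and $h \in H_1$; in particular the composition $T_\gamma^* T_g : H_1 \to H_1$ is a well-defined bounded operator, so the right-hand side of the claimed formula is meaningful.

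First I would fix $f, g, \gamma \in H_1$ and specialize the adjoint identity to $u = T_g f$, which gives $\langle T_\gamma^* T_g f, h\rangle_{H_1} = \langle T_g f, T_\gamma h\rangle_{H_2}$ for all $h \in H_1$. Combining this with the hypothesized orthogonality relation yields
\[
\langle T_\gamma^* T_g f, h\rangle_{H_1} = C_{g,\gamma}\, \langle f, h\rangle_{H_1} = \langle C_{g,\gamma} f, h\rangle_{H_1}, \qquad \forall h \in H_1,
\]
so that the vector $w := T_\gamma^* T_g f - C_{g,\gamma} f \in H_1$ is orthogonal to every $h \in H_1$.

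Next I would exploit nondegeneracy: choosing $h = w$ forces $\|w\|_{H_1}^2 = 0$, hence $T_\gamma^* T_g f = C_{g,\gamma} f$. Under the standing hypothesis $C_{g,\gamma} \neq 0$, dividing by this scalar produces exactly $f = C_{g,\gamma}^{-1} T_\gamma^* T_g f$, which is the asserted inversion formula; since $f$ was arbitrary, it holds for all $f \in H_1$.

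Honestly there is no substantial obstacle here: the entire argument is the standard \emph{weak identity plus nondegeneracy implies strong identity} maneuver. The only points deserving care are purely formal, namely confirming that $T_\gamma^*$ denotes the genuine Hilbert-space adjoint (so that the pairing may legitimately be transported between $H_1$ and $H_2$), and that the orthogonality hypothesis is assumed for all $f,h$ simultaneously, which is precisely what licenses the passage from the scalar identity to the operator identity $T_\gamma^* T_g = C_{g,\gamma}\,\mathrm{Id}_{H_1}$.
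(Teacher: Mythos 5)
Your proof is correct and follows essentially the same route as the paper's: transport the pairing via the adjoint, apply the orthogonality relation to get $\left\langle T_{\gamma}^{*}T_{g}f,h\right\rangle_{H_{1}}=C_{g,\gamma}\left\langle f,h\right\rangle_{H_{1}}$ for all $h$, and conclude by nondegeneracy of the inner product. You merely make explicit (via the choice $h=w$) the final step the paper leaves implicit.
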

\begin{proof}
For any $f,h\in H_{1}$ we have 
\[
\left\langle T_{\gamma}^{*}T_{g}f,h\right\rangle _{H_{1}}=\left\langle T_{g}f,T_{\gamma}h\right\rangle _{H_{2}}=C_{g,\gamma}\left\langle f,h\right\rangle _{H_{1}},
\]
hence the claimed formula.

\end{proof}
\begin{remark}
For the sake of completeness, we remark that a similar pathway can
be traced under slightly weaker assumptions, namely for any linear bounded
 operator $T:H_{1}\rightarrow H_{2}$ which is a non-trivial
constant multiple of an isometry: 
\[
\exists C>0\,|\,\left\Vert Tf\right\Vert _{H_{2}}=C\left\Vert f\right\Vert _{H_{1}}.
\]
Indeed, by polarization identity, for any $f,h\in H_{1}$ we have
\begin{flalign*}
\left\langle T^{*}Tf,h\right\rangle _{H_{1}} & =\left\langle Tf,Th\right\rangle _{H_{2}}  =\frac{1}{4}\sum_{\substack{z\in\mathbb{C}\\
z^{4}=1
}
}z\left\Vert Tf+zTh\right\Vert _{H_{2}}^{2}\\
 & =\frac{1}{4}\sum_{\substack{z\in\mathbb{C}\\
z^{4}=1
}
}zC^{2}\left\Vert f+zh\right\Vert _{H_{1}}^{2} =C^{2}\left\langle f,h\right\rangle _{H_{1}}.
\end{flalign*}
Hence, $f=\displaystyle\frac{1}{C^{2}}T^{*}Tf.$
\end{remark}
We then generalize Moyal's formula to MWDs. 
\begin{theorem}[Orthogonality relations]
 Let $A\in\GLL$ and $f_{1},f_{2},g_{1},g_{2}\in L^{2}\left(\mathbb{R}^{d}\right)$.
Then 
\begin{equation}\label{ortrel}
\left\langle \mathcal{B}_{A}\left(f_{1},g_{1}\right),\mathcal{B}_{A}\left(f_{2},g_{2}\right)\right\rangle _{L^{2}\left(\rdd\right)}=\frac{1}{\left|\det A\right|}\left\langle f_{1},f_{2}\right\rangle _{L^{2}\left(\mathbb{R}^{d}\right)}\overline{\left\langle g_{1},g_{2}\right\rangle _{L^{2}\left(\mathbb{R}^{d}\right)}}.
\end{equation}
In particular, 
\[
\left\Vert \mathcal{B}_{A}\left(f,g\right)\right\Vert _{L^{2}\left(\rdd\right)}=\frac{1}{\left|\det A\right|^{1/2}}\left\Vert f\right\Vert _{L^{2}\left(\mathbb{R}^{d}\right)}\left\Vert g\right\Vert _{L^{2}\left(\mathbb{R}^{d}\right)}.
\]
Thus, the representation  $\mathcal{B}_{A,g}:L^{2}\left(\mathbb{R}^{d}\right)\ni f\mapsto\mathcal{B}_{A}\left(f,g\right)\in L^{2}\left(\rdd\right)$
is a non-trivial constant multiple of an isometry whenever $g\not\equiv0$. 
\end{theorem}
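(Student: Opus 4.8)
The plan is to work directly from the defining formula \eqref{BAi}, namely $\mathcal{B}_{A}(f,g)=\mathcal{F}_{2}\mathfrak{T}_{A}(f\otimes\overline{g})$, and to exploit the fact that both building blocks behave well with respect to the $L^{2}$-inner product. Since $\mathcal{F}_{2}$ is an isometry on $L^{2}(\rdd)$ by Lemma \ref{partial Fou isom}(i), it disappears from the inner product of two MWDs, so the whole computation reduces to understanding how $\mathfrak{T}_{A}$ acts on the tensor products $f_{i}\otimes\overline{g_{i}}$. Concretely, I would begin by writing
\[
\left\langle \mathcal{B}_{A}(f_{1},g_{1}),\mathcal{B}_{A}(f_{2},g_{2})\right\rangle =\left\langle \mathcal{F}_{2}\mathfrak{T}_{A}(f_{1}\otimes\overline{g_{1}}),\mathcal{F}_{2}\mathfrak{T}_{A}(f_{2}\otimes\overline{g_{2}})\right\rangle =\left\langle \mathfrak{T}_{A}(f_{1}\otimes\overline{g_{1}}),\mathfrak{T}_{A}(f_{2}\otimes\overline{g_{2}})\right\rangle .
\]

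The key step is the identity $\mathfrak{T}_{A}^{*}\mathfrak{T}_{A}=\left|\det A\right|^{-1}\mathrm{Id}$ on $L^{2}(\rdd)$, which follows immediately from Lemma \ref{coord trans isom}(i): since $\mathfrak{T}_{A}^{*}=\left|\det A\right|^{-1}\mathfrak{T}_{A^{-1}}$ and $\mathfrak{T}_{A^{-1}}\mathfrak{T}_{A}=\mathfrak{T}_{A^{-1}A}=\mathrm{Id}$, composing gives the scalar multiple of the identity. Applying this to the two factors yields
\[
\left\langle \mathfrak{T}_{A}(f_{1}\otimes\overline{g_{1}}),\mathfrak{T}_{A}(f_{2}\otimes\overline{g_{2}})\right\rangle =\left|\det A\right|^{-1}\left\langle f_{1}\otimes\overline{g_{1}},f_{2}\otimes\overline{g_{2}}\right\rangle _{L^{2}(\rdd)},
\]
so the problem is now entirely reduced to factoring the inner product of the two tensor products, and the factor $\left|\det A\right|^{-1}$ that appears in the statement is exactly accounted for.

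The remaining, and genuinely only delicate, step is to evaluate $\left\langle f_{1}\otimes\overline{g_{1}},f_{2}\otimes\overline{g_{2}}\right\rangle$, where care with the complex conjugation is essential because the inner product is conjugate-linear in the second entry. Integrating over $\rdd$ and separating variables I would obtain $\int f_{1}(x)\overline{f_{2}(x)}\,dx$ against $\int \overline{g_{1}(y)}g_{2}(y)\,dy$; the first factor is $\langle f_{1},f_{2}\rangle$, while the second is $\overline{\langle g_{1},g_{2}\rangle}$ precisely because the two conjugations land on the $g$-entries in the opposite arrangement. This is the point where the conjugate on the $g$-inner product in \eqref{ortrel} is born, and it is the one place to avoid a sign/conjugation slip. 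I expect this to be the main (mild) obstacle, since the rest is structural.

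Finally, specializing to $f_{1}=f_{2}=f$ and $g_{1}=g_{2}=g$ gives $\left\Vert \mathcal{B}_{A}(f,g)\right\Vert_{2}^{2}=\left|\det A\right|^{-1}\left\Vert f\right\Vert_{2}^{2}\left\Vert g\right\Vert_{2}^{2}$, hence the stated norm identity after taking square roots. Consequently, for fixed $g\not\equiv0$ the map $\mathcal{B}_{A,g}\colon f\mapsto\mathcal{B}_{A}(f,g)$ satisfies $\left\Vert \mathcal{B}_{A,g}f\right\Vert_{2}=C\left\Vert f\right\Vert_{2}$ with $C=\left|\det A\right|^{-1/2}\left\Vert g\right\Vert_{2}>0$, i.e.\ it is a non-trivial constant multiple of an isometry, which also places it within the scope of Theorem \ref{orthogonality -> inversion} and the subsequent inversion formula.
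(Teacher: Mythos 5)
Your proposal is correct and follows essentially the same route as the paper: both factor $\mathcal{B}_{A}=\mathcal{F}_{2}\mathfrak{T}_{A}$, use the unitarity of $\mathcal{F}_{2}$ together with the fact that $\mathfrak{T}_{A}$ is unitary up to the factor $\left|\det A\right|^{-1/2}$ (your identity $\mathfrak{T}_{A}^{*}\mathfrak{T}_{A}=\left|\det A\right|^{-1}\mathrm{Id}$ is exactly this, made explicit via Lemma \ref{coord trans isom}), and then factor the tensor-product inner product, with the conjugation on $\left\langle g_{1},g_{2}\right\rangle$ handled correctly. The only difference is that you spell out the adjoint computation that the paper states in one line, so there is nothing to amend.
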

\begin{proof}
Since $\mathcal{F}_{2}$ is a unitary operator on $L^{2}\left(\mathbb{R}^{d}\right)$
and $\mathfrak{T}_{A}$ is unitary up to the constant factor $\left|\det A\right|^{-1/2}$,
we have 
\begin{flalign*}
\left\langle \mathcal{B}_{A}\left(f_{1},g_{1}\right),\mathcal{B}_{A}\left(f_{2},g_{2}\right)\right\rangle  & =\left\langle \mathcal{F}_{2}\mathfrak{T}_{A}\left(f_{1}\otimes\overline{g_{1}}\right),\mathcal{F}_{2}\mathfrak{T}_{A}\left(f_{2}\otimes\overline{g_{2}}\right)\right\rangle \\
 & =\frac{1}{\left|\det A\right|}\left\langle f_{1}\otimes\overline{g_{1}},f_{2}\otimes\overline{g_{2}}\right\rangle \\
 & =\frac{1}{\left|\det A\right|}\left\langle f_{1},f_{2}\right\rangle \overline{\left\langle g_{1},g_{2}\right\rangle }.
\end{flalign*}
\end{proof}
\begin{corollary}
If $\left(e_{n}\right)_{n\in\mathbb{N}}$ is an orthonormal basis
for $L^{2}\left(\mathbb{R}^{d}\right)$, then \[\left\{ \left|\det A\right|^{1/2}\mathcal{B}_{A}\left(e_{m},e_{n}\right)\,|\,m,n\in\mathbb{N}\right\}\]
is an orthonormal basis for $L^{2}\left(\rdd\right)$.
\end{corollary}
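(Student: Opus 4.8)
The plan is to deduce the orthonormal basis statement directly from the orthogonality relations \eqref{ortrel} just established. The idea is that the normalized family behaves exactly like the tensor products $e_m \otimes \overline{e_n}$ under the isometry structure provided by $\mc{B}_A$.

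\smallskip

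First I would verify \emph{orthonormality}. Applying \eqref{ortrel} with $f_1 = e_m$, $g_1 = e_n$, $f_2 = e_{m'}$, $g_2 = e_{n'}$ gives
\[
\left\langle |\det A|^{1/2}\mc{B}_A(e_m,e_n),\, |\det A|^{1/2}\mc{B}_A(e_{m'},e_{n'})\right\rangle_{L^2(\rdd)} = \langle e_m,e_{m'}\rangle\,\overline{\langle e_n,e_{n'}\rangle} = \delta_{mm'}\delta_{nn'},
\]
which is precisely orthonormality of the normalized family indexed by $(m,n)\in\bN\times\bN$.

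\smallskip

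Next I would establish \emph{completeness}, which I expect to be the main (though still routine) obstacle. The cleanest route is to invoke Proposition \ref{def bilA triple}(i), which asserts that $\mathrm{span}\{\mc{B}_A(f,g) : f,g\in L^2\}$ is dense in $L^2(\rdd)$. It therefore suffices to show that each $\mc{B}_A(f,g)$ lies in the closed span of $\{\mc{B}_A(e_m,e_n)\}$. Since $(e_n)$ is an orthonormal basis, one writes $f = \sum_m \langle f,e_m\rangle e_m$ and $g = \sum_n \langle g,e_n\rangle e_n$ with convergence in $L^2(\rd)$; by the continuity of the bilinear map $\mc{B}_A : L^2(\rd)\times L^2(\rd)\to L^2(\rdd)$ (again Proposition \ref{def bilA triple}(i)) and bilinearity, one obtains
\[
\mc{B}_A(f,g) = \sum_{m,n} \langle f,e_m\rangle\,\overline{\langle g,e_n\rangle}\,\mc{B}_A(e_m,e_n),
\]
with convergence in $L^2(\rdd)$. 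Thus every element of the dense spanning set lies in $\overline{\mathrm{span}}\{\mc{B}_A(e_m,e_n)\}$, forcing this closed span to be all of $L^2(\rdd)$. An orthonormal system whose closed span is the whole space is an orthonormal basis, completing the argument.

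\smallskip

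Alternatively, completeness can be argued abstractly: the map $e_m\otimes\overline{e_n}\mapsto |\det A|^{1/2}\mc{B}_A(e_m,e_n)$ extends (via \eqref{ortrel}) to an isometry on $L^2(\rdd)$, and since $\{e_m\otimes\overline{e_n}\}$ is an orthonormal basis of $L^2(\rdd)$ while $|\det A|^{-1/2}\mc{B}_{A}$ is surjective onto a dense subspace, the image is a complete orthonormal system. Either way, the essential content is that \eqref{ortrel} transfers the basis property of $\{e_m\otimes\overline{e_n}\}$ through the (rescaled) isometry $\mc{B}_A$.
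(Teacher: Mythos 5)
Your proof is correct and follows essentially the same route as the paper: orthonormality is read off from the orthogonality relations \eqref{ortrel}, and completeness comes from the density of $\mathrm{span}\left\{ \mathcal{B}_{A}(f,g)\right\}$ in $L^{2}(\rdd)$ together with the continuity of the bilinear map from Proposition \ref{def bilA triple}(i). The paper merely asserts the completeness of the span in one line, so your expansion $\mathcal{B}_{A}(f,g)=\sum_{m,n}\langle f,e_{m}\rangle\,\overline{\langle g,e_{n}\rangle}\,\mathcal{B}_{A}(e_{m},e_{n})$ (correctly accounting for conjugate-linearity in the second argument) simply fills in the detail the authors leave implicit.
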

\begin{proof}
From orthogonality relation we have
\[
\left\langle \left|\det A\right|^{1/2}\mathcal{B}_{A}\left(e_{m},e_{n}\right),\left|\det A\right|^{1/2}\mathcal{B}_{A}\left(e_{i},e_{j}\right)\right\rangle =\left\langle e_{m},e_{i}\right\rangle \overline{\left\langle e_{n},e_{j}\right\rangle }=\delta_{m,i}\delta_{n,j}.
\]
This proves that $\left\{ \left|\det A\right|^{1/2}\mathcal{B}_{A}\left(e_{m},e_{n}\right)\,|\,m,n\in\mathbb{N}\right\} $
is an orthonormal family in $L^{2}\left(\mathbb{R}^{d}\right)$, its
span being a complete subset of $L^{2}\left(\mathbb{R}^{d}\right)$,
hence the thesis. 
\end{proof}
Before establishing an inversion formula, it is convenient to explicitly
characterize the adjoint of $\mathcal{B}_{A}\left(f,g\right)$. 
\begin{proposition}
Let $A\in\GLL$ and fix $g\in L^{2}\left(\mathbb{R}^{d}\right)$.
Then, 
\[
\mathcal{B}_{A,g}^{*}:L^{2}\left(\rdd\right)\rightarrow L^{2}\left(\mathbb{R}^{d}\right),\qquad\mathcal{B}_{A,g}^{*}H\left(x\right)=\frac{1}{\left|\det A\right|}\int_{\mathbb{R}^{d}}\mathfrak{T}_{A^{\star}}\mathcal{F}_{2}H\left(x,y\right)g\left(y\right)dy,
\]
where 
\[
A^{\star}=A^{-1}\mathcal{I}_{2}\in\GLL.
\]
\end{proposition}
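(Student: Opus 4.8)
The plan is to read the claimed formula off the defining identity of the Hilbert-space adjoint, namely
\[
\langle \mathcal{B}_{A,g} f, H \rangle_{L^{2}(\rdd)} = \langle f, \mathcal{B}_{A,g}^{*} H \rangle_{L^{2}(\rd)}, \qquad \forall f \in L^{2}(\rd),\ H \in L^{2}(\rdd).
\]
First I would expand the left-hand side through the factorization $\mathcal{B}_{A,g} f = \mathcal{F}_{2}\mathfrak{T}_{A}(f\otimes\overline{g})$ of Definition \ref{biltfr} and move the two operators across the inner product by means of their adjoints. Since $\mathcal{F}_{2}$ is unitary (Lemma \ref{partial Fou isom}) and $\mathfrak{T}_{A}^{*} = |\det A|^{-1}\mathfrak{T}_{A^{-1}}$ (Lemma \ref{coord trans isom}), this gives
\[
\langle \mathcal{B}_{A,g} f, H \rangle = \langle \mathfrak{T}_{A}(f\otimes\overline{g}), \mathcal{F}_{2}^{-1}H \rangle = \frac{1}{|\det A|}\,\langle f\otimes\overline{g}, \mathfrak{T}_{A^{-1}}\mathcal{F}_{2}^{-1}H \rangle.
\]

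Next I would unfold the tensor-product inner product as an integral over $\rdd$ and integrate in the variable $y$. Writing $G = \mathfrak{T}_{A^{-1}}\mathcal{F}_{2}^{-1}H$, the right-hand side becomes $\frac{1}{|\det A|}\int_{\rdd} f(x)\,\overline{g(y)}\,\overline{G(x,y)}\,dx\,dy$; comparing this with $\langle f, \mathcal{B}_{A,g}^{*}H \rangle = \int_{\rd} f(x)\,\overline{\mathcal{B}_{A,g}^{*}H(x)}\,dx$ and stripping off the arbitrary $f$ identifies
\[
\mathcal{B}_{A,g}^{*}H(x) = \frac{1}{|\det A|}\int_{\rd} \mathfrak{T}_{A^{-1}}\mathcal{F}_{2}^{-1}H(x,y)\,g(y)\,dy.
\]

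Finally I would rewrite $\mathfrak{T}_{A^{-1}}\mathcal{F}_{2}^{-1}$ in the stated form. Using $\mathcal{F}_{2}^{-1} = \mathfrak{T}_{\mathcal{I}_{2}}\mathcal{F}_{2}$ from Lemma \ref{partial Fou isom} together with the composition rule $\mathfrak{T}_{M}\mathfrak{T}_{N} = \mathfrak{T}_{MN}$, one obtains $\mathfrak{T}_{A^{-1}}\mathcal{F}_{2}^{-1} = \mathfrak{T}_{A^{-1}}\mathfrak{T}_{\mathcal{I}_{2}}\mathcal{F}_{2} = \mathfrak{T}_{A^{-1}\mathcal{I}_{2}}\mathcal{F}_{2} = \mathfrak{T}_{A^{\star}}\mathcal{F}_{2}$ with $A^{\star} = A^{-1}\mathcal{I}_{2}$, which lies in $\GLL$ since both factors are invertible. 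This is precisely the asserted formula. Throughout, the manipulations are legitimate for $L^{2}$ arguments (justifying Fubini and the integral representations on Schwartz functions first and extending by density and the boundedness of $\mathcal{B}_{A,g}$ if one wishes to be fully rigorous).

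The computation is essentially routine; the step I expect to be the main pitfall is the bookkeeping of complex conjugation. Because the $L^{2}$ inner product is conjugate-linear in its second slot, the conjugate $\overline{g}$ appearing in $f\otimes\overline{g}$ is conjugated again when one passes to $\mathcal{B}_{A,g}^{*}H$, so the final integral pairs $G(x,y)$ with $g(y)$ rather than $\overline{g(y)}$; a stray conjugation here would yield the wrong kernel. A secondary, immediate point is to record that $A^{\star}=A^{-1}\mathcal{I}_{2}$ is invertible, so that $\mathfrak{T}_{A^{\star}}$ is a well-defined isomorphism.
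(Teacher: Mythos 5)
Your proposal is correct and follows essentially the same route as the paper's own proof: both unfold $\left\langle \mathcal{B}_{A,g}f,H\right\rangle$ by moving $\mathcal{F}_{2}$ and $\mathfrak{T}_{A}$ across the inner product via Lemmas \ref{partial Fou isom} and \ref{coord trans isom}, compose $\mathfrak{T}_{A^{-1}}\mathfrak{T}_{\mathcal{I}_{2}}=\mathfrak{T}_{A^{-1}\mathcal{I}_{2}}=\mathfrak{T}_{A^{\star}}$, and conclude by Fubini's theorem. Your conjugation bookkeeping is also right: since the inner product is conjugate-linear in the second slot, the kernel pairs $\mathfrak{T}_{A^{\star}}\mathcal{F}_{2}H\left(x,y\right)$ with $g\left(y\right)$ rather than $\overline{g\left(y\right)}$, exactly as in the stated formula.
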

\begin{proof}
Set for convenience 
\[
h\left(x\right)=\frac{1}{\left|\det A\right|}\int_{\mathbb{R}^{d}}\mathfrak{T}_{A^{\star}}\mathcal{F}_{2}H\left(x,y\right)g\left(y\right)dy,
\]
and notice that if $H\in L^{2}\left(\rdd\right)$ then
$h\in L^{2}\left(\mathbb{R}^{d}\right)$. Let $f\in L^{2}\left(\mathbb{R}^{d}\right)$
and $H\in L^{2}\left(\mathbb{R}^{d}\right)$, then

\begin{flalign*}
\left\langle \mathcal{B}_{A,g}f,H\right\rangle  & =\left\langle \mathcal{F}_{2}\mathfrak{T}_{A}\left(f\otimes\overline{g}\right),H\right\rangle \\
 & =\left\langle \mathfrak{T}_{A}\left(f\otimes\overline{g}\right),\mathcal{F}_{2}^{*}H\right\rangle \\
 & =\left\langle \mathfrak{T}_{A}\left(f\otimes\overline{g}\right),\mathfrak{T}_{\mathcal{I}_{2}}\mathcal{F}_{2}H\right\rangle \\
 & =\left\langle f\otimes\overline{g},\left|\det A\right|^{-1}\mathfrak{T}_{A^{-1}}\mathfrak{T}_{\mathcal{I}_{2}}\mathcal{F}_{2}H\right\rangle \\
 & =\left\langle f\otimes\overline{g},\left|\det A\right|^{-1}\mathfrak{T}_{A^{\star}}\mathcal{F}_{2}H\right\rangle \\
 & =\left\langle f,h\right\rangle ,
\end{flalign*}
where the last equality follows from Fubini's theorem. 
\end{proof}
\begin{corollary}[Inversion formula for bilinear TF representations]

Assume $A\in\GLL$ and fix $g,\gamma\in L^{2}\left(\mathbb{R}^{d}\right)$
such that $\left\langle g,\gamma\right\rangle \ne0$. Then, for any
$f\in L^{2}\left(\mathbb{R}^{d}\right)$, the following inversion
formula holds:
\[
f=\frac{\left|\det A\right|}{\overline{\left\langle g,\gamma\right\rangle }}\mathcal{B}_{A,\gamma}^{*}\mathcal{B}_{A,g}f.
\]
\end{corollary}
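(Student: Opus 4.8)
The plan is to obtain this purely as a specialization of the abstract scheme in Theorem~\ref{orthogonality -> inversion}, fed with the orthogonality relations \eqref{ortrel} and the explicit adjoint formula established in the preceding proposition. First I would set $H_1=L^2(\rd)$, $H_2=L^2(\rdd)$ and take the family of operators to be $T_g\coloneqq\mathcal{B}_{A,g}$, so that $T_gf=\mathcal{B}_A(f,g)$ for each fixed window $g\in L^2(\rd)$. Each $T_g$ is linear and bounded: this is exactly the content of Proposition~\ref{def bilA triple}(i) (equivalently, it follows from the norm identity in the orthogonality relations theorem, which shows $\|T_gf\|_{L^2(\rdd)}=|\det A|^{-1/2}\|g\|_{L^2(\rd)}\|f\|_{L^2(\rd)}$).

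Next I would read off the constant $C_{g,\gamma}$ required by the abstract hypothesis. Specializing \eqref{ortrel} to $f_1=f$, $g_1=g$, $f_2=h$, $g_2=\gamma$ gives
\[
\langle T_gf,T_\gamma h\rangle_{L^2(\rdd)}=\langle\mathcal{B}_A(f,g),\mathcal{B}_A(h,\gamma)\rangle_{L^2(\rdd)}=\frac{1}{|\det A|}\,\overline{\langle g,\gamma\rangle}\,\langle f,h\rangle_{L^2(\rd)},
\]
valid for all $f,h\in L^2(\rd)$. This is precisely the orthogonality-type relation demanded by Theorem~\ref{orthogonality -> inversion}, with the scalar identified as $C_{g,\gamma}=|\det A|^{-1}\overline{\langle g,\gamma\rangle}$.

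Finally, since $\langle g,\gamma\rangle\ne0$ and complex conjugation preserves non\-vanishing, we have $C_{g,\gamma}\ne0$; hence Theorem~\ref{orthogonality -> inversion} applies and delivers
\[
f=\frac{1}{C_{g,\gamma}}\,T_\gamma^*T_gf=\frac{|\det A|}{\overline{\langle g,\gamma\rangle}}\,\mathcal{B}_{A,\gamma}^*\,\mathcal{B}_{A,g}f,
\]
which is the asserted inversion formula. I do not expect any genuine obstacle here: the analytic substance was already carried by the orthogonality relation and by the closed form of $\mathcal{B}_{A,\gamma}^*$ computed just above. The only points that need care are bookkeeping: matching the slots correctly (the first argument is the free variable acted on by $T$, while the second is the fixed window), and tracking the conjugation so that the constant emerges as $\overline{\langle g,\gamma\rangle}$ rather than $\langle g,\gamma\rangle$, which is what produces the factor $1/\overline{\langle g,\gamma\rangle}$ in the final expression.
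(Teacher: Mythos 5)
Your proposal is correct and coincides with the paper's own proof: the authors likewise invoke Theorem \ref{orthogonality -> inversion} with $T_{g}=\mathcal{B}_{A,g}$ and $C_{g,\gamma}=\left|\det A\right|^{-1}\overline{\left\langle g,\gamma\right\rangle}$, obtained by specializing the orthogonality relations \eqref{ortrel} exactly as you do. Your write-up merely makes explicit the slot-matching and the identification of the constant, which the paper leaves implicit.
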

\begin{proof}
It is an immediate consequence of the general result in Theorem \ref{orthogonality -> inversion}
with $T_{g}=\mathcal{B}_{A,g}$ and $C_{g,\gamma}=\left|\det A\right|^{-1}\overline{\left\langle g,\gamma\right\rangle }$. 
\end{proof}
Under more restrictive assumptions, a pointwise inversion formula
can be provided without resorting to the adjoint operator. First,
notice that $\mathcal{B}_{A}f$ determines $f$ only up to a phase
factor: whenever $c\in\mathbb{C}$, $\left|c\right|=1$, we have 
\[
\mathcal{B}_{A}\left(cf\right)=\left|c\right|^{2}\mathcal{B}_{A}\left(f\right)=\mathcal{B}_{A}\left(f\right).
\]

\begin{theorem}[Pointwise inversion formula]

Assume $A\in\GLL$ and set 
\[
A^{-1}=\left(\begin{array}{cc}
(A^{-1})_{11} & (A^{-1})_{12}\\
(A^{-1})_{21} & (A^{-1})_{22}
\end{array}\right).
\]
For any $f\in\mathcal{S}\left(\mathbb{R}^{d}\right)$ such that $f\left(0\right)\ne0$,
we have
\[
f\left(x\right)=\frac{1}{\overline{f\left(0\right)}}\int_{\mathbb{R}^{d}}e^{2\pi i(A^{-1})_{21}x\cdot\omega}\mathcal{B}_{A}f\left((A^{-1})_{11}x,\omega\right)d\omega.
\]
All other solutions have the form $cf$, where $c\in\mathbb{C}$,
$\left|c\right|=1$. 
\end{theorem}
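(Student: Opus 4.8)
The plan is to exploit the definition $\mathcal{B}_{A}f = \mathcal{F}_{2}\mathfrak{T}_{A}(f\otimes\overline{f})$ and simply invert the partial Fourier transform $\mathcal{F}_{2}$ in the second variable. Writing $G = \mathfrak{T}_{A}(f\otimes\overline{f})$, so that
\[
G(x,y) = f(A_{11}x+A_{12}y)\,\overline{f(A_{21}x+A_{22}y)},
\]
the definition reads $\mathcal{B}_{A}f(x,\omega) = \widehat{G_{x}}(\omega)$, the Fourier transform of the section $G_{x}(y) = G(x,y)$. Since $f\in\mathcal{S}(\mathbb{R}^{d})$, Lemma \ref{coord trans isom}(ii) guarantees $G\in\mathcal{S}(\rdd)$, hence each section $G_{x}$ is Schwartz and Fourier inversion applies pointwise. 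With the normalization of the excerpt this gives, for every $x',y'\in\mathbb{R}^{d}$,
\[
f(A_{11}x'+A_{12}y')\,\overline{f(A_{21}x'+A_{22}y')} = \int_{\mathbb{R}^{d}} e^{2\pi i y'\cdot\omega}\,\mathcal{B}_{A}f(x',\omega)\,d\omega.
\]

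Next I would choose $(x',y')$ so that the two arguments of $f$ collapse to $x$ and $0$ respectively, i.e.\ I solve $A(x',y')^{\top} = (x,0)^{\top}$. By the block decomposition of $A^{-1}$ this yields $x' = (A^{-1})_{11}x$ and $y' = (A^{-1})_{21}x$. Substituting these values makes the left-hand side equal to $f(x)\,\overline{f(0)}$ and the exponent equal to $(A^{-1})_{21}x\cdot\omega$; dividing by $\overline{f(0)}\neq 0$ produces exactly the claimed formula. The only point requiring a (routine) justification is the convergence of the $\omega$-integral and the pointwise validity of Fourier inversion, both immediate from $\mathcal{B}_{A}f(x,\cdot)\in\mathcal{S}(\mathbb{R}^{d})$; this is precisely why the hypothesis $f\in\mathcal{S}$ is imposed.

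For the uniqueness statement I would argue as follows. Suppose $h\in\mathcal{S}(\mathbb{R}^{d})$ satisfies $\mathcal{B}_{A}h = \mathcal{B}_{A}f$. The integral on the right-hand side of the inversion formula depends on $h$ only through $\mathcal{B}_{A}h$, so it is unchanged when $f$ is replaced by $h$; thus evaluating the formula for $h$ gives $h(x)\,\overline{h(0)} = f(x)\,\overline{f(0)}$ for all $x$, whence $h = c f$ with $c = \overline{f(0)}/\overline{h(0)}$. That $h(0)\neq 0$ (needed to apply the formula to $h$) follows because the common integral equals $f(x)\overline{f(0)}$, which is nonzero for $x$ near $0$. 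Finally, setting $x = 0$ in $h = cf$ forces $h(0) = c f(0)$, and combining with $c = \overline{f(0)}/\overline{h(0)}$ gives $|h(0)| = |f(0)|$, hence $|c| = 1$. Conversely every $cf$ with $|c| = 1$ has the same MWD by the relation $\mathcal{B}_{A}(cf) = |c|^{2}\mathcal{B}_{A}f = \mathcal{B}_{A}f$ recorded just before the statement, so these are exactly the solutions. The whole argument is elementary; I expect no genuine obstacle beyond carefully tracking the block structure of $A^{-1}$ in the change of variables and invoking $f\in\mathcal{S}$ to legitimize the inversion.
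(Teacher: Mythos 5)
Your proof is correct and follows essentially the same route as the paper: both invert $\mathcal{F}_{2}$ and the coordinate change $\mathfrak{T}_{A}$ to recover $f(x)\overline{f(y)}$ pointwise and then specialize the second variable to $0$ (your choice of $(x',y')$ with $A(x',y')^{\top}=(x,0)^{\top}$ is exactly the paper's application of $\mathfrak{T}_{A^{-1}}$ followed by setting $y=0$). Your explicit verification of the uniqueness-up-to-phase statement, including the observation that $h(0)\ne 0$ follows from evaluating the common identity at $x=0$, is a small bonus: the paper leaves that part to the remark $\mathcal{B}_{A}(cf)=\left|c\right|^{2}\mathcal{B}_{A}f=\mathcal{B}_{A}f$ preceding the theorem.
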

\begin{proof}
By inverting the operators $\mathcal{F}_{2}$ and $\mathfrak{T}_{A}$,
we have

\begin{align*}
f\left(x\right)\overline{f\left(y\right)} & =\left(\mathfrak{T}_{A^{-1}}\mathcal{F}_{2}^{-1}\mathcal{B}_{A}f\right)\left(x,y\right) \\
& =\int_{\mathbb{R}^{d}}e^{2\pi i\left((A^{-1})_{21}x+(A^{-1})_{22}y\right)\omega}\mathcal{B}_{A}f\left((A^{-1})_{11}x+(A^{-1})_{12}y,\omega\right)d\omega.
\end{align*}
Setting $y=0$ gives the desired formula.
\end{proof}
We conclude this section by providing an inversion formula for representations
associated with right-regular matrices. The easy proof is left to
the interested reader. 
\begin{proposition}
Let $A\in\GLL$ be a right-regular
matrix, and $g,\gamma\in L^{2}\left(\mathbb{R}^{d}\right)$ such that
$\left\langle g,\gamma\right\rangle \ne0$. The following inversion
formula (to be interpreted as vector-valued integral in $L^{2}\left(\mathbb{R}^{d}\right)$)
holds for any $f\in L^{2}\left(\mathbb{R}^{d}\right)$:
\[
f=\frac{1}{\left\langle g,\gamma\right\rangle }\int_{\rdd}\mathcal{B}_{A,\gamma}f\left(x,\omega\right)\frac{e^{-2\pi iA_{12}^{\#}\omega\cdot A_{11}x}}{\left|\det A_{12}\right|}M_{d\left(\omega\right)}T_{c\left(x\right)}\tilde{g}dxd\omega,
\]
where
\[
c\left(x\right)=\left(A_{11}-A_{12}A_{22}^{-1}A_{21}\right)x,\qquad d\left(\omega\right)=A_{12}^{\#}\omega,\qquad\tilde{g}\left(t\right)=g\left(A_{22}A_{12}^{-1}t\right).
\]
\end{proposition}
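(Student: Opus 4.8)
The plan is to reduce the statement to the classical weak (vector-valued) inversion formula for the short-time Fourier transform, exploiting the fact that for right-regular $A$ the representation $\mathcal{B}_{A,\gamma}$ is merely a dilated and modulated STFT, as established in Theorem \ref{right-reg rep}. Applying that theorem with the window $\gamma$ in place of $g$ gives
\[
\mathcal{B}_{A,\gamma}f(x,\omega)=|\det A_{12}|^{-1}e^{2\pi i A_{12}^{\#}\omega\cdot A_{11}x}\,V_{\tilde\gamma}f\big(c(x),d(\omega)\big),
\]
where $\tilde\gamma(t)=\gamma(A_{22}A_{12}^{-1}t)$ and $c,d$ are as in the statement. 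I would insert this into the right-hand side of the claimed formula. The chirp $e^{2\pi i A_{12}^{\#}\omega\cdot A_{11}x}$ cancels exactly against the factor $e^{-2\pi i A_{12}^{\#}\omega\cdot A_{11}x}$ already present, so the integral to be evaluated reduces to
\[
|\det A_{12}|^{-2}\int_{\rdd}V_{\tilde\gamma}f\big(c(x),d(\omega)\big)\,M_{d(\omega)}T_{c(x)}\tilde g\,dx\,d\omega .
\]

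Next I would carry out the linear change of variables $u=c(x)=(A_{11}-A_{12}A_{22}^{-1}A_{21})x$ and $v=d(\omega)=A_{12}^{\#}\omega$, both invertible because $A$ is right-regular and belongs to $\GLL$. This turns the integrand into $V_{\tilde\gamma}f(u,v)\,M_v T_u\tilde g$, so that, up to an explicit scalar, one recognizes the STFT reconstruction integral $\int_{\rdd}V_{\tilde\gamma}f(u,v)\,M_v T_u\tilde g\,du\,dv$. By the weak inversion formula for the STFT (cf. \cite{Grochenig_2001_Foundations}) this integral equals $\langle\tilde g,\tilde\gamma\rangle\,f$ in the $L^2(\rd)$ vector-valued sense; the hypothesis $\langle g,\gamma\rangle\ne0$ will ensure $\langle\tilde g,\tilde\gamma\rangle\ne0$, which is precisely what legitimizes this step.

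Finally I would collect the scalar factors. The two Jacobians are controlled by $\det A_{12}^{\#}=1/\det A_{12}$ and by the Schur-complement identity $\det(A_{11}-A_{12}A_{22}^{-1}A_{21})=\det A/\det A_{22}$ already exploited in the proof of Proposition \ref{right-regular continuity}, while the window overlap transforms under the dilation $t\mapsto A_{22}A_{12}^{-1}t$ as $\langle\tilde g,\tilde\gamma\rangle=(|\det A_{12}|/|\det A_{22}|)\langle g,\gamma\rangle$. The heart of the argument — and the only place where care is genuinely needed — is the bookkeeping that multiplies together the prefactor $|\det A_{12}|^{-2}$, the two Jacobian determinants, and the dilation constant: these must collapse, via $|\det(A_{11}-A_{12}A_{22}^{-1}A_{21})|\,|\det A_{22}|=|\det A|$, into a single multiple of $\langle g,\gamma\rangle$. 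Dividing the integral by that constant then recovers $f$, completing the proof; all the identities above are to be read in the weak vector-valued sense, exactly as in the standard treatment of STFT inversion.
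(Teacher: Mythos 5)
Your reduction to STFT inversion via Theorem \ref{right-reg rep} is the natural route here (the paper explicitly leaves this proof to the reader), and every individual ingredient you cite is correct: the chirp cancellation leaving $\left|\det A_{12}\right|^{-2}$, the invertibility of $c$ and $d$, the overlap identity $\left\langle \tilde{g},\tilde{\gamma}\right\rangle =\left(\left|\det A_{12}\right|/\left|\det A_{22}\right|\right)\left\langle g,\gamma\right\rangle$, and the Schur identity $\left|\det\left(A_{11}-A_{12}A_{22}^{-1}A_{21}\right)\right|\left|\det A_{22}\right|=\left|\det A\right|$. The problem is the one step you defer, which you yourself call the heart of the argument: the constants do \emph{not} collapse as you assert. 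Carrying out the change of variables gives $dx=\left(\left|\det A_{22}\right|/\left|\det A\right|\right)du$ and $d\omega=\left|\det A_{12}\right|dv$, so the right-hand side of the claimed formula equals
\[
\frac{1}{\left\langle g,\gamma\right\rangle }\cdot\frac{1}{\left|\det A_{12}\right|^{2}}\cdot\frac{\left|\det A_{22}\right|}{\left|\det A\right|}\cdot\left|\det A_{12}\right|\cdot\left\langle \tilde{g},\tilde{\gamma}\right\rangle f=\frac{1}{\left\langle g,\gamma\right\rangle }\cdot\frac{\left|\det A_{22}\right|}{\left|\det A_{12}\right|\left|\det A\right|}\cdot\frac{\left|\det A_{12}\right|}{\left|\det A_{22}\right|}\left\langle g,\gamma\right\rangle f=\frac{1}{\left|\det A\right|}f,
\]
not $f$: the scalars collapse to $\left\langle g,\gamma\right\rangle /\left|\det A\right|$ rather than to $\left\langle g,\gamma\right\rangle$.

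This leftover $1/\left|\det A\right|$ is not an artifact of your method. Pairing the integral weakly against $h\in L^{2}\left(\rd\right)$ and using Theorem \ref{right-reg rep} with window $g$ to recognize $\overline{\mathcal{B}_{A}\left(h,g\right)\left(x,\omega\right)}$ in the integrand, Moyal's relation \eqref{ortrel} gives $\left\langle \mathcal{B}_{A}\left(f,\gamma\right),\mathcal{B}_{A}\left(h,g\right)\right\rangle =\left|\det A\right|^{-1}\left\langle f,h\right\rangle \left\langle g,\gamma\right\rangle$, the same constant; a one-line sanity check with $d=1$, $A=\left(\begin{array}{cc}0 & 2\\ -1 & 1\end{array}\right)$, $\det A=2$, also yields $f/2$. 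So the proposition as printed is exact only when $\left|\det A\right|=1$ (which covers $A_{ST}$, $A_{\tau}$ and the Cohen-type matrices $A_{M}$, all of determinant of modulus one, presumably why the factor went unnoticed), and in general the prefactor must be $\left|\det A\right|/\left\langle g,\gamma\right\rangle$ --- consistent with the paper's adjoint-based inversion $f=\frac{\left|\det A\right|}{\overline{\left\langle g,\gamma\right\rangle }}\mathcal{B}_{A,\gamma}^{*}\mathcal{B}_{A,g}f$, where $\left|\det A\right|$ does appear. As written, your proof asserts rather than computes the collapse and therefore does not establish the stated formula; to repair it, perform the bookkeeping explicitly and either insert the factor $\left|\det A\right|$ (flagging the misprint in the statement) or add the hypothesis $\left|\det A\right|=1$.
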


\subsection{Covariance and short-time product formulas}

A key property for a time-frequency distribution is its behaviour
under the action of time-frequency shifts. We prove a covariance formula
for MWDs. 
\begin{theorem}[Covariance formula]
\label{cov formula}Let $A\in\GLL$.
For any $f,g\in L^{2}\left(\mathbb{R}^{d}\right)$ and $a,b,\alpha,\beta\in\mathbb{R}^{d}$,
the following formula holds:
\begin{align}\label{covform}
\mathcal{B}_{A}\left(M_{\alpha}T_{a}f,M_{\beta}T_{b}g\right)\left(x,\omega\right)& =e^{2\pi i\sigma s}M_{\left(\rho,-s\right)}T_{\left(r,\sigma\right)}\mathcal{B}_{A}\left(f,g\right)\left(x,\omega\right) \\
& =e^{2\pi i\sigma s}e^{2\pi i\left(x\rho-\omega s\right)}\mathcal{B}_{A}\left(f,g\right)\left(x-r,\omega-\sigma\right),
\end{align}

where 
\[
\left(\begin{array}{c}
r\\
s
\end{array}\right)=A^{-1}\left(\begin{array}{c}
a\\
b
\end{array}\right),\qquad\left(\begin{array}{c}
\rho\\
\sigma
\end{array}\right)=A^{\top}\left(\begin{array}{c}
\alpha\\
-\beta
\end{array}\right).
\]
\end{theorem}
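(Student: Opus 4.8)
The plan is to reduce the statement to a clean chaining of the two commutation lemmas already established, so that no direct manipulation of the oscillatory integral \eqref{BAe} is needed. Starting from the definition $\mathcal{B}_{A}(f,g)=\mathcal{F}_{2}\mathfrak{T}_{A}(f\otimes\overline{g})$ in \eqref{BAi}, the first task is to rewrite the shifted tensor product $(M_{\alpha}T_{a}f)\otimes\overline{(M_{\beta}T_{b}g)}$ as a single $2d$-dimensional time-frequency shift applied to $f\otimes\overline{g}$. Evaluating at $(t_{1},t_{2})$ gives $e^{2\pi i t_{1}\alpha}f(t_{1}-a)\cdot e^{-2\pi i t_{2}\beta}\overline{g(t_{2}-b)}$, and the decisive observation is that the complex conjugation in the second slot reverses the sign of the modulation frequency there. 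Hence
\[
(M_{\alpha}T_{a}f)\otimes\overline{(M_{\beta}T_{b}g)}=M_{(\alpha,-\beta)}T_{(a,b)}(f\otimes\overline{g})=\pi\big((a,b),(\alpha,-\beta)\big)(f\otimes\overline{g}).
\]

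Next I would push the coordinate transformation $\mathfrak{T}_{A}$ through this time-frequency shift by means of Lemma \ref{tfs coortr}, which gives $\mathfrak{T}_{A}\,\pi\big((a,b),(\alpha,-\beta)\big)=\pi\big(A^{-1}(a,b),A^{\top}(\alpha,-\beta)\big)\mathfrak{T}_{A}$. Setting $(r,s)^{\top}=A^{-1}(a,b)^{\top}$ and $(\rho,\sigma)^{\top}=A^{\top}(\alpha,-\beta)^{\top}$ --- precisely the vectors appearing in the statement --- this identifies $\mathcal{B}_{A}(M_{\alpha}T_{a}f,M_{\beta}T_{b}g)$ with $\mathcal{F}_{2}M_{(\rho,\sigma)}T_{(r,s)}\mathfrak{T}_{A}(f\otimes\overline{g})$.

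The last step is to commute the partial Fourier transform $\mathcal{F}_{2}$ past the remaining time-frequency shift, which is exactly the content of Lemma \ref{tfs partial Fou}: it produces the scalar phase $e^{2\pi i\sigma s}$ and converts $M_{(\rho,\sigma)}T_{(r,s)}$ into $M_{(\rho,-s)}T_{(r,\sigma)}$ acting on $\mathcal{F}_{2}\mathfrak{T}_{A}(f\otimes\overline{g})=\mathcal{B}_{A}(f,g)$. This yields the first displayed equality in \eqref{covform}, and the second follows by simply unwinding the action of the operators on $\mathbb{R}^{2d}$, since $M_{(\rho,-s)}T_{(r,\sigma)}H(x,\omega)=e^{2\pi i(x\rho-\omega s)}H(x-r,\omega-\sigma)$.

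I expect the only real difficulty to be bookkeeping rather than conceptual. The delicate points are the sign flip induced by the conjugation in the second tensor factor and the correct routing of the four parameters $a,b,\alpha,\beta$ through $A^{-1}$ and $A^{\top}$, so that they land in exactly the slots demanded by Lemmas \ref{tfs coortr} and \ref{tfs partial Fou}. Once the shifted product is correctly recognized as $\pi\big((a,b),(\alpha,-\beta)\big)(f\otimes\overline{g})$, the remainder is a mechanical application of the two lemmas, and the constant phase $e^{2\pi i\sigma s}$ emerges automatically from the $\mathcal{F}_{2}$-commutation.
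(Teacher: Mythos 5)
Your proposal is correct and follows exactly the paper's own proof: the same reduction of $(M_{\alpha}T_{a}f)\otimes\overline{(M_{\beta}T_{b}g)}$ to $M_{(\alpha,-\beta)}T_{(a,b)}(f\otimes\overline{g})$ with the conjugation-induced sign flip, then Lemma \ref{tfs coortr} to route the parameters through $A^{-1}$ and $A^{\top}$, and finally Lemma \ref{tfs partial Fou} to commute $\mathcal{F}_{2}$ past the shift, producing the phase $e^{2\pi i\sigma s}$. All bookkeeping in your argument checks out against the paper's computation.
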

\begin{proof}
We employ the results in Lemmas \ref{tfs coortr} and \ref{tfs partial Fou},
and the notation introduced in the claim:
\begin{alignat*}{1}
\mathcal{B}_{A}\left(M_{\alpha}T_{a}f,M_{\beta}T_{b}g\right) & =\mathcal{F}_{2}\mathfrak{T}_{A}\left(M_{\left(\alpha,-\beta\right)}T_{\left(a,b\right)}\left(f\otimes\overline{g}\right)\right)\\
 & =\mathcal{F}_{2}\left(M_{\left(\rho,\sigma\right)}T_{\left(r,s\right)}\mathfrak{T}_{A}\left(f\otimes\overline{g}\right)\right)\\
 & =e^{2\pi i\sigma s}M_{\left(\rho,-s\right)}T_{\left(r,\sigma\right)}\mathcal{F}_{2}\mathfrak{T}_{A}\left(f\otimes\overline{g}\right)\\
 & =e^{2\pi i\sigma s}M_{\left(\rho,-s\right)}T_{\left(r,\sigma\right)}\mathcal{B}_{A}\left(f,g\right).
\end{alignat*}
This concludes the proof.
\end{proof}
\begin{corollary}
\label{cov form inv}Let $A\in\GLL$.
For any $f,g\in L^{2}\left(\mathbb{R}^{d}\right)$ and $a,b,\alpha,\beta\in\mathbb{R}^{d}$,
the following formula holds:
\begin{equation}\label{covforminv}
M_{\left(\alpha,\beta\right)}T_{\left(a,b\right)}\mathcal{B}_{A}\left(f,g\right)\left(x,\omega\right)=e^{2\pi ib\beta}\mathcal{B}_{A}\left(M_{\rho}T_{r}f,M_{\sigma}T_{s}g\right)\left(x,\omega\right),\quad \phas\in\rdd,
\end{equation}
where 
\[
\left(\begin{array}{c}
r\\
s
\end{array}\right)=A\mathcal{I}_{2}\left(\begin{array}{c}
a\\
\beta
\end{array}\right),\qquad\left(\begin{array}{c}
\rho\\
\sigma
\end{array}\right)=\mathcal{I}_{2}A^{\#}\left(\begin{array}{c}
\alpha\\
b
\end{array}\right).
\]
\end{corollary}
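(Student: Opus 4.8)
The plan is to read the result directly off the covariance formula in Theorem \ref{cov formula} by inverting its two linear relations. In that theorem, shifting $f$ by $M_{\alpha'}T_{a'}$ and $g$ by $M_{\beta'}T_{b'}$ produces the phase-space shift $M_{(\rho',-s')}T_{(r',\sigma')}$ of $\mathcal{B}_{A}(f,g)$ together with a phase $e^{2\pi i\sigma's'}$, where $(r',s')^{\top}=A^{-1}(a',b')^{\top}$ and $(\rho',\sigma')^{\top}=A^{\top}(\alpha',-\beta')^{\top}$. I use primes for the parameters of Theorem \ref{cov formula} to keep them distinct from the data $(\alpha,\beta,a,b)$ of the present statement.

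First I would demand that the phase-space shift appearing on the right-hand side of \eqref{covform} coincide with the prescribed shift $M_{(\alpha,\beta)}T_{(a,b)}$. Matching the modulation and translation block-components yields $\rho'=\alpha$, $-s'=\beta$, $r'=a$, $\sigma'=b$; equivalently, the pairs must satisfy $(r',s')=(a,-\beta)$ and $(\rho',\sigma')=(\alpha,b)$.

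Next I would invert the two linear relations to recover the signal-side parameters. From $(a,-\beta)^{\top}=(r',s')^{\top}=A^{-1}(a',b')^{\top}$ and the identity $(a,-\beta)^{\top}=\mathcal{I}_{2}(a,\beta)^{\top}$ I get $(a',b')^{\top}=A\mathcal{I}_{2}(a,\beta)^{\top}$, which is exactly the translation pair $(r,s)$ of the statement, applied to $f$ and $g$ respectively. Similarly, from $(\alpha,b)^{\top}=(\rho',\sigma')^{\top}=A^{\top}(\alpha',-\beta')^{\top}$ I obtain $(\alpha',-\beta')^{\top}=A^{\#}(\alpha,b)^{\top}$, hence $(\alpha',\beta')^{\top}=\mathcal{I}_{2}A^{\#}(\alpha,b)^{\top}=(\rho,\sigma)^{\top}$, the modulation pair. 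Therefore the left-hand side of Theorem \ref{cov formula} becomes $\mathcal{B}_{A}(M_{\rho}T_{r}f,M_{\sigma}T_{s}g)$.

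Finally I would track the phase: with $\sigma'=b$ and $s'=-\beta$ the factor $e^{2\pi i\sigma's'}$ equals $e^{-2\pi i b\beta}$, so Theorem \ref{cov formula} reads $\mathcal{B}_{A}(M_{\rho}T_{r}f,M_{\sigma}T_{s}g)=e^{-2\pi i b\beta}M_{(\alpha,\beta)}T_{(a,b)}\mathcal{B}_{A}(f,g)$, and solving for the phase-space shift gives precisely \eqref{covforminv}. There is no genuine obstacle here: the content is entirely the inversion of the invertible maps $A$ and $A^{\top}$ together with correct bookkeeping of the reflection $\mathcal{I}_{2}$ and of the sign in the phase, so the only point requiring care is keeping the first and second block coordinates consistent throughout.
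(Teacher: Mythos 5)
Your proposal is correct: the parameter matching $(r',s')=(a,-\beta)$, $(\rho',\sigma')=(\alpha,b)$, the inversions $(a',b')^{\top}=A\mathcal{I}_{2}(a,\beta)^{\top}$ and $(\alpha',\beta')^{\top}=\mathcal{I}_{2}A^{\#}(\alpha,b)^{\top}$, and the phase $e^{2\pi i\sigma's'}=e^{-2\pi ib\beta}$ all check out, yielding exactly \eqref{covforminv}. The paper states this as an immediate corollary of Theorem \ref{cov formula} without written proof, and your inversion of the covariance formula is precisely the intended argument.
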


Notice that we recapture the covariance formula for the $\tau$-Wigner distribution with $A=A_{\tau}$ as in \eqref{Atau}, cf. \cite[Prop. 3.3]{cnt18}. In particular, for $\tau=1/2$, $\alpha=\beta$ and $a=b$, the covariance formula for the Wigner distribution follows:
\begin{equation}\label{cov wig}
W(M_{\alpha}T_a f,M_{\alpha}T_a g)(x,\omega)=W(f,g)(x-a,\omega-\alpha).
\end{equation}
Furthermore, the covariance properties established in Theorem \ref{cov formula} easily extend to any modulation space $M^p(\rd)$,  for every  $1\leq p\leq \infty$.

We now establish an amazing representation result for the STFT of
a bilinear time-frequency distribution. This will allow to enlarge
the functional framework to modulation and Wiener amalgam spaces with
minimum effort. 
\begin{theorem}[Short-time product formula]
\label{magic formula}Assume $A\in\GLL$
and $f,g,\psi,\phi\in L^{2}\left(\mathbb{R}^{d}\right)$, and set
$z=\left(z_{1},z_{2}\right)$, $\zeta=\left(\zeta_{1},\zeta_{2}\right)\in\rdd$.
Then, 
\begin{equation}\label{STP}
V_{\mathcal{B}_{A}\left(\phi,\psi\right)}\mathcal{B}_{A}\left(f,g\right)\left(z,\zeta\right)=e^{-2\pi iz_{2}\zeta_{2}}V_{\phi}f\left(a,\alpha\right)\overline{V_{\psi}g\left(b,\beta\right)},
\end{equation}
where 
\[
\left(\begin{array}{c}
a\\
b
\end{array}\right)=A\mathcal{I}_{2}\left(\begin{array}{c}
z_{1}\\
\zeta_{2}
\end{array}\right)=\left(\begin{array}{c}
A_{11}z_{1}-A_{12}\zeta_{2}\\
A_{21}z_{1}-A_{22}\zeta_{2}
\end{array}\right),
\]
\[
\left(\begin{array}{c}
\alpha\\
\beta
\end{array}\right)=\mathcal{I}_{2}A^{\#}\left(\begin{array}{c}
\zeta_{1}\\
z_{2}
\end{array}\right)=\left(\begin{array}{c}
(A^{\#})_{11}\zeta_{1}+(A^{\#})_{12}z_{2}\\
-(A^{\#})_{21}\zeta_{1}-(A^{\#})_{22}z_{2}
\end{array}\right).
\]
\end{theorem}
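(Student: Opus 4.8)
The plan is to bypass the oscillatory integral defining the short-time Fourier transform and instead recast the left-hand side of \eqref{STP} as an inner product of two matrix-Wigner distributions in $L^{2}(\rdd)$, which can then be resolved by combining the covariance formula with the orthogonality relations \eqref{ortrel}. Concretely, since $\mathcal{B}_A(\phi,\psi)\in L^{2}(\rdd)$ serves as the window and $(z,\zeta)\in\rdd\times\rdd$, the definition \eqref{STFTdef} of the STFT on phase space gives
\[
V_{\mathcal{B}_A(\phi,\psi)}\mathcal{B}_A(f,g)(z,\zeta)=\langle \mathcal{B}_A(f,g),\,M_{\zeta}T_{z}\,\mathcal{B}_A(\phi,\psi)\rangle_{L^{2}(\rdd)}.
\]

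The crucial step is to absorb the phase-space time-frequency shift $M_{\zeta}T_{z}$ into the window by means of Corollary \ref{cov form inv}. Reading the modulation parameter as $\zeta=(\zeta_{1},\zeta_{2})$ and the translation parameter as $z=(z_{1},z_{2})$, that corollary rewrites $M_{\zeta}T_{z}\mathcal{B}_A(\phi,\psi)$ as a single matrix-Wigner distribution with time-frequency shifted windows, namely $e^{2\pi i z_{2}\zeta_{2}}\mathcal{B}_A(M_{\alpha}T_{a}\phi,M_{\beta}T_{b}\psi)$, where the induced parameters are exactly the $\binom{a}{b}=A\mathcal{I}_{2}\binom{z_{1}}{\zeta_{2}}$ and $\binom{\alpha}{\beta}=\mathcal{I}_{2}A^{\#}\binom{\zeta_{1}}{z_{2}}$ displayed in the statement. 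Extracting the scalar from the conjugate-linear second slot of the inner product converts $e^{2\pi i z_{2}\zeta_{2}}$ into $e^{-2\pi i z_{2}\zeta_{2}}$.

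Now both arguments of the inner product are genuine matrix-Wigner distributions, so I would invoke the orthogonality relations \eqref{ortrel} with $(f_{1},g_{1})=(f,g)$ and $(f_{2},g_{2})=(M_{\alpha}T_{a}\phi,M_{\beta}T_{b}\psi)$; this splits the inner product as $|\det A|^{-1}\langle f,M_{\alpha}T_{a}\phi\rangle\,\overline{\langle g,M_{\beta}T_{b}\psi\rangle}$. Recognizing each factor as an STFT through \eqref{STFTdef}, i.e.\ $\langle f,M_{\alpha}T_{a}\phi\rangle=V_{\phi}f(a,\alpha)$ and $\langle g,M_{\beta}T_{b}\psi\rangle=V_{\psi}g(b,\beta)$, produces the claimed right-hand side (up to the normalizing constant $|\det A|^{-1}$ recorded in \eqref{ortrel}). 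The whole argument is conceptually light, since the two ingredients --- the covariance formula of Theorem \ref{cov formula} and the orthogonality relations --- are already in hand; the real work, and the only place where errors creep in, is the bookkeeping: one must check that the parameter maps generated by Corollary \ref{cov form inv} match $A\mathcal{I}_{2}\binom{z_{1}}{\zeta_{2}}$ and $\mathcal{I}_{2}A^{\#}\binom{\zeta_{1}}{z_{2}}$ on the nose, and that the exponential $e^{-2\pi i z_{2}\zeta_{2}}$ together with the scalar from \eqref{ortrel} are tracked consistently.
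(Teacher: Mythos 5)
Your proposal is correct and is essentially the paper's own proof: write the STFT as $\left\langle \mathcal{B}_{A}(f,g),M_{\zeta}T_{z}\mathcal{B}_{A}(\phi,\psi)\right\rangle$, absorb the phase-space shift into the window via Corollary \ref{cov form inv} (with the phase $e^{2\pi i z_{2}\zeta_{2}}$ conjugated out of the second slot), then apply the orthogonality relations \eqref{ortrel} and identify the two STFTs, with exactly the same parameter bookkeeping. Your parenthetical about the constant is in fact a correction to the paper rather than a defect of your argument: applying \eqref{ortrel} genuinely produces the factor $\left|\det A\right|^{-1}$, which the stated formula \eqref{STP} (and the paper's displayed proof) silently drops --- harmless in the later applications to Cohen-type matrices, where $\left|\det A\right|=1$.
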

\begin{proof}
It is a matter of computation:

\begin{alignat*}{1}
V_{\mathcal{B}_{A}\left(\phi,\psi\right)}\mathcal{B}_{A}\left(f,g\right)\left(z,\zeta\right) & =\left\langle \mathcal{B}_{A}\left(f,g\right),M_{\zeta}T_{z}\mathcal{B}_{A}\left(\phi,\psi\right)\right\rangle \\
 & =e^{-2\pi iz_{2}\zeta_{2}}\left\langle \mathcal{B}_{A}\left(f,g\right),\mathcal{B}_{A}\left(M_{\alpha}T_{a}\phi,M_{\beta}T_{b}\psi\right)\right\rangle \\
 & =e^{-2\pi iz_{2}\zeta_{2}}\left\langle f,M_{\alpha}T_{a}\phi\right\rangle \overline{\left\langle g,M_{\beta}T_{b}\psi\right\rangle }\\
 & =e^{-2\pi iz_{2}\zeta_{2}}V_{\phi}f\left(a,\alpha\right)\overline{V_{\psi}g\left(b,\beta\right)},
\end{alignat*}
where we used the orthogonality relations and Corollary \ref{cov form inv},
with
\[
\left(\begin{array}{c}
a\\
b
\end{array}\right)=A\left(\begin{array}{c}
z_{1}\\
-\zeta_{2}
\end{array}\right),\qquad\left(\begin{array}{c}
\alpha\\
\beta
\end{array}\right)=\mathcal{I}_{2}A^{\#}\left(\begin{array}{c}
\zeta_{1}\\
z_{2}
\end{array}\right).
\]
\end{proof}

\section{Cohen's class and perturbations}
This section is the core of our study. We shall prove Theorem \ref{maint}.
Recall first  the definition of Cohen's class, a family of phase-space representations
obtained by convolving the Wigner transform with a tempered distribution, as detailed below. 
\begin{definition}[\cite{Grochenig_2001_Foundations}]
A time-frequency distribution $Q$ belongs to the Cohen's class if
there exists a tempered distribution $\theta\in\mathcal{S}'\left(\rdd\right)$
such that
\[
Q\left(f,g\right)=W\left(f,g\right)*\theta,\qquad\forall f,g\in\mathcal{S}\left(\mathbb{R}^{d}\right).
\]
\end{definition}
The MWDs belonging to Cohen's class can be completely
characterized, as detailed in Theorem \ref{maint}, that we are going to prove.

\begin{proof}[Proof of Theorem \ref{maint}]
Let us first prove necessity. Observe that a member of Cohen's class necessarily satisfies the covariance property \eqref{cov wig}: 
\[
Q\left(M_{\omega}T_{x}f\right)=T_{\left(x,\omega\right)}Qf, \quad \forall f\in\mathcal{S}(\mathbb{R}^{d}).
\]

By Theorem \ref{cov formula}, with $\alpha=\beta=\omega$, $a=b=x$
and $f=g$, we get
\[
\left(\rho,\sigma\right)=\left(0,\omega\right),\qquad\left(r,s\right)=\left(x,0\right).
\]
Converting these into conditions for the matrix $A=\left(\begin{array}{cc}
A_{11} & A_{12}\\
A_{21} & A_{22}
\end{array}\right)$ yields
\[
\rho=0\Rightarrow\left(A_{11}^{\top}-A_{21}^{\top}\right)\omega=0\Rightarrow\quad A_{11}=A_{21}=N,\qquad N\in\mathbb{R}^{d\times d}.
\]
\[
\sigma=\omega\Rightarrow\left(A_{12}^{\top}-A_{22}^{\top}\right)\omega=\omega\Rightarrow\quad A_{12}-A_{22}=I.
\]
Setting $A_{22}=M-(1/2)I$ for some $M\in\mathbb{R}^{d\times d}$
(other parametrizations are of course allowed), the block structure
of $A$ is thus determined by
\[
A=\left(\begin{array}{cc}
N & M+(1/2)I\\
N & M-(1/2)I
\end{array}\right).
\]
In order to exploit the conditions on $\left(r,s\right)$, notice
that $A$ is assumed to be invertible. From \cite[App. A - Lemma. 4]{folland}
we in fact have
\[
\det A=\left(-1\right)^{d}\det N\ne0,
\]
hence $N\in\mathrm{GL}\left(d,\mathbb{R}\right)$. With this additional
information we are able to explicitly compute $A^{-1}$, namely
\[
A^{-1}=\left(\begin{array}{cc}
-N^{-1}\left(M-(1/2)I\right) & N^{-1}\left(M+(1/2)I\right)\\
I & -I
\end{array}\right).
\]
Then
\[
\left(\begin{array}{c}
r\\
s
\end{array}\right)=A^{-1}\left(\begin{array}{c}
x\\
x
\end{array}\right)=\left(\begin{array}{c}
N^{-1}x\\
0
\end{array}\right),
\]
hence $s=0$ is automatically fulfilled and we get 
\[
r=x\Rightarrow N^{-1}x=x\Rightarrow\quad N=I.
\]
In conclusion, if $\mathcal{B}_{A}$ belongs to the Cohen's class,
then $A$ has the form \eqref{AM}. 

For what concerns sufficiency, assume that $A=A_{M}$ has this prescribed
form. We shall show that $\mathcal{B}_{A_{M}}=W*\theta_{M}$ for some $\theta_{M}\in\mathcal{S}'\left(\rdd\right)$.
Applying the symplectic Fourier transform to both sides, this is equivalent
to showing that, for any $f,g\in\mathcal{S}\left(\mathbb{R}^{d}\right)$,
\begin{equation}\label{eq:cohen symp fou}
\mathcal{F}_{\sigma}\mathcal{B}_{A_{M}}\left(f,g\right)=\mathcal{F}_{\sigma}W\left(f,g\right)\cdot\mathcal{F}_{\sigma}\theta_{M}=Amb\left(f,g\right)\cdot\mathcal{F}_{\sigma}\theta_{M},
\end{equation}
where $Amb(f,g)$ is defined in \eqref{ambiguity}. 
From \eqref{FouB}, for any $\xi,\eta\in\mathbb{R}^{d}$ we have

\[
\mathcal{F}\mathcal{B}_{A_{M}}\left(f,g\right)\left(\xi,\eta\right)=\mathcal{B}_{A_{M}J}\left(f,g\right)\left(\eta,\xi\right),
\]
where 
\[
A_{M}J=\left(\begin{array}{cc}
-\left(M+(1/2)I\right) & I\\
-\left(M-(1/2)I\right) & I
\end{array}\right),
\]
therefore
\begin{align*}
\mathcal{F}_{\sigma}\mathcal{B}_{A_{M}}\left(f,g\right)\left(\xi,\eta\right) 
& = \mathcal{F}\mathcal{B}_{A_{M}}\left(f,g\right)\left(J(\xi,\eta)\right) 
\\ & = \mathcal{F}\mathcal{B}_{A_{M}}\left(f,g\right)\left(\eta,-\xi)\right)
\\ & =\mathcal{B}_{A_{M}J}\left(f,g\right)\left(-\xi,\eta\right)\\
 & =\int_{\mathbb{R}^{d}}e^{-2\pi i\eta t}f\left(t+\left(M+\frac{1}{2}I\right)\xi\right)\overline{g\left(t+\left(M-\frac{1}{2}I\right)\xi\right)}dt.
\end{align*}
The substitution $t+\left(M-(1/2)I\right)\xi=z-{\xi}/{2}$
yields
\begin{alignat*}{1}
\mathcal{F}_{\sigma}\mathcal{B}_{A_{M}}\left(f,g\right)\left(\xi,\eta\right) 
 & =e^{2\pi i\eta\cdot M\xi}\int_{\mathbb{R}^{d}}e^{-2\pi i\eta z}f\left(z+\frac{\xi}{2}\right)\overline{g\left(z-\frac{\xi}{2}\right)}dz\\
 & =e^{2\pi i\eta\cdot M\xi}\cdot Amb\left(f,g\right)\left(\xi,\eta\right),
\end{alignat*}
so that
\[
\mathcal{F}_{\sigma}\theta_{M}\left(\xi,\eta\right)=e^{2\pi i\eta\cdot M\xi}.
\]
Defining $\chi_{M}\left(\xi,\eta\right)\coloneqq e^{2\pi i\eta\cdot M\xi}$ and using $\cF_{\sigma}^2=I$ (the symplectic Fourier transform is an involution), we finally obtain 
\[
\theta_{M}\left(\xi,\eta\right)=\mathcal{F}_{\sigma}\chi_{M}\left(\xi,\eta\right)=\mathcal{F}_{\sigma}\left[e^{2\pi i\eta\cdot M\xi}\right]\in\mathcal{S}'\left(\rdd\right).
\]
\end{proof}
To summarise, Theorem \ref{maint} and \eqref{BAe} yield the following family of time-frequency representations belonging to the Cohen's class:
\begin{equation}\label{BAM}
\cB_{A_M}f(x,\omega)=\int_{\rd}e^{-2\pi i\omega y}f\left(x+\left(M+\frac{1}{2}I\right)y\right)\overline{f\left(x+\left(M-\frac{1}{2}I\right) y\right)}dy.
\end{equation}

If one wants to underline the particular symmetry with respect to
the Wigner distribution $\left(M=0\right)$, the following point of
view on MWDs in the Cohen's class can be assumed: we feel that this
class of distributions is, in some heuristic sense, a family of ``linear
perturbations'' of the Wigner distribution. This interpretation can
be justified both at the level of matrices and at the level of kernels,
but the main insight here is that we are considering a simple family
of time-frequency distributions in the Cohen's class and our aim is
to enlighten which properties of Wigner or $\tau$-Wigner distributions
is ``stable'' under this type of perturbation. 

 In order to enforce this viewpoint, let us highlight the effect of the perturbation on the Gaussian signal. 
\begin{lemma}[Perturbed representation of a Gaussian signal]\label{BAf}
Consider  $A=A_M\in \GLL$ as in \eqref{AM} and $\varphi_{\lambda}\left(t\right)=e^{-\pi t^{2}/\lambda}$, $\lambda>0$. Then,
\begin{multline}\label{BAgauss}
\mathcal{B}_{A}\varphi_{\lambda}\left(x,\omega\right)=\left(2\lambda\right)^{d/2}\det\left(S\right)^{-1/2}e^{-2\pi x^{2}/\lambda}\\ \cdot e^{8\pi\left(M^{\top}x\cdot S^{-1}M^{\top}x\right)/\lambda}e^{8\pi iS^{-1}\omega\cdot M^{\top}x}e^{-2\pi\lambda\omega S^{-1}\omega},
\end{multline}
where $S=I+4M^{\top}M\in\mathbb{R}^{d\times d}$.
\end{lemma}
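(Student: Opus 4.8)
The plan is to evaluate \eqref{BAM} directly as a Gaussian integral. Setting $f=\varphi_\lambda$ and using that $\varphi_\lambda$ is real-valued, so that the complex conjugate plays no role, I would write
\[
\mathcal{B}_{A_M}\varphi_\lambda(x,\omega)=\int_{\rd}e^{-2\pi i\omega\cdot y}\varphi_\lambda\!\left(x+\left(M+\tfrac12 I\right)y\right)\varphi_\lambda\!\left(x+\left(M-\tfrac12 I\right)y\right)dy,
\]
and observe that the product of the two Gaussian factors is again a Gaussian in $y$; the integral is then simply the Fourier transform of a (shifted, complex) Gaussian evaluated at $\omega$.

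First I would combine the two exponents. Writing $\varphi_\lambda(u)=e^{-\pi|u|^2/\lambda}$ and introducing $w=x+My$, the two arguments become $w\pm\tfrac12 y$, so that
\[
\left|x+\left(M+\tfrac12 I\right)y\right|^2+\left|x+\left(M-\tfrac12 I\right)y\right|^2=2|w|^2+\tfrac12|y|^2=2|x+My|^2+\tfrac12|y|^2.
\]
Expanding $|x+My|^2=|x|^2+2\,M^\top x\cdot y+y^\top M^\top M y$ lets me factor the purely $x$-dependent term $e^{-2\pi|x|^2/\lambda}$ out of the integral, leaving a linear term in $y$ with coefficient proportional to $M^\top x$ and a quadratic term $2\,y^\top M^\top M y+\tfrac12 y^\top y=\tfrac12\,y^\top S y$, where $S=I+4M^\top M$. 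This is exactly where the matrix $S$ of the statement appears, and I would note that $S$ is symmetric and positive definite (being $I$ plus a positive semidefinite matrix), which guarantees convergence.

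The core computation is the resulting Gaussian integral. I would apply the standard identity
\[
\int_{\rd}e^{-\pi y^\top P y+2\pi b^\top y}\,dy=(\det P)^{-1/2}e^{\pi b^\top P^{-1}b},
\]
valid for symmetric positive definite $P$ and \emph{complex} $b$, with $P=\tfrac{1}{2\lambda}S$ and $b=-\tfrac{2}{\lambda}M^\top x-i\omega$. The prefactor $(\det P)^{-1/2}$ yields $(2\lambda)^{d/2}\det(S)^{-1/2}$, matching the claim. It then remains to expand $\pi b^\top P^{-1}b=2\pi\lambda\,b^\top S^{-1}b$ by separating the real part $a=-\tfrac{2}{\lambda}M^\top x$ from the imaginary part $-i\omega$ of $b$: using the symmetry of $S^{-1}$, the term $a^\top S^{-1}a$ produces $e^{8\pi(M^\top x\cdot S^{-1}M^\top x)/\lambda}$, the cross term produces $e^{8\pi i S^{-1}\omega\cdot M^\top x}$, and the purely imaginary part produces $e^{-2\pi\lambda\,\omega\cdot S^{-1}\omega}$. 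Reassembling with the factored-out $e^{-2\pi|x|^2/\lambda}$ gives \eqref{BAgauss}.

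The only delicate point is the use of the Gaussian formula with a complex linear coefficient $b$: strictly, one justifies it by completing the square and shifting the contour of integration (equivalently, by analytic continuation from real $b$), which is legitimate precisely because $P$ is positive definite. Beyond this, the argument is pure bookkeeping—tracking the transposes so that $M^\top x$ rather than $Mx$ appears throughout, and correctly distributing the factor $2\pi\lambda$ over the three pieces of $b^\top S^{-1}b$.
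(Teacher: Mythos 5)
Your proposal is correct and follows essentially the same route as the paper: both reduce the integral to a Gaussian by combining the two exponents into $2|x+My|^{2}+\tfrac12|y|^{2}$, identify $S=I+4M^{\top}M$, and evaluate in closed form. The only cosmetic difference is that the paper completes the square explicitly (via $h=-S^{-1}t$, $k=r-t\cdot S^{-1}t$) and then invokes the Fourier transform of $e^{-\pi y\cdot Sy}$ from \cite[App. A - Theorem 1]{folland}, whereas you absorb that step into the standard complex-shift Gaussian identity with $b=-\tfrac{2}{\lambda}M^{\top}x-i\omega$, correctly justified by analytic continuation.
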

\begin{proof} Using the definition of $\mathcal{B}_{A}\varphi_{\lambda}$, we can write
\begin{alignat*}{1}
\mathcal{B}_{A}\varphi_{\lambda}\left(x,\omega\right) & =\int_{\mathbb{R}^{d}}e^{-2\pi i\omega y}\exp\left[-\frac{\pi}{\lambda}\left(\left(x+\left(M+\frac{1}{2}I\right)y\right)^{2}+\left(x+\left(M-\frac{1}{2}I\right)y\right)^{2}\right)\right]dy\\
& =\int_{\mathbb{R}^{d}}e^{-2\pi i\omega y}\exp\left[-\frac{\pi}{\lambda}\left(2x^{2}+4xMy+2\left(My\right)^{2}+\frac{1}{2}y^{2}\right)\right]dy\\
& =\int_{\mathbb{R}^{d}}e^{-2\pi i\omega y}\exp\left[-\frac{\pi}{2\lambda}\left(\left(I+4M^{\top}M\right)y\cdot y+2\left(4M^{\top}x\right)y+4x^{2}\right)\right]dy.
\end{alignat*}
Now set $S=I+4M^{\top}M\in\mathbb{R}^{d\times d}$, $t=4M^{\top}x\in\mathbb{R}^{d}$
and $r=4x^{2}$. Notice that, in particular, $S$ is a symmetric  positive-definite matrix, hence invertible. Therefore, 
\begin{alignat*}{1}
\mathcal{B}_{A}\varphi_{\lambda}\left(x,\omega\right) & =\int_{\mathbb{R}^{d}}e^{-2\pi i\omega y}\exp\left[-\frac{\pi}{2\lambda}\left(Sy\cdot y+2t\cdot y+r\right)\right]dy\\
& =\int_{\mathbb{R}^{d}}e^{-2\pi i\omega y}\exp\left[-\frac{\pi}{2\lambda}\left(\left(y-h\right)\cdot S\left(y-h\right)+k\right)\right]dy,
\end{alignat*}
where $h=-S^{-1}t$ and $k=r-t\cdot S^{-1}t$. To conclude,
\begin{alignat*}{1}
\mathcal{B}_{A}\varphi_{\lambda}\left(x,\omega\right) & =\int_{\mathbb{R}^{d}}e^{-2\pi i\omega y}\exp\left[-\frac{\pi}{2\lambda}\left(\left(y-h\right)\cdot S\left(y-h\right)+k\right)\right]dy\\
& =e^{-\frac{\pi}{2\lambda}k}e^{-2\pi i\omega h}\int_{\mathbb{R}^{d}}e^{-2\pi i\omega y}e^{-\frac{\pi}{2\lambda}ySy}dy\\
& =\left(2\lambda\right)^{d/2}e^{-\frac{\pi}{2\lambda}k}e^{-2\pi i\omega h}\int_{\mathbb{R}^{d}}e^{-2\pi i\sqrt{2\lambda}\omega y}e^{-\pi ySy}dy\\
& =\left(2\lambda\right)^{d/2}\det\left(S\right)^{-1/2}e^{-\frac{\pi}{2\lambda}k}e^{-2\pi i\omega h}e^{-2\pi\lambda\omega S^{-1}\omega}, 
\end{alignat*}
where in the last step we used \cite[App. A - Theorem 1]{folland}.

Therefore, $\mathcal{B}_{A}\varphi_{\lambda}$ turns out to
be a generalized Gaussian function in \eqref{BAgauss}.
\end{proof}
\begin{remark} (i). Notice that Woodbury matrix identity (cf. for instance \cite[Eq. (2.1.4)]{golub}) gives 
	\[
	R^{-1}=\left(I+4MM^{\top}\right)^{-1}=I-M\left(\frac{1}{4}I+M^{\top}M\right)^{-1}M^{\top}=I-4MS^{-1}M^{\top},
	\]
	hence
	\[
	\frac{k}{2}=2x^{2}-8x\cdot MS^{-1}M^{\top}x=2x\cdot\left(I-4MS^{-1}M^{\top}\right)x=2x\cdot R^{-1}x.
	\]
	Furthermore, we see that after setting $z=\left(x,\omega\right)\in\mathbb{R}^{2d}$
	we can write 
	\[
	\mathcal{B}_{A}\varphi_{\lambda}\left(x,\omega\right)=\left(2\lambda\right)^{d/2}\det\left(S\right)^{-1/2}e^{-\pi z\cdot \Sigma z},
	\]
	where 
	\[
	\Sigma=\left(\begin{array}{cc}
	\frac{2}{\lambda}R^{-1} & -4iMS^{-1}\\
	-4iS^{-1}M^{\top} & 2\lambda S^{-1}
	\end{array}\right) \in \mathrm{GL}(2d,\bC).
	\]
	
	The cumbersome way $M$ comes across in $\cB_A \varphi_{\lambda}$ is in fact widely simplified in the case of $\tau$-Wigner distribution, namely for $M=(\tau-1/2)I$, $\tau \in [0,1]$, see \cite[Lemma 2.8]{cdet18}.
	
	\noindent
	
	(ii)  The expression of the Cohen's kernel \eqref{thetaM} can be
	rephrased in more general terms. In fact, it is easy to see that
	\[
	e^{2\pi i\eta\cdot M\xi}=e^{2\pi i\zeta \cdot  Q_{M}\zeta},\qquad\zeta=\left(\xi,\eta\right)\in\mathbb{R}^{2d},
	\]
	where 
	\[
	Q_{M}=\left(\begin{array}{cc}
	0 & M_{sym}\\
	M_{sks} & 0
	\end{array}\right)\in\mathbb{R}^{2d\times2d},
	\]
	and $M_{sym}$ and $M_{sks}$ are the symmetric and skew-symmetric
	parts of $M$ respectively. 
	On the other hand, any block matrix with non-null off-diagonal blocks
	such as
	\[
	Q=\left(\begin{array}{cc}
	0 & V\\
	U & 0
	\end{array}\right)\in\mathbb{R}^{2d\times2d},\qquad U,V\in\mathbb{R}^{d\times d},
	\]
	can be associated with a Cohen-type matrix $A_{M_Q}$, with $M_{Q}=U+V^{\top}$. 
	\end{remark}

\subsection{Time-frequency properties of perturbed representations}

The explicit determination of the Cohen's kernel for a distribution
of Wigner type allows to derive at once a number of important properties
by simply inspecting its analytic expression. To this aim, notice that the Fourier transform of $\theta_{M}$ is
\begin{equation}\label{ThM}
\Theta_{M}\left(\xi,\eta\right)\coloneqq\mathcal{F}\theta_{M}\left(\xi,\eta\right)=\mathcal{F}_{\sigma}\theta_{M}\left(-J(\xi,\eta)\right)=\chi_{M}\left(-\eta,\xi\right)=e^{-2\pi i\xi\cdot M\eta}.
\end{equation}

 It is then clear that the relation between two distributions of the type \eqref{BAM} can be expressed by a Fourier multiplier as follows.

\begin{lemma}
	Let $A_{1}=A_{M_{1}}$ and $A_{2}=A_{M_{2}}$ be two Cohen-type matrices as in \eqref{AM}. Then, 
	\[
	\mathcal{F}\mathcal{B}_{A_{2}}\left(f,g\right)\left(\xi,\eta\right)=e^{-2\pi i\xi\cdot\left(M_{2}-M_{1}\right)\eta}\mathcal{F}\mathcal{B}_{A_{1}}\left(f,g\right)\left(\xi,\eta\right).
	\]
	
	Furthermore, if $M_{2}-M_{1}\in\mathrm{GL}\left(d,\mathbb{R}\right)$,
	\[
	\mathcal{B}_{A_{2}}\left(f,g\right)\left(x,\omega\right)=\frac{1}{\left|\det\left(M_{2}-M_{1}\right)\right|}e^{2\pi ix\cdot\left(M_{2}-M_{1}\right)^{-1}\omega}*\mathcal{B}_{A_{1}}\left(f,g\right)\left(x,\omega\right).
	\]
\end{lemma}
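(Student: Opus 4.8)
The plan is to reduce everything to the Cohen's class representation supplied by Theorem \ref{maint} and then read off the multiplier on the Fourier side. Since both matrices have the form \eqref{AM}, Theorem \ref{maint} gives $\mathcal{B}_{A_{1}}(f,g)=W(f,g)*\theta_{M_{1}}$ and $\mathcal{B}_{A_{2}}(f,g)=W(f,g)*\theta_{M_{2}}$, so the two MWDs differ only through their Cohen's kernels. Taking the Fourier transform and invoking the convolution theorem converts these convolutions into pointwise products with $\Theta_{M_{i}}=\mathcal{F}\theta_{M_{i}}$, whose explicit chirp form is already recorded in \eqref{ThM}.

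Concretely, first I would write, using \eqref{ThM},
\[
\mathcal{F}\mathcal{B}_{A_{i}}(f,g)(\xi,\eta)=\Theta_{M_{i}}(\xi,\eta)\,\mathcal{F}W(f,g)(\xi,\eta)=e^{-2\pi i\xi\cdot M_{i}\eta}\,\mathcal{F}W(f,g)(\xi,\eta),\qquad i=1,2.
\]
Since the factor $e^{-2\pi i\xi\cdot M_{1}\eta}$ never vanishes, I can solve the $i=1$ relation for $\mathcal{F}W(f,g)$ and substitute it into the $i=2$ relation, which yields
\[
\mathcal{F}\mathcal{B}_{A_{2}}(f,g)(\xi,\eta)=e^{-2\pi i\xi\cdot(M_{2}-M_{1})\eta}\,\mathcal{F}\mathcal{B}_{A_{1}}(f,g)(\xi,\eta).
\]
This is the first claimed identity, and the multiplier is exactly $\Theta_{M_{2}-M_{1}}$.

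For the second identity, assume $M_{2}-M_{1}\in\mathrm{GL}(d,\mathbb{R})$ and recognize the multiplier $e^{-2\pi i\xi\cdot(M_{2}-M_{1})\eta}$ as $\mathcal{F}\theta_{M_{2}-M_{1}}$, again by \eqref{ThM} applied with $M=M_{2}-M_{1}$. Applying the inverse Fourier transform together with the convolution theorem in reverse then gives $\mathcal{B}_{A_{2}}(f,g)=\theta_{M_{2}-M_{1}}*\mathcal{B}_{A_{1}}(f,g)$. Finally, since $M_{2}-M_{1}$ is invertible, the explicit kernel formula \eqref{thM} provides $\theta_{M_{2}-M_{1}}(x,\omega)=|\det(M_{2}-M_{1})|^{-1}e^{2\pi ix\cdot(M_{2}-M_{1})^{-1}\omega}$, which is precisely the stated convolution.

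The computation is essentially bookkeeping once the three ingredients (Theorem \ref{maint}, the chirp expression \eqref{ThM} for $\Theta_{M}$, and the explicit kernel \eqref{thM}) are in place. The only point requiring a little care is the justification of the convolution theorem in the tempered-distribution setting: the products above are meaningful because $\theta_{M}\in L^{\infty}(\rdd)$ while $W(f,g)$ is Schwartz (for $f,g\in\mathcal{S}(\rd)$) or $L^{2}$ (for $f,g\in L^{2}(\rd)$), so that $W(f,g)*\theta_{M}$ and its Fourier transform behave as expected, and the substitution eliminating the common nonvanishing factor on the Fourier side is unambiguous. I expect this to be the only mild obstacle.
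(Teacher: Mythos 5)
Your proof is correct and takes exactly the route the paper intends: the paper's own ``proof'' merely says the lemma is a straightforward computation, but the immediately preceding text (the chirp formula $\Theta_{M}(\xi,\eta)=e^{-2\pi i\xi\cdot M\eta}$ in \eqref{ThM} together with $\mathcal{B}_{A_{M}}(f,g)=W(f,g)*\theta_{M}$ from Theorem \ref{maint}) makes clear that the intended argument is precisely your multiplier comparison on the Fourier side, with the cancellation of the nonvanishing chirp and then \eqref{thM} supplying the explicit convolution kernel. Your closing remark justifying the convolution theorem and the division step in the tempered-distribution setting correctly fills in the only details the paper leaves implicit.
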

\begin{proof}
	It is a straightforward computation. We leave the details to the interested reader.
\end{proof}

\begin{proposition}\label{bam prop}
Assume that $\mathcal{B}_{A}$ belongs to the Cohen's class. For any $f,g \in \cS(\rd)$, the following
properties are satisfied:
\begin{enumerate}[label=(\roman*)]
\item \textbf{Correct marginal densities}: 
\[
\int_{\mathbb{R}^{d}}\mathcal{B}_{A}f\left(x,\omega\right)d\omega=\left|f\left(x\right)\right|^{2},\qquad\int_{\mathbb{R}^{d}}\mathcal{B}_{A}f\left(x,\omega\right)dx=\left|\hat{f}\left(\omega\right)\right|^{2},\qquad \forall x,\omega \in \rd.
\]
In particular, the energy is preserved:
\[
\iint_{\rdd}\mathcal{B}_{A}f\left(x,\omega\right)dxd\omega=\left\Vert f\right\Vert _{L^{2}}^{2}.
\]
\item \textbf{Moyal's identity}: 
\[
\left\langle \mathcal{B}_{A}f,\mathcal{B}_{A}g\right\rangle _{L^{2}\left(\rdd\right)}=\left|\left\langle f,g\right\rangle \right|^{2}.
\]
\item \textbf{Symmetry}: for all $x,\omega\in\mathbb{R}^{d}$, 
\[
\mathcal{B}_{A}\left(\mathcal{I}f\right)\left(x,\omega\right)=\mathcal{I}\mathcal{B}_{A}f\left(x,\omega\right)=\mathcal{B}_{A}f\left(-x,-\omega\right),
\]
\[
\mathcal{B}_{A}\left(\overline{f}\right)\left(x,\omega\right)=\overline{\mathcal{I}_{2}\mathcal{B}_{A}f\left(x,\omega\right)}=\overline{\mathcal{B}_{A}\left(x,-\omega\right)}.
\]
\item \textbf{Convolution properties}: for all $x,\omega\in\rd$,
\[
\mathcal{B}_{A}\left(f*g\right)\left(x,\omega\right)=\left(\mathcal{B}_{A}f\left(\cdot,\omega\right)*\mathcal{B}_{A}g\left(\cdot,\omega\right)\right)\left(x\right),
\]
\[
\mathcal{B}_{A}\left(f\cdot g\right)(x,\omega)=\left(\mathcal{B}_{A}f\left(x,\cdot\right)*\mathcal{B}_{A}g\left(x,\cdot\right)\right)\left(\omega\right).
\]
\item \textbf{Scaling invariance}: setting $U_{\lambda}f\left(t\right)\coloneqq\left|\lambda\right|^{d/2}f\left(\lambda t\right)$, $\lambda\in\bR\setminus\{0\}$, $t\in\rd$, 
\[
\mathcal{B}_{A}\left(U_{\lambda}f\right)\left(x,\omega\right)=\mathcal{B}_{A}f\left(\lambda x,\lambda^{-1}\omega\right), \qquad \forall x,\omega \in \rd.
\]
\end{enumerate}
\end{proposition}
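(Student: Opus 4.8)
The unifying strategy is to exploit the factorization from Theorem \ref{maint}: since $\mathcal{B}_A$ lies in the Cohen's class we have $A=A_M$ and $\mathcal{B}_{A_M}(f,g)=W(f,g)*\theta_M$. Thus each property reduces to the corresponding classical property of the Wigner distribution $W$ (see \cite{Grochenig_2001_Foundations,deGossonWigner}), provided one controls the effect of convolving with $\theta_M$. All of this control is encoded in the Fourier transform $\Theta_M(\xi,\eta)=\mathcal{F}\theta_M(\xi,\eta)=e^{-2\pi i\xi\cdot M\eta}$ of \eqref{ThM}, of which I would repeatedly use three elementary features: (a) $\Theta_M(\xi,0)=\Theta_M(0,\eta)=1$; (b) for each fixed $\xi$ the map $\eta\mapsto\Theta_M(\xi,\eta)$ is a character, i.e. $\Theta_M(\xi,\eta_1+\eta_2)=\Theta_M(\xi,\eta_1)\Theta_M(\xi,\eta_2)$, and symmetrically in $\xi$; (c) $\Theta_M$ is even, so $\theta_M$ is even, and $\Theta_M(\xi,-\eta)=\overline{\Theta_M(\xi,\eta)}$, whence $\mathcal{I}_2\theta_M=\overline{\theta_M}$. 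Crucially, (a)--(c) hold for every $M\in\mathbb{R}^{d\times d}$, invertible or not.

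Three of the properties are then nearly immediate. Property (ii) follows from the orthogonality relations \eqref{ortrel}, which hold for any $A$ with constant $|\det A|^{-1}$, together with $|\det A_M|=1$ (computed inside the proof of Theorem \ref{maint}): specializing \eqref{ortrel} to $f_1=g_1=f$, $f_2=g_2=g$ gives $\langle\mathcal{B}_A f,\mathcal{B}_A g\rangle=|\langle f,g\rangle|^2$. Property (v) I would prove directly from the integral representation \eqref{BAM} by the change of variable $y\mapsto\lambda^{-1}y$, the Jacobian cancelling the factor $|\lambda|^{d}$ coming from the two copies of $U_\lambda f$. For the symmetries (iii), reflection commutes with convolution, $\mathcal{I}(P*Q)=\mathcal{I}P*\mathcal{I}Q$; combined with the classical $W(\mathcal{I}f)=\mathcal{I}Wf$ and the evenness $\mathcal{I}\theta_M=\theta_M$ from (c) this yields $\mathcal{B}_A(\mathcal{I}f)=(\mathcal{I}Wf)*(\mathcal{I}\theta_M)=\mathcal{I}(Wf*\theta_M)=\mathcal{I}\mathcal{B}_A f$. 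The conjugation identity follows in the same way from $W(\overline f)=\mathcal{I}_2 Wf$, the reality of $Wf$, and $\mathcal{I}_2\theta_M=\overline{\theta_M}$ from (c), using $\mathcal{I}_2(P*Q)=(\mathcal{I}_2 P)*(\mathcal{I}_2 Q)$.

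For the marginals (i) I would convolve and integrate. Integrating $\mathcal{B}_A f=Wf*\theta_M$ in $\omega$ and carrying out the inner $\omega$-integral against the Wigner marginal $\int_{\rd}Wf(x',\omega)\,d\omega=|f(x')|^2$ leaves $\int_{\rd}|f(x-x')|^2\bigl(\int_{\rd}\theta_M(x',\omega')\,d\omega'\bigr)\,dx'$. The bracketed factor equals $\delta(x')$, which is exactly $\Theta_M(\xi,0)\equiv1$ read through the partial Fourier transform; hence the $x'$-integral collapses to $|f(x)|^2$. The frequency marginal is symmetric, using $\int_{\rd}Wf(x,\omega)\,dx=|\hat f(\omega)|^2$ and $\int_{\rd}\theta_M(x',\omega')\,dx'=\delta(\omega')$ (that is, $\Theta_M(0,\eta)\equiv1$); the energy identity then follows by a further integration.

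The genuine work is in the convolution properties (iv), which I expect to be the main obstacle, since the naive expectation that a full $2d$-dimensional convolution with $\theta_M$ commutes with a one-variable convolution is false for a generic kernel. The cleanest route is the Fourier side. With $\mathcal{F}$ the full $2d$-dimensional transform one has $\mathcal{F}\mathcal{B}_A h=\Theta_M\cdot\mathcal{F}Wh$, and the classical Wigner identity $W(f*g)=Wf*_1 Wg$ (convolution in the first variable) transforms into a convolution in the variable $\eta$ dual to $\omega$, with $\xi$ shared: $\mathcal{F}[F*_1 G](\xi,\eta)=\int_{\rd}\widehat F(\xi,\eta-\nu)\widehat G(\xi,\nu)\,d\nu$. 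Comparing $\mathcal{F}\mathcal{B}_A(f*g)=\Theta_M(\xi,\eta)\int\widehat{Wf}(\xi,\eta-\nu)\widehat{Wg}(\xi,\nu)\,d\nu$ with $\mathcal{F}[\mathcal{B}_A f*_1\mathcal{B}_A g]=\int\Theta_M(\xi,\eta-\nu)\Theta_M(\xi,\nu)\widehat{Wf}(\xi,\eta-\nu)\widehat{Wg}(\xi,\nu)\,d\nu$, the two coincide precisely because of the character property (b): $\Theta_M(\xi,\eta)=\Theta_M(\xi,\eta-\nu)\Theta_M(\xi,\nu)$. The second identity of (iv) is the Fourier-dual statement, from $W(f\cdot g)=Wf*_2 Wg$ and multiplicativity of $\Theta_M$ in its first slot. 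In short, the perturbation $\theta_M$ survives the convolution relations exactly because its Fourier transform is a bicharacter; this is the crux of the proof.
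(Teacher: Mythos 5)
Your proof is correct, but it takes a genuinely different route from the paper. The paper's proof is essentially a one-liner: it invokes the known characterizations (due to Cohen and Janssen, stated for $d=1$ and extended to $d>1$) of properties (i)--(v) in terms of conditions on the Fourier transform $\Theta_M$ of the Cohen kernel, and then observes that $\Theta_M(\xi,\eta)=e^{-2\pi i \xi\cdot M\eta}$ trivially satisfies all of them. You instead prove the sufficiency direction of those characterizations by hand, staying entirely inside the paper's own toolkit: Moyal via the orthogonality relations \eqref{ortrel} together with $|\det A_M|=1$ (which indeed follows from $\det A_M=(-1)^d\det N$ with $N=I$, computed in the proof of Theorem \ref{maint}); scaling by a direct change of variables in \eqref{BAM}; the symmetries by commuting $\mathcal{I}$ and $\mathcal{I}_2$ with convolution and using $\mathcal{I}\theta_M=\theta_M$, $\mathcal{I}_2\theta_M=\overline{\theta_M}$; and the convolution identities on the Fourier side, where the bicharacter property $\Theta_M(\xi,\eta-\nu)\Theta_M(\xi,\nu)=\Theta_M(\xi,\eta)$ does exactly the work you identify as the crux -- this is precisely condition (iv) of the paper's cited characterization, rederived rather than quoted. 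What your approach buys is self-containedness (no reliance on external references or on extending Janssen's one-dimensional statements), at the cost of length; the paper's citation is shorter but opaque. One presentational caveat: in (i) your identity $\int_{\rd}\theta_M(x',\omega')\,d\omega'=\delta(x')$ is a formal manipulation with a non-integrable chirp; it is cleaner to note that for $f\in\mathcal{S}(\rd)$ one has $\mathcal{B}_{A}f\in\mathcal{S}(\rdd)$, so $\int_{\rd}\mathcal{B}_{A}f(x,\omega)\,d\omega=\mathcal{F}_{2}\mathcal{B}_{A}f(x,0)$, and on the Fourier side $\mathcal{F}\mathcal{B}_{A}f=\Theta_M\cdot\mathcal{F}Wf$ with $\Theta_M(\xi,0)\equiv 1$, which gives the time marginal rigorously (and symmetrically for the frequency marginal using $\Theta_M(0,\eta)\equiv 1$). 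With that adjustment your argument is complete.
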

\begin{proof}
The previous properties can be characterized by requirements on the Fourier transform $\Theta_{M}$ of the corresponding Cohen's kernel, cf.
for instance \cite{cohen tfa 95} and \cite{janssen posspread 97} (for dimension $d=1$
- the stated characterization easily extends to dimension $d>1$):
\begin{enumerate}[label=(\roman*)]
\item $\Theta_{M}\left(0,\omega\right)=\Theta_{M}\left(x,0\right)=1$ for
any $x,\omega\in\mathbb{R}^{d}$ (in particular $\Theta_{M}\left(0,0\right)=1$);
\item $\left|\Theta_{M}\left(x,\omega\right)\right|=1$ for any $x,\omega\in\mathbb{R}^{d}$;
\item $\Theta_{M}\left(-x,-\omega\right)=\Theta_{M}\left(x,\omega\right)$
and $\overline{\Theta_{M}\left(x,\omega\right)}=\Theta_{M}\left(-x,\omega\right)$
respectively, for any $x,\omega\in\mathbb{R}^{d}$;
\item $\Theta_{M}\left(\cdot,\omega_{1}+\omega_{2}\right)=\Theta_{M}\left(\cdot,\omega_{1}\right)\Theta_{M}\left(\cdot,\omega_{2}\right)$
and $\Theta_{M}\left(x_{1}+x_{2},\cdot\right)=\Theta_{M}\left(x_{1},\cdot\right)\Theta_{M}\left(x_{2},\cdot\right)$
respectively, for any $x_{i},\omega_{i}\in\mathbb{R}^{d}$, $i=1,2$.
\item $\Theta_{M}\left(\lambda x,\lambda^{-1}\omega\right)=\Theta_{M}\left(x,\omega\right)$. 
\end{enumerate}
The kernel $\Theta_{M}(x,\omega)=e^{-2\pi i x\cdot M\omega}$ trivially satisfies conditions $(i)$-$(v)$ above.
\end{proof}
\begin{remark} \textbf{Real-valuedness.} Because of Proposition \ref{interchanging f g},
the only real-valued distribution of Wigner type in the Cohen's class
is exactly the Wigner distribution ($M=0$). The condition on $\Theta_{M}$
in order to have this property is indeed $\Theta_{M}\left(x,\omega\right)=\overline{\Theta_{M}\left(-x,-\omega\right)}$. \\
\textbf{Marginal densities.} With little effort, it can be shown
that the marginal densities for a general distribution $\mathcal{B}_{A}$
are given by 
\begin{align*}
\int_{\mathbb{R}^{d}}\mathcal{B}_{A}f\left(x,\omega\right)d\omega&=f\left(A_{11}x\right)\overline{f\left(A_{21}x\right)},\\
\int_{\mathbb{R}^{d}}\mathcal{B}_{A}f\left(x,\omega\right)dx&=\left|\det A\right|^{-1}\hat{f}\left((A^{\#})_{12}\omega\right)\overline{\hat{f}\left(-(A^{\#})_{22}\omega\right)}.
\end{align*}
The correct marginal densities are thus recovered if and only if $A_{11}=A_{21}=I$
and $(A^{\#})_{12}=-(A^{\#})_{22}=I$, and this forces both $\left|\det A\right|=1$
and the block structure of $A$ as that of Cohen's type. This shows
that among the bilinear distributions of Wigner type associated with
invertible matrices, the Cohen-type subclass is made by all and only
those satisfying the correct marginal densities. \\
\textbf{Short-time product formula.} Let us rewrite the STP formula \eqref{STP} for representations
in the Cohen's class: for any $\phi,\psi,f,g\in\mathcal{S}\left(\mathbb{R}^{d}\right)$,
we have
\begin{equation}
V_{\mathcal{B}_{A}\left(\phi,\psi\right)}\mathcal{B}_{A}\left(f,g\right)\left(z,\zeta\right)=e^{-2\pi iz_{2}\zeta_{2}}V_{\phi}f\left(z+P_{M}J\zeta\right)\overline{V_{\psi}g\left(z+\left(I+P_{M}\right)J\zeta\right)},\label{eq:magic cohen}
\end{equation}
where 
\begin{equation}\label{PM} 
  \begin{medsize} P_{M}=\left(\begin{array}{cc}
-\left(M+(1/2)I\right) & 0\\
0 & M-(1/2)I
\end{array}\right),\quad I+P_{M}=\left(\begin{array}{cc}
-\left(M-(1/2)I\right) & 0\\
0 & M+(1/2)I
\end{array}\right).
\end{medsize}
\end{equation} \\ 
\textbf{Covariance formula.} For any $z=\left(z_{1},z_{2}\right),\,w=\left(w_{1},w_{2}\right)\in\rdd$, the covariance formula \eqref{covform} reads

\begin{equation}
\mathcal{B}_{A}\left(\pi\left(z\right)f,\pi\left(w\right)g\right)\left(x,\omega\right)=e^{2\pi i\left[\frac{1}{2}\left(z_{2}+w_{2}\right)+M\left(z_{2}-w_{2}\right)\right]\left(z_{1}-w_{1}\right)}M_{J\left(z-w\right)}T_{\mathcal{T}_{M}\left(z,w\right)}\mathcal{B}_{A}\left(f,g\right)\left(x,\omega\right),\label{eq:covar cohen}
\end{equation}
where 
\begin{align*}
\mathcal{T}_{M}\left(z,w\right) & =\left(\begin{array}{c}
(1/2)\left(z_{1}+w_{1}\right)+M\left(w_{1}-z_{1}\right)\\
(1/2)\left(z_{2}+w_{2}\right)+M\left(z_{2}-w_{2}\right)
\end{array}\right) \\ & =\frac{1}{2}\left(z+w\right)+\left(\begin{array}{cc}
-M & 0\\
0 & M 
\end{array}\right) \left(z-w\right).
\end{align*}

Alternatively, using \eqref{PM},
\[
\mathcal{T}_{M}\left(z,w\right)=\left(\begin{array}{c}
-\left(M-(1/2)I\right)z_{1}+\left(M+(1/2)I\right)w_{1}\\
\left(M+(1/2)I\right)z_{2}-\left(M-(1/2)I\right)w_{2}
\end{array}\right)=\left(I+P_{M}\right)z-P_{M}w.
\]
\end{remark}

\subsubsection{\bf Support conservation}

A desirable property for a time-frequency distribution is the preservation
of the support of the original signal. A scale of precise mathematical
conditions can be introduced in order to capture this heuristic feature.
Following Folland's classic approach (see \cite[p. 59]{folland}),
in this section we define the support of a signal $f:\mathbb{R}^{d}\rightarrow\mathbb{C}$
as the smallest closed set outside of which $f=0$ a.e., hence we
may assume $f\equiv0$ everywhere outside $\mathrm{supp}f$.
\begin{definition}
Let $Qf:\mathbb{R}_{\left(x,\omega\right)}^{2d}\rightarrow\mathbb{C}$
be the time-frequency distribution associated to the signal $f:\mathbb{R}_{t}^{d}\rightarrow\mathbb{C}$
in a suitable function space. Let $\pi_{x}:\mathbb{R}_{\left(x,\omega\right)}^{2d}\rightarrow\mathbb{R}_{x}^{d}$
and $\pi_{\omega}:\mathbb{R}_{\left(x,\omega\right)}^{2d}\rightarrow\mathbb{R}_{\omega}^{d}$
be the projections onto the first and second factors ($\mathbb{R}_{\left(x,\omega\right)}^{2d}\simeq\mathbb{R}_{x}^{d}\times\mathbb{R}_{\omega}^{d}$)
and, for any $E\subset\mathbb{R}^{d}$, let $\mathcal{C}\left(E\right)$
denote the closed convex hull of $E$. 
\begin{itemize}
\item $Q$ satisfies the time strong support property if 
\[
f\left(x\right)=0\Leftrightarrow Qf\left(x,\omega\right)=0\qquad\forall\omega\in\mathbb{R}^{d}.
\]
$Q$ satisfies the frequency strong support property if 
\[
\hat{f}\left(\omega\right)=0\Leftrightarrow Qf\left(x,\omega\right)=0\qquad\forall x\in\mathbb{R}^{d}.
\]
\item $Q$ satisfies the time weak support property if 
\[
\pi_{x}\left(\mathrm{supp}Qf\right)\subset\mathcal{C}\left(\mathrm{supp}f\right).
\]
$Q$ satisfies the frequency weak support property if 
\[
\pi_{\omega}\left(\mathrm{supp}Qf\right)\subset\mathcal{C}\left(\mathrm{supp}\hat{f}\right).
\]
\end{itemize}
We say that $Q$ satisfies the strong (resp. weak) support property
if both time and frequency strong (resp. weak) support properties
hold. 
\end{definition}
We restrict our attention to MWDs in the Cohen's class (i.e., $\cB A$ with $A=A_M$ as in \eqref{AM}) and completely
characterize those satisfying the aforementioned properties, showing
the optimality in this sense of $\tau$-Wigner distributions. 
\begin{theorem}
The only MWDs in Cohen's class satisfying the strong correct support
properties are Rihaczek and conjugate-Rihaczek distributions. 
\end{theorem}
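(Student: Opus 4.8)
The plan is to argue directly from the explicit formula \eqref{BAM}. Set $P=M+\frac12 I$ and $Q=M-\frac12 I$, so that $P-Q=I$. For fixed $x\in\rd$ the section $\mathcal{B}_{A_M}f(x,\cdot)$ is exactly the Fourier transform in the $y$-variable of
\[
\Phi_x(y):=f(x+Py)\,\overline{f(x+Qy)},\qquad y\in\rd.
\]
Since $A_M$ is invertible, $\Phi_x$ is the $x$-section of $\mathfrak{T}_{A_M}(f\otimes\overline f)\in\cS(\rdd)$, hence $\Phi_x\in\cS(\rd)$; by injectivity of the Fourier transform, $\mathcal{B}_{A_M}f(x,\omega)=0$ for all $\omega$ if and only if $\Phi_x\equiv 0$. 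Therefore the time strong support property amounts to the requirement that, for every $f\in\cS(\rd)$ and every $x$, one has $f(x)=0\iff\Phi_x\equiv 0$. One implication is automatic for every $M$, because $\Phi_x(0)=|f(x)|^2$, so $\Phi_x\equiv 0$ forces $f(x)=0$.

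First I would show that the remaining implication $f(x)=0\Rightarrow\Phi_x\equiv 0$ (required to hold for all $f$ and $x$) forces $P=0$ or $Q=0$. The crux is a linear-algebra observation: if $P\ne 0$ and $Q\ne 0$, then $\ker P$ and $\ker Q$ are proper subspaces of $\rd$, and since $\rd$ cannot be covered by two proper subspaces there exists $y_0$ with $Py_0\ne 0$ and $Qy_0\ne 0$. Taking $x=0$, the points $0$, $Py_0$, $Qy_0$ are pairwise distinct (indeed $Py_0-Qy_0=y_0\ne 0$), so I can pick $f\in\cS(\rd)$ vanishing near $0$ yet with $f(Py_0)\ne 0$ and $f(Qy_0)\ne 0$. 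Then $f(0)=0$ while $\Phi_0(y_0)=f(Py_0)\overline{f(Qy_0)}\ne 0$, contradicting the property. Hence necessarily $P=0$ or $Q=0$, that is $M=-\frac12 I$ or $M=\frac12 I$.

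Plugging these two values into \eqref{BAM} (equivalently, recalling that $M=(\tau-\frac12)I$ corresponds to the $\tau$-Wigner distribution, so $\tau=0$ and $\tau=1$) recovers exactly the Rihaczek distribution \eqref{rihaczek} (for $M=-\frac12 I$) and the conjugate-Rihaczek distribution (for $M=\frac12 I$). It then remains to check that these two genuinely satisfy the full strong support property. This is immediate from their factorised forms: $R(f)(x,\omega)=e^{-2\pi i x\omega}f(x)\overline{\hat f(\omega)}$ vanishes for all $\omega$ precisely when $f(x)=0$ and vanishes for all $x$ precisely when $\hat f(\omega)=0$ (for $f\not\equiv 0$), and an analogous factorisation holds for the conjugate-Rihaczek distribution. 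Since the time strong support property alone already excludes every other $M$, no independent study of the frequency side is needed for the necessity part, although it could be carried out symmetrically through Proposition \ref{ft of btfd}. The one delicate point is the second paragraph: the argument rests on exploiting the full freedom in the choice of $f$ together with the elementary fact that two proper subspaces never exhaust $\rd$.
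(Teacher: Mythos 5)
Your proof is correct, but it takes a genuinely different route from the paper's. The paper does not argue from the integral representation at all: it invokes Janssen's classification (\cite[Sec.~2.6.2]{janssen posspread 97}, adapted to $d>1$), according to which the only members of the whole Cohen class with both strong support properties are linear combinations of the Rihaczek and conjugate-Rihaczek distributions, equivalently those whose kernel satisfies $\Theta(x,\omega)=C_{+}e^{\pi i x\omega}+C_{-}e^{-\pi i x\omega}$; comparing this with the chirp form $\Theta_M(\xi,\eta)=e^{-2\pi i\xi\cdot M\eta}$ of \eqref{ThM} then forces $M=\pm\tfrac12 I$. You instead work directly with \eqref{BAM}: Fourier injectivity in the second variable identifies the vanishing of the section $\mathcal{B}_{A_M}f(x,\cdot)$ with $\Phi_x\equiv0$; the evaluation $\Phi_x(0)=|f(x)|^2$ gives one implication for free; and your bump-function counterexample at a point $y_0\notin\ker P\cup\ker Q$ eliminates every $M\ne\pm\tfrac12 I$ using the \emph{time} property alone. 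The two delicate points there do check out: $\rd$ is never the union of two proper subspaces, and $0$, $Py_0$, $Qy_0$ are pairwise distinct precisely because $P-Q=I$ gives $Py_0-Qy_0=y_0\ne0$, so the required Schwartz function exists; your caveat ``for $f\not\equiv0$'' in the sufficiency check is also exactly right, the equivalence being trivial for $f\equiv0$. As for what each approach buys: yours is self-contained and elementary, works verbatim in every dimension, and never touches the Cohen kernel $\theta_M$ or the characterization machinery --- it needs only formula \eqref{BAM} --- whereas the paper's proof is shorter on the page but rests on a nontrivial external result stated in \cite{janssen posspread 97} for $d=1$ and only asserted to extend to $d>1$, a step your argument renders unnecessary. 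Conversely, the paper's route proves something stronger in context (within the \emph{entire} Cohen class, only combinations of the two Rihaczek distributions survive, which is what justifies restricting the matching to the family $\Theta_M$) and stays uniform with the kernel-based proofs used elsewhere in that section, such as the weak support theorem that follows it.
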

\begin{proof}
This result can be inferred by directly inspecting the Fourier transform of Cohen's kernel. Indeed, by adapting the proof of Janssen (see \cite[Sec. 2.6.2]{janssen posspread 97}) to dimension $d>1$ one can show that the
only members of the Cohen's class satisfying both time and frequency
strong support property are linear combinations of Rihackez and conjugate-Rihaczek distributions. This is equivalent to the following condition on the Fourier transform of the kernel $\Theta_{M}$: for any $x,\omega \in \rd$,

\[
\Theta_{M}(x,\omega)=C_{+}e^{\pi i x\omega} + C_{-}e^{-\pi i x\omega},
\]
for some $C_{+},C_{-}\in\bC$. Since $\Theta_{M}$ has the form \eqref{ThM}, this can happen if and only if $M=\pm(1/2)I$ with $C_{+}=1,0$ and $C_{-}=0,1$ respectively.
\end{proof}
\begin{theorem}
Let $A=A_{M}\in\GLL$ be a Cohen-type
matrix. The only associated distributions satisfying the weak support
property are the $\tau$-Wigner distributions, namely 
\[
M=\left(\tau-\frac{1}{2}\right)I,\qquad\tau\in\left[0,1\right].
\]
\end{theorem}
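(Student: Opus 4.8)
The plan is to reduce the weak support property to an explicit geometric condition on the affine maps that $M$ induces on $\mathrm{supp}\,f$ and $\mathrm{supp}\,\widehat f$, and then to extract the constraint on $M$ by a short linear-algebra argument. The first step is to compute the projections of $\mathrm{supp}\,\mathcal{B}_{A_M}f$ onto the time and frequency axes. Fixing $x$ and viewing $\mathcal{B}_{A_M}f(x,\cdot)$ as the Fourier transform (in $y$) of $F_x(y)=f(x+(M+\tfrac12 I)y)\overline{f(x+(M-\tfrac12 I)y)}$ via \eqref{BAM}, the section $\mathcal{B}_{A_M}f(x,\cdot)$ fails to vanish identically precisely when $F_x\not\equiv0$, i.e. when some $y$ puts both $x+(M+\tfrac12 I)y$ and $x+(M-\tfrac12 I)y$ in $\mathrm{supp}\,f$. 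The change of variables $u=x+(M+\tfrac12 I)y$, $v=x+(M-\tfrac12 I)y$ (so $y=u-v$) gives $x=(\tfrac12 I-M)u+(\tfrac12 I+M)v$, whence
\[
\pi_x\!\left(\mathrm{supp}\,\mathcal{B}_{A_M}f\right)=\overline{\left\{(\tfrac12 I-M)u+(\tfrac12 I+M)v:\ u,v\in\mathrm{supp}\,f\right\}}.
\]
Inserting $f=\mathcal{F}^{-1}\widehat f$ and integrating out $y$ to produce a delta, I would obtain the frequency analogue
\[
\pi_\omega\!\left(\mathrm{supp}\,\mathcal{B}_{A_M}f\right)=\overline{\left\{(\tfrac12 I+M^\top)\xi+(\tfrac12 I-M^\top)\eta:\ \xi,\eta\in\mathrm{supp}\,\widehat f\right\}}.
\]
In both formulas the two coefficient matrices sum to $I$, so each output is an affine combination of two points of the respective support.

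For sufficiency, setting $M=(\tau-\tfrac12)I$ turns $\tfrac12 I-M=(1-\tau)I$ and $\tfrac12 I+M=\tau I$, so the time projection becomes $\overline{\{(1-\tau)u+\tau v\}}\subset\mathcal{C}(\mathrm{supp}\,f)$ as soon as $\tau\in[0,1]$, and symmetrically in frequency; this recovers the classical weak support property of the $\tau$-Wigner distributions.

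For necessity I would argue as follows. Assuming the time weak support property, the projection formula forces $(\tfrac12 I-M)u+(\tfrac12 I+M)v\in\mathcal{C}(\mathrm{supp}\,f)$ for all $u,v\in\mathrm{supp}\,f$ and all $f$. I test this on a function $f$ that is a sum of two small bumps, nonzero at $0$ and at an arbitrary point $p$, with $\mathrm{supp}\,f\subset B_\epsilon(0)\cup B_\epsilon(p)$. Taking $u=p$, $v=0$ yields $(\tfrac12 I-M)p\in\mathcal{C}(\mathrm{supp}\,f)$; letting $\epsilon\to0$, the convex hull collapses to the segment $[0,p]$, so $(\tfrac12 I-M)p=t\,p$ for some $t=t(p)\in[0,1]$. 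Since every nonzero $p$ is thus an eigenvector of $\tfrac12 I-M$, and a linear map for which every vector is an eigenvector must be a scalar multiple of the identity, we get $\tfrac12 I-M=cI$ with $c\in[0,1]$, i.e. $M=(\tau-\tfrac12)I$ with $\tau=1-c\in[0,1]$. The frequency condition yields the same conclusion through the second projection formula (test near $\eta=0$ with arbitrary $\xi=q$).

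The main obstacle is the rigorous justification of the two support-projection formulas, namely passing from ``the defining oscillatory integral is not identically zero in the dual variable'' to the statement about $\pi_x$ and $\pi_\omega$ of $\mathrm{supp}\,\mathcal{B}_{A_M}f$, including the closure operations and the fact that $\mathcal{B}_{A_M}f$ need not be continuous when $A_M$ fails to be right-regular (e.g. $\tau\in\{0,1\}$). For the necessity direction, however, only the elementary inclusion ``$u,v\in\mathrm{supp}\,f\Rightarrow(\tfrac12 I-M)u+(\tfrac12 I+M)v\in\pi_x(\mathrm{supp}\,\mathcal{B}_{A_M}f)$'' is required, and this is robust: at the explicit value $y=u-v$ one checks $F_x(u-v)=f(u)\overline{f(v)}\neq0$, so $F_x\not\equiv0$ and the corresponding $x$ indeed lies in the time projection, after which the shrinking-bump limit finishes the argument.
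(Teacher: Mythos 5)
Your proposal is correct, and at its core it runs on the same geometric identity as the paper's proof: the change of variables $u=x+(M+\tfrac12 I)y$, $v=x+(M-\tfrac12 I)y$ gives $y=u-v$ and the barycentric relation $x=(\tfrac12 I-M)u+(\tfrac12 I+M)v$, whose coefficient matrices sum to $I$; the paper phrases exactly this as the requirement $x=\lambda\bigl(x+(M+\tfrac12 I)y\bigr)+\mu\bigl(x+(M-\tfrac12 I)y\bigr)$ with $\lambda+\mu=1$, leading to $\tfrac12(\lambda-\mu)I+M=0$, solvable precisely for $M=(\tau-\tfrac12)I$ with $\tau\in[0,1]$. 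Where you genuinely add value is the necessity direction: the paper simply \emph{decrees} that $x$ must be that particular convex combination with $y$-independent scalar coefficients, which is the right conclusion but is not literally forced by $x\in\mathcal{C}(\mathrm{supp}\,f)$ for a single signal; your two-bump construction, with $f$ nonzero at $0$ and at an arbitrary $p$ and $\mathrm{supp}\,f\subset B_\epsilon(0)\cup B_\epsilon(p)$, rigorously forces $(\tfrac12 I-M)p\in[0,p]$ after letting $\epsilon\to0$, and the standard lemma that a linear map for which every vector is an eigenvector is scalar then yields $\tfrac12 I-M=cI$ with $c\in[0,1]$, i.e. $M=(\tau-\tfrac12)I$. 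You also handle the frequency side by computing $\pi_\omega$ directly (the delta produced by the $y$-integration gives $\omega=(\tfrac12 I+M^{\top})\xi+(\tfrac12 I-M^{\top})\eta$), where the paper instead conjugates via $\mathcal{B}_{A}\hat f(x,\omega)=\mathcal{B}_{A'}f(-\omega,x)$ with $A'=\mathcal{I}_2A^{\#}\tilde I$; the two computations agree, and either one suffices since the time condition alone already pins down $M$. Two small points of precision: first, state your ``elementary inclusion'' for pairs $u,v$ where $f$ is actually nonzero rather than merely $u,v\in\mathrm{supp}\,f$ (support points can be zeros of $f$); in your construction $f(0),f(p)\neq0$, so nothing breaks. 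Second, your worry about continuity when $A_M$ fails to be right-regular is moot for the necessity step: your test signals are Schwartz, so Proposition \ref{def bilA triple}(ii) gives $\mathcal{B}_{A_M}f\in\mathcal{S}(\rdd)$, and $F_{x_0}\not\equiv0$ together with injectivity of the Fourier transform indeed places $x_0=(\tfrac12 I-M)p$ in $\pi_x\bigl(\mathrm{supp}\,\mathcal{B}_{A_M}f\bigr)$; for sufficiency your reduction of the time projection to convex combinations $(1-\tau)u+\tau v$ matches the classical $\tau$-Wigner argument, at the same level of rigor as the paper's own treatment.
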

\begin{proof}
Assume $x\in\mathrm{supp}\mathcal{B}_{A}f\left(\cdot,\omega\right)$
for a fixed $\omega\in\mathbb{R}^{d}$. The only way for this to happen
is to have
\[
f\left(x+\left(M+\frac{1}{2}I\right)y\right)\ne0,\qquad f\left(x+\left(M-\frac{1}{2}\right)y\right)\ne0,
\]
hence $x+\left(M+\frac{1}{2}I\right)y,\,x+\left(M-\frac{1}{2}I\right)y\in\mathrm{supp}f$.
In order to have $x\in\mathcal{C}\left(\mathrm{supp}f\right)$, we
require that 
\[
x=\lambda\left(x+\left(M+\frac{1}{2}I\right)y\right)+\mu\left(x+\left(M-\frac{1}{2}I\right)y\right),
\]
for some $\lambda,\mu\ge0$ such that $\lambda+\mu=1$. Rewriting
this condition as 
\[
x=\left(\lambda+\mu\right)x+\left(\frac{1}{2}\left(\lambda-\mu\right)I+\left(\lambda+\mu\right)M\right)y,
\]
gives the constraints
\[
\lambda+\mu=1,\qquad\frac{1}{2}\left(\lambda-\mu\right)I+M=0.
\]
Therefore, suitable solutions exist if and only if 
\[
M=\left(\tau-\frac{1}{2}\right)I,\qquad0\le\tau\le1.
\]

Similar arguments for the frequency weak support property shall be
applied to $\mathcal{B}_{A}\hat{f}\left(x,\omega\right)=\mathcal{B}_{A'}f\left(-\omega,x\right),$where
\[
A'=\mathcal{I}_{2}A^{\#}\tilde{I}=\left(\begin{array}{cc}
I & M-(1/2)I\\
I & -\left(M+(1/2)I\right)
\end{array}\right).
\]
\end{proof}

\subsection{Time-frequency analysis of the kernel}

In this section we deepen the study of the Cohen's kernel $\theta_{M}$
by introducing a fine scale of functional spaces with specific resolution
of the time-frequency content of $\theta_{M}$, following the approach
of \cite[Proposition. 4.1]{cdgn tfa bj} for the Cohen's kernels for $\tau$-Wigner
distributions, which will be in fact recovered below. Hereinafter
we assume $M\in\GLL$ if not specified
otherwise. 

Recall that $\Theta_{M}=\cF \theta_{M}$, where (cf. \eqref{ThM})
\[
\Theta_{M}\left(\xi,\eta\right)=\mathcal{F}\theta_{M}\left(\xi,\eta \right)=e^{-2\pi i\xi\cdot M\eta}\left(\in\mathcal{S}'\left(\rdd\right)\right).
\]
At a first glance we notice that $\Theta_{M}\in C^{\infty}\left(\rdd\right)\cap L^{\infty}\left(\rdd\right)$
and $\Theta_{M}\in L_{\text{loc}}^{p}\left(\rdd\right)$
for any $1\leq p\leq\infty$. Hence, we are dealing with distributions
whose Fourier transforms are well-behaved dilated chirps, and intuition
suggests that the kernels themselves should belong to the same family.
This heuristic statement is enforced by the following result, already
proved in \cite[Proposition 3.2 and Corollary 3.4]{cdgn red int}. 
\begin{lemma}
\label{chirp spaces}The function $\Theta\left(x,\omega\right)=e^{2\pi ix\omega}$ belongs to $M^{1,\infty}\left(\rdd\right)\cap W\left(\mathcal{F}L^{1},L^{\infty}\right)\left(\rdd\right)$. 
\end{lemma}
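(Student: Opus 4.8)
The plan is to compute the short-time Fourier transform of $\Theta$ against a Gaussian window on $\rdd$ in closed form and to read off both memberships directly from the modulus of the result. First I would fix the window $G(w)=e^{-\pi|w|^2}\in\cS(\rdd)$, so that $V_G\Theta$ may be used for every modulation- and amalgam-space norm. Writing the integration variable as $w=(u,v)$ and the phase-space variables of the transform as $z=(z_1,z_2)$, $\zeta=(\zeta_1,\zeta_2)\in\rdd$, the object to evaluate is
\[
V_G\Theta(z,\zeta)=\int_{\rdd}e^{2\pi i u\cdot v}\,e^{-\pi|w-z|^2}\,e^{-2\pi i w\cdot\zeta}\,dw,
\]
which is a Gaussian integral with a quadratic phase and is therefore computable explicitly.

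The key step is to collect the exponent into the standard form $-\pi\,w\cdot Q\,w+2\pi\,b\cdot w-\pi(z_1^2+z_2^2)$, where
\[
Q=\begin{pmatrix} I & -iI\\ -iI & I\end{pmatrix},\qquad Q^{-1}=\tfrac12\begin{pmatrix} I & iI\\ iI & I\end{pmatrix},\qquad b=(z_1-i\zeta_1,\,z_2-i\zeta_2).
\]
Since $\mathrm{Re}\,Q=I_{2d}$ is positive definite and $\det Q=2^{d}\neq0$, the Gaussian integral formula for complex symmetric matrices with positive-definite real part (cf.\ \cite[App.~A]{folland}) applies and gives $V_G\Theta(z,\zeta)=2^{-d/2}\,e^{\pi\,b\cdot Q^{-1}b-\pi(z_1^2+z_2^2)}$. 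The one calculation I would carry out carefully is the real part of the exponent: after expanding $b\cdot Q^{-1}b$ and cancelling, it collapses to
\[
\mathrm{Re}\!\left[\pi\,b\cdot Q^{-1}b-\pi(z_1^2+z_2^2)\right]=-\tfrac{\pi}{2}\!\left[(z_1-\zeta_2)^2+(z_2-\zeta_1)^2\right],
\]
whence
\[
|V_G\Theta(z,\zeta)|=2^{-d/2}\,e^{-\frac{\pi}{2}\left[(z_1-\zeta_2)^2+(z_2-\zeta_1)^2\right]}.
\]
This is a genuine Gaussian on $\Refn$, concentrated along the subspace $z_1=\zeta_2,\ z_2=\zeta_1$, and its decay is exactly what will force both norms to be finite.

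With this formula in hand both memberships follow by Tonelli. For $M^{1,\infty}(\rdd)$ one fixes $\zeta$ and integrates in $z$: a translation in $z$ shows $\int_{\rdd}|V_G\Theta(z,\zeta)|\,dz=2^{-d/2}\big(\int_{\rd}e^{-\pi s^2/2}\,ds\big)^2=2^{d/2}$, independent of $\zeta$, so the essential supremum over $\zeta$ is finite. For $\fui(\rdd)$ one instead fixes $z$ and integrates in $\zeta$; the same change of variables yields the identical constant $2^{d/2}$, independent of $z$, so the supremum over $z$ is finite as well. Hence $\Theta\in M^{1,\infty}(\rdd)\cap\fui(\rdd)$.

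I do not expect a genuine obstacle: the only delicate point is the bookkeeping of the complex Gaussian integral, which is legitimate precisely because $\mathrm{Re}\,Q$ is positive definite, together with the verification that its real part simplifies to a \emph{decaying} quadratic form. Note finally the pleasant symmetry of the final modulus under exchanging $z$ and $\zeta$: it makes the two target norms numerically identical, so the Wiener amalgam bound needs no argument separate from the modulation-space one.
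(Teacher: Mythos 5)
Your proof is correct: the complex Gaussian integral is legitimate since $\mathrm{Re}\,Q=I_{2d}>0$, the values $\det Q=2^d$ and $Q^{-1}=\tfrac12\begin{pmatrix} I & iI\\ iI & I\end{pmatrix}$ check out, the real part of the exponent does collapse to $-\tfrac{\pi}{2}\left[(z_1-\zeta_2)^2+(z_2-\zeta_1)^2\right]$, and integrating the resulting modulus in $z$ (resp.\ $\zeta$) gives the constant $2^{d/2}$, establishing both memberships. The paper itself offers no proof here, only the citation \cite[Proposition 3.2 and Corollary 3.4]{cdgn red int}, and the argument there is precisely this kind of closed-form Gaussian STFT computation, so your route is essentially the intended one.
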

Using this issue and dilation properties for Wiener amalgam spaces, we infer 

\begin{proposition}
\label{kernel spaces}Let $A=A_{M}\in\GLL$
be a Cohen-type matrix with $M\in\mathrm{GL}\left(d,\mathbb{R}\right)$.
We have 
\[
\theta_{M}\in M^{1,\infty}\left(\rdd\right)\cap W\left(\mathcal{F}L^{1},L^{\infty}\right)\left(\rdd\right).
\]
\end{proposition}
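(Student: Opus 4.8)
The plan is to reduce the statement to Lemma \ref{chirp spaces} by realizing $\theta_M$ as a linear change of variables applied to the standard chirp $\Theta(x,\omega)=e^{2\pi i x\omega}$, and then to transfer membership via the invariance of the spaces $M^{1,\infty}$ and $W(\cF L^1,L^\infty)$ under such transformations. Since $M\in\mathrm{GL}(d,\mathbb{R})$, formula \eqref{thM} provides the explicit expression $\theta_M(x,\omega)=|\det M|^{-1}e^{2\pi i x\cdot M^{-1}\omega}$, whence
\[
\theta_M=\frac{1}{|\det M|}\,\mathfrak{T}_{\mathcal{L}}\Theta,\qquad \mathcal{L}=\begin{pmatrix} I & 0\\ 0 & M^{-1}\end{pmatrix}\in\GLL,
\]
so that $\theta_M$ is a constant multiple of $\Theta$ precomposed with the coordinate transformation $\mathfrak{T}_{\mathcal{L}}$ of Lemma \ref{coord trans isom}.

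The key step is the invariance claim: for each fixed $\mathcal{L}\in\GLL$ the map $F\mapsto\mathfrak{T}_{\mathcal{L}}F$ is bounded on $M^{1,\infty}(\rdd)$. I would establish this by a direct change of variables in the short-time Fourier transform. Choosing as window the dilate $\Psi\circ\mathcal{L}$ of a fixed $\Psi\in\cS(\rdd)\setminus\{0\}$, a routine computation gives
\[
V_{\Psi\circ\mathcal{L}}(\mathfrak{T}_{\mathcal{L}}F)(X,\Omega)=|\det\mathcal{L}|^{-1}\,V_{\Psi}F\bigl(\mathcal{L}X,\ \mathcal{L}^{-\top}\Omega\bigr).
\]
The decisive observation is that $\mathcal{L}$ enters the space variable $X$ while its inverse transpose enters the frequency variable $\Omega$, and the two are \emph{not} mixed; consequently the inner integral in $X$ (with exponent $1$) transforms merely by a Jacobian factor, while the outer supremum in $\Omega$ is only relabeled. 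Invoking the window independence of the modulation-space norm (the dilated window $\Psi\circ\mathcal{L}$ being admissible for fixed $\mathcal{L}$) yields $\|\mathfrak{T}_{\mathcal{L}}F\|_{M^{1,\infty}}\lesssim\|F\|_{M^{1,\infty}}$. Applying this with $F=\Theta$, which belongs to $M^{1,\infty}(\rdd)$ by Lemma \ref{chirp spaces}, gives $\theta_M\in M^{1,\infty}(\rdd)$.

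For the amalgam component I would pass to the Fourier side through the identity $\cF(M^{1,\infty})=W(\cF L^1,L^\infty)$ recorded in \eqref{W-M}, so that $\theta_M\in W(\cF L^1,L^\infty)$ is equivalent to $\cF^{-1}\theta_M\in M^{1,\infty}$. By \eqref{ThM} the Fourier transform $\Theta_M=\cF\theta_M(\xi,\eta)=e^{-2\pi i\xi\cdot M\eta}=\mathfrak{T}_{\mathcal{L}'}\Theta$, with $\mathcal{L}'=\mathrm{diag}(I,-M)\in\GLL$, is again a $\GLL$-image of $\Theta$; hence $\Theta_M\in M^{1,\infty}(\rdd)$ by the same invariance, and since $M^{1,\infty}$ is stable under reflection we obtain $\cF^{-1}\theta_M=\Theta_M(-\cdot)\in M^{1,\infty}(\rdd)$, that is $\theta_M\in W(\cF L^1,L^\infty)(\rdd)$. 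The two inclusions together prove the proposition.

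The main obstacle is the invariance step: one must check carefully that the mixed $L^1$–$L^\infty$ structure of $M^{1,\infty}$ survives an arbitrary invertible change of variables on $\rdd$, the point being that a change of variables on the base space acts separately on the space and frequency variables of the STFT and therefore does not spoil the unbalanced norm (in contrast with the $\tau$-kernels, where only scalar dilations are involved). This is precisely where the invertibility hypothesis $M\in\mathrm{GL}(d,\mathbb{R})$ is essential, in agreement with the degenerate case $M=0$ discussed after Theorem \ref{maint}, where $\theta_M=\delta\notin W(\cF L^1,L^\infty)$.
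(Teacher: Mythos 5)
Your proof is correct, and its engine is the same as the paper's: write the kernel (or its Fourier transform) as an invertible linear dilation of the standard chirp $\Theta$, invoke Lemma \ref{chirp spaces}, and shuttle between $M^{1,\infty}$ and $W(\cF L^{1},L^{\infty})$ through \eqref{W-M}. The route through it is genuinely more self-contained, though. The paper stays entirely on the Fourier side, writing $\Theta_{M}=D_{\tilde M}\Theta$ with $\tilde M=\mathrm{diag}(-I,M)$, and outsources the dilation invariance of \emph{both} spaces to \cite[Proposition 3.1 and Corollary 3.2]{cn met rep} and \cite[Proposition 3.2]{cdgn red int}; you instead prove the single invariance you need --- boundedness of $\mathfrak{T}_{\mathcal{L}}$ on $M^{1,\infty}(\rdd)$ for $\mathcal{L}\in\GLL$ --- directly from the exact covariance $V_{\Psi\circ\mathcal{L}}(\mathfrak{T}_{\mathcal{L}}F)(X,\Omega)=|\det\mathcal{L}|^{-1}V_{\Psi}F(\mathcal{L}X,\mathcal{L}^{\#}\Omega)$, which is correct and correctly exploits that the dilation acts separately on the two STFT variables, so the unbalanced $L^{1}$--$L^{\infty}$ norm only picks up a Jacobian; you then apply it twice, to $\theta_{M}=|\det M|^{-1}\mathfrak{T}_{\mathcal{L}}\Theta$ and to $\Theta_{M}=\mathfrak{T}_{\mathcal{L}'}\Theta$ as in \eqref{ThM}, and conclude with \eqref{W-M} plus reflection invariance, so that only one invariance result is ever needed. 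The price is that your $M^{1,\infty}$ half leans on the explicit formula \eqref{thM}, which the paper establishes only afterwards (Theorem \ref{intro2}); this is not circular --- the derivation of \eqref{thM} uses only the behaviour of $\cF$ under dilations and $\cF\Theta=D_{J}\Theta$, not the Proposition --- but it inverts the paper's logical ordering and is worth flagging. If you wished to avoid \eqref{thM} entirely, observe that your same change-of-variables identity also gives invariance of $W(\cF L^{1},L^{\infty})$ (the mixed norm with the roles of the two variables exchanged), after which the paper's purely Fourier-side argument, transferring both memberships of $\Theta_{M}$ back to $\theta_{M}$ through \eqref{W-M}, goes through verbatim.
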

\begin{proof}
Notice that
\[
\Theta_{M}\left(\xi,\eta\right)=e^{-2\pi i\xi\cdot M\eta}=D_{\tilde{M}}\Theta\left(\xi,\eta\right),
\]
where $\Theta\left(\xi,\eta\right)=e^{2\pi i\xi\eta}$ and $D_{Q}$ is the dilation operator $D_{Q}f\left(t\right)\coloneqq f\left(Qt\right)$
associated with an invertible matrix $Q\in\GLL$,
in particular 
\[
\qquad\tilde{M}=\left(\begin{array}{cc}
-I & 0\\
0 & M
\end{array}\right).
\]
It is clear that $\tilde{M}$ is invertible if and only if $M$ is
invertible. 

Therefore, according to the dilation properties in \cite[Proposition 3.1 and Corollary 3.2]{cn met rep},
the results in \cite[Proposition 3.2]{cdgn red int} and Lemma \ref{chirp spaces},
we have $\Theta_{M}\in M^{1,\infty}\left(\rdd\right)\cap W\left(\mathcal{F}L^{1},L^{\infty}\right)$.
Since $\Theta_{M}=\mathcal{F}\theta_{M}$ and $W\left(\mathcal{F}L^{1},L^{\infty}\right)=\mathcal{F}\left(M^{1,\infty}\right)$,
we conclude that $\theta_{M}\in M^{1,\infty}\left(\rdd\right)\cap W\left(\mathcal{F}L^{1},L^{\infty}\right)\left(\rdd\right)$. 
\end{proof}
In order to compute the expression of $\theta_{M}$, it seems useful
to recall some facts concerning dilations, tempered distributions
and Fourier transform. Given an invertible matrix $A\in\mathrm{GL}\left(d,\mathbb{R}\right)$
and a tempered distribution $u\in\mathcal{S}'\left(\mathbb{R}^{d}\right)$,
the dilated distribution $D_{A}u\in\mathcal{S}'\left(\mathbb{R}^{d}\right)$
is defined as follows:
\[
\left\langle D_{A}u,\phi\right\rangle \coloneqq\left\langle u,\left|\det A\right|^{-1}D_{A^{-1}}\phi\right\rangle ,\qquad\forall\phi\in\mathcal{S}\left(\mathbb{R}^{d}\right).
\]
The behaviour of the Fourier transform under dilations is given by
the following formula:
\[
\mathcal{F}D_{A}\phi=\left|\det A\right|^{-1}D_{A^{\#}}\mathcal{F}\phi,\qquad\phi\in\mathcal{S}\left(\mathbb{R}^{d}\right),
\]
where $A^{\#}=\left(A^{-1}\right)^{\top}$ as usual. 

The validity of the following relation can be verified by a direct
computation: for any $u\in\mathcal{S}'\left(\mathbb{R}^{d}\right)$
and $A\in\mathrm{GL}\left(d,\mathbb{R}\right)$,
\[
D_{A}\mathcal{F}^{-1}u=\left|\det A\right|^{-1}\mathcal{F}^{-1}D_{A^{\#}}u.
\]

It is now enough to notice that, according to the notation employed in the proof
of Proposition \ref{kernel spaces}, 
\[
\theta_{M}=\mathcal{F}^{-1}\Theta_{M}=\mathcal{F}^{-1}D_{\tilde{M}}\Theta=\left|\det M\right|^{-1}D_{\tilde{M}^{\#}}\mathcal{F}^{-1}\Theta.
\]
A short computation concludes the proof of the following result, which
confirms the initial intuition. 
\begin{theorem}\label{intro2}
Let $A=A_{M}\in\GLL$ be a Cohen-type
matrix with $M\in\mathrm{GL}\left(d,\mathbb{R}\right)$. Then, the kernel $\theta_M$ is given by \eqref{thM}.
\end{theorem}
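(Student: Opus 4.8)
The plan is to push through the dilation bookkeeping assembled just before the statement. Recall from \eqref{ThM} that $\Theta_M=\cF\theta_M$ is the elementary chirp $\Theta_M(\xi,\eta)=e^{-2\pi i\xi\cdot M\eta}$, and that (as in the proof of Proposition \ref{kernel spaces}) one has the factorization $\Theta_M=D_{\tilde M}\Theta$, where $\Theta(\xi,\eta)=e^{2\pi i\xi\cdot\eta}$ and $\tilde M=\mathrm{diag}(-I,M)$. First I would invert the Fourier transform and commute it past the dilation by means of the relation recorded above, obtaining
\[
\theta_M=\cF^{-1}\Theta_M=\cF^{-1}D_{\tilde M}\Theta=|\det M|^{-1}\,D_{\tilde M^{\#}}\,\cF^{-1}\Theta,
\]
where I used $|\det\tilde M|=|\det M|$. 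In this way the whole computation is reduced to two harmless ingredients: the inverse Fourier transform of the single elementary chirp $\Theta$, and the evaluation of the dilation $D_{\tilde M^{\#}}$.

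The only genuinely analytic step --- and the main obstacle --- is computing $\cF^{-1}\Theta$ for $\Theta(\xi,\eta)=e^{2\pi i\xi\cdot\eta}$. Since $\Theta$ is tempered but not integrable, the inversion has to be read in $\mathcal{S}'(\rdd)$. I would make it rigorous by Gaussian regularization: replace $\Theta$ by $\Theta_\varepsilon(\xi,\eta)=e^{2\pi i\xi\cdot\eta}\,e^{-\pi\varepsilon(|\xi|^{2}+|\eta|^{2})}$, whose inverse transform is an absolutely convergent Gaussian integral with complex covariance having positive-definite real part; this is evaluated by the Gaussian-integral formula \cite[App. A - Theorem 1]{folland}, exactly the tool already used in Lemma \ref{BAf}. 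Letting $\varepsilon\to0^{+}$ in $\mathcal{S}'$, the prefactor tends to $1$ and the quadratic form in the exponent degenerates so as to produce
\[
\cF^{-1}\Theta(x,\omega)=e^{-2\pi i\,x\cdot\omega}.
\]
Equivalently, one may integrate out one of the two $d$-dimensional variables to get a Dirac mass that collapses the remaining integral; the delicate point in either route is the passage to the limit, everything else being algebra.

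Finally I would assemble the pieces. Since $\tilde M^{\#}=\mathrm{diag}(-I,M^{\#})$ with $M^{\#}=(M^{-1})^{\top}$, applying the dilation gives
\[
D_{\tilde M^{\#}}\bigl(\cF^{-1}\Theta\bigr)(x,\omega)=e^{-2\pi i(-x)\cdot(M^{\#}\omega)}=e^{2\pi i\,x\cdot M^{\#}\omega},
\]
and multiplying by $|\det M|^{-1}$ delivers the explicit chirp kernel \eqref{thM}. Here the hypothesis $M\in\mathrm{GL}(d,\mathbb{R})$ is precisely what guarantees that $\tilde M$, hence $\tilde M^{\#}$, is invertible, so that all the dilations above are legitimate (the same hypothesis already entered Proposition \ref{kernel spaces}). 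As a consistency check I would specialize to $M=(\tau-1/2)I$, in which case the formula collapses exactly to the explicit $\tau$-kernel \eqref{kerneltau}.
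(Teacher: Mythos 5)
Your proposal is correct and follows essentially the same route as the paper: both proofs start from the factorization $\Theta_M=D_{\tilde{M}}\Theta$, commute $\cF^{-1}$ past the dilation at the cost of the factor $\left|\det M\right|^{-1}$ (using $|\det\tilde{M}|=|\det M|$), and reduce the whole statement to the inverse Fourier transform of the single chirp $\Theta$. The only methodological difference is local and in your favour: where the paper records the identity $\cF^{-1}\Theta=\mathcal{I}\cF\Theta=\cF\Theta=D_{J}\Theta$ without proof, you justify it by Gaussian regularization; this is sound, since $\Theta_{\varepsilon}(\xi,\eta)=e^{2\pi i\xi\cdot\eta}e^{-\pi\varepsilon\left(|\xi|^{2}+|\eta|^{2}\right)}$ is a generalized Gaussian with covariance matrix having real part $\varepsilon I>0$, so \cite[App. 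A - Theorem 1]{folland} applies and the limit $\varepsilon\to0^{+}$ in $\mathcal{S}'(\rdd)$ gives exactly $e^{-2\pi ix\cdot\omega}$.

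One point deserves attention. Your computation lands on $\theta_{M}(x,\omega)=\left|\det M\right|^{-1}e^{2\pi ix\cdot M^{\#}\omega}$, and your closing claim that this ``delivers \eqref{thM}'' is literally true only for symmetric $M$: indeed $x\cdot M^{\#}\omega=\left(M^{-1}x\right)\cdot\omega$, whereas \eqref{thM} reads $x\cdot M^{-1}\omega=\left(M^{\#}x\right)\cdot\omega$, and these differ unless $M=M^{\top}$. The discrepancy is not on your side: taking $\cF$ of both candidates and comparing with \eqref{ThM} (or integrating out one variable in $\cF_{\sigma}\chi_{M}$, which produces the Dirac mass $\delta(M\xi-\cdot)$) confirms that the correct kernel is yours, $\theta_{M}(x,\omega)=\left|\det M\right|^{-1}e^{2\pi i\omega\cdot M^{-1}x}$. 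The paper's own proof loses the transpose when composing dilations: with the convention $D_{Q}f(t)=f(Qt)$ one has $D_{A}D_{B}=D_{BA}$, hence $D_{\left(\tilde{M}^{\top}\right)^{-1}}D_{J}\Theta=D_{J\left(\tilde{M}^{\top}\right)^{-1}}\Theta$ with $J\left(\tilde{M}^{\top}\right)^{-1}=\left(\begin{smallmatrix}0 & \left(M^{\top}\right)^{-1}\\ I & 0\end{smallmatrix}\right)$, not $D_{\left(\tilde{M}^{\top}\right)^{-1}J}\Theta$ as written there. Note that your consistency check with $M=\left(\tau-1/2\right)I$ cannot detect this, since that matrix is symmetric and both expressions collapse to \eqref{kerneltau}. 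To make your write-up airtight, state the conclusion as \eqref{thM} with $M^{-1}$ replaced by $M^{\#}$, or add the remark that the two formulas coincide precisely when $M$ is symmetric.
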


\begin{proof}
We have
\[
\theta_{M}=\mathcal{F}^{-1}D_{\tilde{M}}\Theta=\left|\det M\right|^{-1}D_{\left(\tilde{M}^{\top}\right)^{-1}}\mathcal{F}^{-1}\Theta.
\]
Notice that
\[
\mathcal{F}^{-1}\Theta=\mathcal{I}\mathcal{F}\Theta=\mathcal{F}\Theta=D_{J}\Theta,
\]
so that
\[
\theta_{M}=\left|\det M\right|^{-1}D_{\left(\tilde{M}^{\top}\right)^{-1}J}\Theta,
\] with
\[
\left(\tilde{M}^{\top}\right)^{-1}J=\left(\begin{array}{cc}
0 & -I\\
-\left(M^{\top}\right)^{-1} & 0
\end{array}\right).
\]

\end{proof}
In particular, in the case of $\tau$-Wigner distributions, namely
$M=M_{\tau}=\left(\tau-\frac{1}{2}\right)I$, $\tau\in\left[0,1\right]\backslash\left\{ \tfrac{1}{2}\right\} $,
we recover a known result (see for instance \cite[Proposition 5.6]{bogetal}):
\[
\theta_{M_{\tau}}\left(x,\omega\right)=\frac{2^{d}}{\left|2\tau-1\right|^{d}}e^{2\pi i\frac{2}{2\tau-1}x\omega}.
\]

Notice that one cannot say much without assuming the invertibility
of $M$. We do not explore this situation, apart from mentioning that
for $M=0$ most of these results do not hold: for instance, since
$\theta_{0}=\delta$, it is easy to verify that $\theta_{0}\in M^{1,\infty}\left(\rdd\right)\backslash W\left(\mathcal{F}L^{1},L^{\infty}\right)\left(\rdd\right)$, cf. \cite{cdgn tfa bj}.

To conclude this section we prove that, in according with heuristic
expectations, linear perturbations of the Wigner distribution yield representations
which share the same smoothness and decay as the Wigner one.
\begin{theorem}
	Let $A=A_{M}\in\GLL$ be a matrix of
	Cohen's type with $M\in\mathrm{GL}\left(d,\mathbb{R}\right)$ and $f\in\mathcal{S}'\left(\mathbb{R}^{d}\right)$
	be a signal. Then, for $1\leq p,q\leq\infty$, we have 
	\[
	Wf\in M^{p,q}\left(\rdd\right) \Longleftrightarrow\mathcal{B}_{A_{M}}f\in M^{p,q}\left(\rdd\right).
	\]	
\end{theorem}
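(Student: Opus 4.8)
The plan is to recognize the passage from $Wf$ to $\mathcal{B}_{A_M}f$ as the action of a single \emph{Fourier multiplier}, and then to show that this multiplier, together with its inverse, is bounded on every modulation space. By Theorem \ref{maint} and \eqref{Bamtheta} we have $\mathcal{B}_{A_M}f = Wf*\theta_M$, so on the Fourier side
\[
\mathcal{F}\mathcal{B}_{A_M}f = \Theta_M\cdot\mathcal{F}Wf,\qquad \Theta_M(\xi,\eta)=e^{-2\pi i\xi\cdot M\eta},
\]
with $\Theta_M=\mathcal{F}\theta_M$ as in \eqref{ThM}. Thus $\mathcal{B}_{A_M}f=T_{\Theta_M}(Wf)$, where $T_\sigma h:=\mathcal{F}^{-1}(\sigma\cdot\mathcal{F}h)$. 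Since $\Theta_M$ is smooth with polynomially bounded derivatives and $\Theta_M\,\Theta_{-M}\equiv 1$, the operator $T_{\Theta_M}$ is an isomorphism of $\mathcal{S}'(\rdd)$ with inverse $T_{\Theta_{-M}}$; this also settles the a priori meaning of the statement, as $Wf$ and $\mathcal{B}_{A_M}f$ are then linked by one fixed bijection of $\mathcal{S}'(\rdd)$, and the equivalence reduces to transferring $M^{p,q}$-membership through it.

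First I would record that $\Theta_M\in\fui$ whenever $M\in\mathrm{GL}(d,\mathbb{R})$. This follows by the same reasoning as Proposition \ref{kernel spaces}: $\Theta_M$ is a non-degenerate chirp, hence it lies in $\fui\cap M^{1,\infty}$ by Lemma \ref{chirp spaces} and the dilation properties of Wiener amalgam spaces. Equivalently, $\Theta_M=\mathcal{F}\theta_M\in\mathcal{F}(M^{1,\infty})=\fui$ by \eqref{W-M}, using that $\theta_M\in M^{1,\infty}$ from Proposition \ref{kernel spaces}.

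Next I would invoke the Fourier multiplier theorem on modulation spaces (see \cite{grafakos,benyi}): a symbol in $\fui$ defines an operator $T_\sigma$ bounded on $M^{p,q}(\rdd)$ for all $1\le p,q\le\infty$. Concretely, by \eqref{W-M} one has $\|T_\sigma h\|_{M^{p,q}}\asymp\|\sigma\,\mathcal{F}h\|_{W(\mathcal{F}L^p,L^q)}\lesssim\|\sigma\|_{\fui}\,\|h\|_{M^{p,q}}$, since $\fui$ is a pointwise multiplier algebra on $W(\mathcal{F}L^p,L^q)$. Applying this with $\sigma=\Theta_M$ gives $\|\mathcal{B}_{A_M}f\|_{M^{p,q}}\lesssim\|Wf\|_{M^{p,q}}$, hence $Wf\in M^{p,q}\Rightarrow\mathcal{B}_{A_M}f\in M^{p,q}$. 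For the converse I would simply use invertibility: since $-M\in\mathrm{GL}(d,\mathbb{R})$ as well, $\Theta_{-M}=\Theta_M^{-1}\in\fui$, so $T_{\Theta_{-M}}$ is likewise bounded on $M^{p,q}$, and $Wf=T_{\Theta_{-M}}(\mathcal{B}_{A_M}f)$ yields $\|Wf\|_{M^{p,q}}\lesssim\|\mathcal{B}_{A_M}f\|_{M^{p,q}}$. The two bounds together show $T_{\Theta_M}$ is a bounded isomorphism of each $M^{p,q}(\rdd)$, which is exactly the asserted equivalence.

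The main obstacle is not the scheme, which is short, but pinning down the two inputs cleanly: that $\Theta_M\in\fui$ for invertible $M$ (equivalently $\theta_M\in M^{1,\infty}$, already available from Proposition \ref{kernel spaces}), and that the multiplier theorem holds at all indices, including the endpoints $p,q\in\{1,\infty\}$. The invertibility hypothesis on $M$ enters precisely here, guaranteeing that both $\Theta_M$ and $\Theta_M^{-1}=\Theta_{-M}$ are genuine $\fui$ symbols; for $M=0_d$ one has $\theta_0=\delta\notin\fui$ and both the argument and the conclusion break down, consistently with the discussion following Theorem \ref{intro2}.
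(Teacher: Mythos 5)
Your proof is correct and takes essentially the same route as the paper: both transfer the identity $\mathcal{B}_{A_M}f=Wf*\theta_M$ to the Fourier side, where the perturbation becomes pointwise multiplication by a non-degenerate chirp belonging to $W(\mathcal{F}L^1,L^\infty)(\rdd)$, and both directions then follow from the module property $W(\mathcal{F}L^1,L^\infty)\cdot W(\mathcal{F}L^p,L^q)\subset W(\mathcal{F}L^p,L^q)$ of \cite{fei was prod} together with the observation that invertibility of $M$ makes the reciprocal chirp an equally admissible multiplier. The only difference is cosmetic: you work with the plain Fourier transform and multiplier-operator language $T_{\Theta_M}$, whereas the paper phrases the identical computation through the symplectic Fourier transform and the ambiguity function $\mathcal{F}_{\sigma}Wf=Amb(f)$, the two being related by composition with $J$.
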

\begin{proof}
	Assume first $Wf\in M^{p,q}\left(\rdd\right)$, for some $1\leq p,q\leq \infty$. Taking the symplectic Fourier transform, this is equivalent to showing
	that 
	\[
	\theta_{M}\cdot Amb\left(f\right)\in W\left(\mathcal{F}L^{p},L^{q}\right)\left(\rdd\right).
	\]
	Notice that $Amb\left(f\right)\in W\left(\mathcal{F}L^{p},L^{q}\right)\left(\rdd\right)$
	because of the assumption on $Wf$. The claim thus follows from the
	known fact that $W\left(\mathcal{F}L^{p},L^{q}\right)\left(\rdd\right)$
	is a Banach module over $W\left(\mathcal{F}L^{1},L^{\infty}\right)\left(\rdd\right)$
	(cf. \cite[Theorem 1]{fei was prod}), namely 
	\[
	W\left(\mathcal{F}L^{1},L^{\infty}\right)\left(\rdd\right)\cdot W\left(\mathcal{F}L^{p},L^{q}\right)\left(\rdd\right)\subset W\left(\mathcal{F}L^{p},L^{q}\right)\left(\rdd\right),
	\]
	and from Proposition \ref{kernel spaces}, yielding $\theta_{M}\in W\left(\mathcal{F}L^{1},L^{\infty}\right)\left(\rdd\right)$. \par
	Vice versa, assume $\mathcal{B}_{A_{M}}f\in M^{p,q}\left(\rdd\right)$, for some $1\leq p,q\leq \infty$.  Taking the symplectic Fourier transform,
	$$
	\theta_{M}\cdot Amb\left(f\right)=\cF_\sigma  \mathcal{B}_{A_{M}}f\in  W\left(\mathcal{F}L^{p},L^{q}\right)\left(\rdd\right),
	$$
	that is, 
	$$
	Amb\left(f\right)=\theta_{M}^{-1}\cF_\sigma  \mathcal{B}_{A_{M}}f\in  W\left(\mathcal{F}L^{1},L^{\infty}\right)\cdot W\left(\mathcal{F}L^{p},L^{q}\right)\subset W\left(\mathcal{F}L^{p},L^{q}\right).
	$$
	Indeed, by \eqref{thM},  the function $\theta_{M}^{-1}$ is given by
	\begin{equation*}
	\theta_{M}^{-1}\left(x,\omega\right)=\left|\det M\right| e^{-2\pi ix\cdot M^{-1}\omega}
	\end{equation*}
	and satisfies $\theta_{M}^{-1}\in W\left(\mathcal{F}L^{1},L^{\infty}\right)\left(\rdd\right),$ since 
	$e^{-2\pi ix\cdot M^{-1}\omega}=\overline{e^{2\pi ix\cdot M^{-1}\omega}}$ and the chirp
	$e^{2\pi ix\cdot M^{-1}\omega}\in W\left(\mathcal{F}L^{1},L^{\infty}\right)\left(\rdd\right)$, by Proposition \ref{kernel spaces}.
	This concludes the proof.
\end{proof}
\begin{remark}
Similar results have been proved for the Born-Jordan distribution
in \cite[Theorem 4.1]{cdgn red int} and its $n$-th order generalization in \cite{cdgdn gen bj}, although with a significant difference:
no directional smoothing effects occur in our scenario. The subsequent
section, devoted to the study of interferences, will present an improvement in this direction.
\end{remark}

\subsection{A study of interferences}

In a broad sense, interferences are artefacts occurring when non-zero values of the representation appears
in regions of the phase space where the signal contains no energy.
In view of the applications, it is obviously desirable to reduce the
occurrence of these phenomena but the literature shows that this aim
can be accomplished only at the expenses of other possibly relevant
properties. In this spirit, it has been recently proved that the effectiveness
of interference damping is subtly related to covariance of the representation
with respect to symplectic transformations of the phase space, see
\cite{cdgdn sympcov interf} for details. In particular,
in view of \cite[Proposition 4.4 and Theorem 4.6]{cdgdn sympcov interf}, we
remark that full symplectic covariance is one of the properties of
the Wigner distribution which are lost under the effect of linear
perturbations. It can be still interesting to determine partial symmetries,
i.e., covariance with respect to certain subgroups of $\mathrm{Sp}\left(2d,\mathbb{R}\right)$.
In our case, however, this will also depend on $M$: for instance,
given 
\[
V_{P}=\left(\begin{array}{cc}
I & 0\\
-P & I
\end{array}\right),\qquad P\in\mathbb{R}^{d\times d}\text{ symmetric},
\]
(see \cite[Proposition 62 and Corollary 63]{dg sympmeth} for its role in symplectic
algebra) we see that $\theta_{M}\circ V_{P}=\theta_{M}$ for any $P$
if and only if $M^{-1}Px\cdot x=0$ for any $x\in\mathbb{R}^{d}$,
that is if $M$ is skew-symmetric.

As this argument suggests, the suppression of interferences cannot
be effectively performed by means of linear perturbations. In order
to experience this, we limit ourselves to dimension $d=1$ and follow
the geometrical approach employed in \cite{bco quadratic}. As a toy
model we consider signals consisting of pure frequencies confined
in disjoint time intervals. It is well known that the Wigner transform
displays ``ghost frequencies'' in between any couple of true frequencies
of the signal. A similar phenomenon can be studied also in higher
dimension considering Gaussian signals in the so-called ``diamond
configuration'', see again \cite{cdgdn sympcov interf}. 

We remark that for $d=1$ the perturbation matrix $M$ boils down
to a scalar $m\in\mathbb{R}$. Let $f$ be a signal with a frequency
$\omega_{1}$ appearing in the interval $I_{1}=\left[x_{1},x_{1}+h_{1}\right]$
and $\omega_{2}$ in $I_{2}=\left[x_{2},x_{2}+h_{2}\right]$, with
$h_{2}\ge h_{1}>0$ such that $x_{1}+h_{1}<x_{2}$. The distribution in \eqref{BAM} becomes 
\[
\mathcal{B}_{m}f\left(x,\omega\right)=\int_{\mathbb{R}}e^{-2\pi i\omega y}f\left(x+\left(m+\frac{1}{2}\right)y\right)\overline{f\left(x+\left(m-\frac{1}{2}\right)y\right)}dy.
\]
We see that $\mathcal{B}_{m}f$ is supported in the diamond-shaped
regions $D_{i}$, $i=1,...,4$, (see Figure \ref{fig1}) obtained by
intersecting the following straight lines passing through the endpoints of the time intervals:
\[
\begin{cases}
x+\left(m\pm\frac{1}{2}\right)y=x_{1}\\
x+\left(m\pm\frac{1}{2}\right)y=x_{1}+h_{1}\\
x+\left(m\pm\frac{1}{2}\right)y=x_{2}\\
x+\left(m\pm\frac{1}{2}\right)y=x_{2}+h_{2}.
\end{cases}
\]

\begin{figure}
\begin{centering}
\includegraphics[scale=0.6]{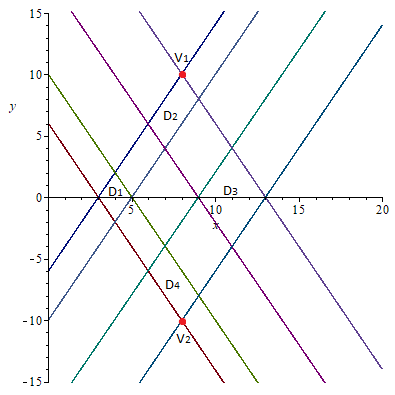}
\par\end{centering}
\caption{Support of $\mathcal{B}_{m}f$ with $m=0$, $I_{1}=\left[3,5\right]$,
$I_{2}=\left[9,13\right]$.} \label{fig1}
\end{figure}
With the notation of the figure, we see that $D_{1}$ and $D_{3}$
give account for the true frequencies of the signal, while $D_{2}$
and $D_{3}$ are non-zero interferences. A short computation shows
that the coordinates of the two points $V_{1}$ and $V_{2}$ are 
\[
V_{1}=\left(\frac{2m+1}{2}\left(x_{2}+h_{2}\right)-\frac{2m-1}{2}x_{1},x_{1}-\left(x_{2}+h_{2}\right)\right),
\]
\[
V_{2}=\left(\frac{2m+1}{2}x_{1}-\frac{2m-1}{2}\left(x_{2}+h_{2}\right),\left(x_{2}+h_{2}\right)-x_{1}\right),
\]
hence we see that the only effect of the perturbation parameter $m$
is the horizontal translation of the diamond's corners, giving no
room for damping. The only reduction procedure that can still be performed
is the one proposed by Boggiatto et al. in \cite{bco quadratic},
even if its validity is restricted to the special class of signals
examined insofar. Furthermore, notice that we are in fact studying
re-parametrized $\tau$-Wigner distributions in a broad sense, since
now $m=\tau-(1/2)$ is free to run over $\mathbb{R}$. As already
seen before and also noticeable from the coordinates of $V_{1}$ and
$V_{2}$, when $m\in\mathbb{R}\setminus\left[-\tfrac{1}{2},\tfrac{1}{2}\right]$,
the support of the signal is no longer conserved - neither in weak
sense.

To conclude this section, we notice that an efficient reduction of
interferences requires the Cohen's kernel to show some decay at infinity
- which is not the case of the chirp-like kernel $\theta_{M}$. In
order to enhance this feature and at the same time to not lose other
relevant ones satisfied by MWDs in the Cohen's class, it seems reasonable
to consider the Cohen's distributions with kernels of type $\theta_{M}*\varphi$,
where $\varphi\in\mathcal{S}'\left(\rdd\right)$ is a decaying
distribution satisfying suitable properties - for instance, one may
ask that $\hat{\varphi}\left(\zeta_{1},\zeta_{2}\right)=\Phi\left(\zeta_{1}\cdot\zeta_{2}\right)$
with $\Phi\left(0\right)=1$ in order to keep the correct marginal
densities. This kind of investigation deserves a special treatment
in order to balance the trade-off between theoretically relevant features
and practical purposes, thus it cannot be provided here. We confine
ourselves to mention that choosing as smoothing distribution the one
corresponding to the Born-Jordan kernel, namely
\[
\varphi_{\sigma}\left(\zeta\right)=\hat{\varphi}\left(J\zeta\right)=\frac{\sin\left(\pi\zeta_{1}\zeta_{2}\right)}{\pi\zeta_{1}\zeta_{2}},\qquad\zeta=\left(\zeta_{1},\zeta_{2}\right)\in\rdd,
\]
allows to enjoy some smoothing phenomena recently investigated, cf. for example \cite[Theorem 4.1]{cdgn red int}. 

\subsection{Continuity on functional spaces}

In this section we prove that the continuity of bilinear distributions
associated with matrices of Cohen's type on modulation and Wiener
amalgam spaces is a property stable under linear perturbations. We work with weights of polynomial type as in \eqref{vs}. 
\begin{theorem}\label{sharpbou}
Let $A=A_{M}\in\mathrm{GL}\left(2d,\mathbb{\mathbb{R}}\right)$ be a
Cohen-type matrix. Let $1\le p_{i},q_{i},p,q<\infty$, $i=1,2$, such
that 
\begin{equation}
p_{i},q_{i}\le q,\qquad i=1,2,\label{eq:cond1sharp}
\end{equation}
and 
\begin{equation}
\frac{1}{p_{1}}+\frac{1}{p_{2}}\ge\frac{1}{p}+\frac{1}{q},\qquad\frac{1}{q_{1}}+\frac{1}{q_{2}}\ge\frac{1}{p}+\frac{1}{q}.\label{eq:cond2sharp}
\end{equation}

\begin{enumerate}[label=(\roman*)]
	\item If $f_{1}\in M_{v_{\left|s\right|}}^{p_{1},q_{1}}\left(\mathbb{R}^{d}\right)$
	and $f_{2}\in M_{v_{s}}^{p_{2},q_{2}}\left(\mathbb{R}^{d}\right)$,
	then $\mathcal{B}_{A}\left(f_{1},f_{2}\right)\in M_{1\otimes v_{s}}^{p,q}\left(\mathbb{R}^{2d}\right)$,
	and the following estimate holds:
	\[
	\left\Vert \mathcal{B}_{A}\left(f_{1},f_{2}\right)\right\Vert _{M_{1\otimes v_{s}}^{p,q}}\lesssim_{M}\left\Vert f_{1}\right\Vert _{M_{v_{\left|s\right|}}^{p_{1},q_{1}}}\left\Vert f_{2}\right\Vert _{M_{v_{s}}^{p_{2},q_{2}}}.
	\] {
	\item Assume further that both $M-\left(1/2\right)I$ and $M+\left(1/2\right)I$
	are invertible (equivalently: $A_M$ is right-regular, or $P_{M}$ is invertible, cf. \eqref{PM}). Set $v_{s}^M \equiv v_{s}\circ (I+P_M^{-1})^{-1}$. If $f_{1}\in M_{v_{\left|s\right|}}^{p_{1},q_{1}}\left(\mathbb{R}^{d}\right)$
	and $f_{2}\in M_{v_{s}^M}^{p_{2},q_{2}}\left(\mathbb{R}^{d}\right)$,
	then $\mathcal{B}_{A}\left(f_{1},f_{2}\right)\in W\left(\mathcal{F}L^{p},L_{v_{s}^M}^{q}\right)\left(\mathbb{R}^{2d}\right)$,
	and the following estimate holds:
	\[
	\left\Vert \mathcal{B}_{A}\left(f_{1},f_{2}\right)\right\Vert _{W\left(\mathcal{F}L^{p},L_{v_{s}^M}^{q}\right)}\lesssim_{M}\left(C_{M}\right)^{1/q-1/p}\left\Vert f_{1}\right\Vert _{M_{v_{\left|s\right|}}^{p_{1},q_{1}}}\left\Vert f_{2}\right\Vert _{M_{v_{s}^M}^{p_{2},q_{2}}},
	\]
	where 
	\begin{equation}\label{CM}
	C_{M}=\left|\det\left(M+\frac{1}{2}I\right)\det\left(M-\frac{1}{2}I\right)\right|>0.
\end{equation}}
\end{enumerate}
\end{theorem}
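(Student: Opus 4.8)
The engine of both estimates is the short-time product formula in its Cohen's class form \eqref{eq:magic cohen}. The plan is to fix nonzero windows $\phi,\psi\in\mathcal{S}(\rd)$ and set $\Phi=\mathcal{B}_A(\phi,\psi)$; by Proposition \ref{def bilA triple} together with the orthogonality relations, $\Phi$ is a nonzero Schwartz function on $\rdd$, hence an admissible window for computing both modulation and Wiener amalgam norms. Writing $z=(z_1,z_2)$, $\zeta=(\zeta_1,\zeta_2)\in\rdd$ and using \eqref{eq:magic cohen} with the explicit $P_M$ of \eqref{PM}, the quantity $|V_{\Phi}\mathcal{B}_A(f_1,f_2)(z,\zeta)|$ factors as
\[
\bigl|V_{\phi}f_1\bigl(z+P_MJ\zeta\bigr)\bigr|\,\bigl|V_{\psi}f_2\bigl(z+(I+P_M)J\zeta\bigr)\bigr|,
\]
so that everything is reduced to a mixed-norm estimate for a product of two short-time Fourier transforms whose arguments differ exactly by $J\zeta$.

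For part (i) I would compute the $M^{p,q}_{1\otimes v_s}$-norm by integrating first in $z$ and then in $\zeta$. For fixed $\zeta$ the translation $u=z+P_MJ\zeta$ (Jacobian one) turns the inner $p$-integral into the correlation $\int_{\rdd}|V_{\phi}f_1(u)|^{p}\,|V_{\psi}f_2(u+J\zeta)|^{p}\,du$, and the further change $\eta=J\zeta$ (again Jacobian one, since $|\det J|=1$) exposes the correlation variable. The weight is handled by the $v_{|s|}$-moderateness of $v_s$: from $\eta=(u+\eta)-u$ one gets $v_s(\eta)\lesssim v_{|s|}(u)\,v_s(u+\eta)$, which is precisely why $f_1$ is measured against $v_{|s|}$ and $f_2$ against $v_s$. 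Absorbing these into $G_1=|V_{\phi}f_1|\,v_{|s|}$ and $G_2=|V_{\psi}f_2|\,v_s$, the bound collapses to
\[
\|\mathcal{B}_A(f_1,f_2)\|_{M^{p,q}_{1\otimes v_s}}\lesssim\bigl\|G_1^{p}\star G_2^{p}\bigr\|_{L^{q/p}(\rdd)}^{1/p},
\]
where $\star$ is correlation on $\rdd$. Since $G_i^{p}\in L^{p_i/p,q_i/p}$, Young's inequality for mixed-norm spaces applied on $\rdd=\rd\times\rd$ gives exactly $\|f_1\|_{M^{p_1,q_1}_{v_{|s|}}}\|f_2\|_{M^{p_2,q_2}_{v_s}}$ in the equality case $1/p_1+1/p_2=1/p+1/q$, $1/q_1+1/q_2=1/p+1/q$. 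The inequalities \eqref{eq:cond2sharp} are then reduced to this equality case through the inclusion relations of modulation spaces, while \eqref{eq:cond1sharp} guarantees that the Young exponents stay admissible.

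Part (ii) follows the same scheme but, as dictated by the realization of $W(\mathcal{F}L^{p},L^{q}_{v^M_s})$ via \eqref{W-M}, with the order of integration reversed: one integrates first in $\zeta$ and then in $z$. Carrying out the $\zeta$-integration now requires solving for $\zeta$ in terms of the argument of $V_{\phi}f_1$, which is possible exactly because both $M+\tfrac12 I$ and $M-\tfrac12 I$ (equivalently $P_M$ and $I+P_M$) are invertible, i.e.\ $A_M$ is right-regular. This change of variables is no longer measure-preserving: its Jacobian produces the factor $C_M=|\det(M+\tfrac12 I)\det(M-\tfrac12 I)|$ of \eqref{CM}, which after the two integrations and the final root appears with the stated exponent $1/q-1/p$, and it distorts the weight $v_s$ into $v^M_s=v_s\circ(I+P_M^{-1})^{-1}$. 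With these adjustments the problem again reduces to the same mixed-norm correlation inequality as in part (i).

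The substitutions and the verification that $\Phi$ is an admissible window are routine. The genuine obstacle is the mixed-norm bilinear step: one must phrase Young's convolution inequality on the $4d$-dimensional phase space while respecting its block structure, transport the two distinct weights $v_{|s|}$ and $v_s$ (resp.\ $v^M_s$) through the correlation, and reduce the inequalities \eqref{eq:cond2sharp} to the equality case without violating \eqref{eq:cond1sharp}. In part (ii) the additional bookkeeping of the Jacobian $C_M$ and of the transformed weight $v^M_s$ along the reversed order of integration is the most delicate point, and is where the invertibility hypotheses on $M\pm\tfrac12 I$ are genuinely used.
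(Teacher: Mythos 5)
Your proposal is correct and follows essentially the same route as the paper: the paper likewise takes $\Phi=\mathcal{B}_A g$ as window, uses the short-time product formula \eqref{eq:magic cohen} to factor $|V_{\Phi}\mathcal{B}_A(f_1,f_2)(z,\zeta)|$ into a product of STFTs, reduces part (i) by a measure-preserving shift in $z$ to the correlation bound $\bigl\||V_gf_2|^p*|(V_gf_1)^*|^p\bigr\|_{L^{q/p}_{v_{ps}}}^{1/p}$, and in part (ii) reverses the order of integration and substitutes $\eta=z+P_MJ\zeta$ (exactly where right-regularity, i.e.\ invertibility of $P_M$ and $I+P_M$, is used), with the two Jacobians $|\det P_M|=|\det(I+P_M)|=C_M$ producing the factor $C_M^{1/q-1/p}$ and the transported weight $v_s^M$. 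The final mixed-norm Young step with the weight splitting $v_s(\eta)\lesssim v_{|s|}(u)\,v_s(u+\eta)$, which you sketch directly, is precisely what the paper delegates to \cite[Theorem 3.1]{cn sharp int}.
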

\begin{proof}
Fix $g\in\mathcal{S}\left(\mathbb{R}^{d}\right)$
and set $\Phi=\mathcal{B}_{A}g\in\mathcal{S}\left(\rdd\right)$.
The key insight here is given by the short-time product formula in \eqref{STP}. Precisely, for any $z,\zeta\in\rdd$
we have
\[
\left|V_{\Phi}\mathcal{B}_{A}\left(f_{1},f_{2}\right)\left(z,\zeta\right)\right|=\left|V_{g}f_{1}\left(z+P_{M}J\zeta\right)\overline{V_{g}f_{2}\left(z+\left(I+P_{M}\right)J\zeta\right)}\right|,
\]
where $P_{M}$ is the matrix defined in \eqref{PM}. Consequently, for $p,q<\infty$,
\begin{align*}
& \left\Vert \mathcal{B}_{A}\left(f_{1},f_{2}\right)\right\Vert _{M_{1\otimes v_{s}}^{p,q}}\\
\asymp & \left(\int_{\rdd}\left(\int_{\rdd}\left|V_{g}f_{1}\left(z+P_{M}J\zeta\right)\right|^{p}\left|V_{g}f_{2}\left(z+\left(I+P_{M}\right)J\zeta\right)\right|^{p}dz\right)^{q/p}v_{s}\left(J\zeta\right)^{q}d\zeta\right)^{1/q}.
\end{align*}
The change of variables $z\mapsto z-\left(I+P_{M}\right)J\zeta$ turns
the integral over $z$ into a convolution, then
\begin{align*}
\left\Vert \mathcal{B}_{A}\left(f_{1},f_{2}\right)\right\Vert _{M_{1\otimes v_s}^{p,q}} & \asymp\left(\int_{\rdd}\left(\left|V_{g}f_{2}\right|^{p}*\left|\left(V_{g}f_{1}\right)^{*}\right|^{p}\right)^{q/p}\left(J\zeta\right)v_{s}\left(J\zeta\right)^{q}d\zeta\right)^{1/q}\\
 & =\left\Vert \left|V_{g}f_{2}\right|^{p}*\left|\left(V_{g}f_{1}\right)^{*}\right|^{p}\right\Vert _{L_{v_{ps}}^{q/p}}^{1/p}.
\end{align*}

From now on, the proof proceeds exactly as in \cite[Theorem 3.1]{cn sharp int}.
Similar arguments also hold whenever  $p=\infty$ or $q=\infty$.

{For what concerns boundedness on Wiener amalgam spaces, notice first
that if $A_{M}$ is right regular, $\left(I+P_{M}\right)$ and also
$\left(I+P_{M}^{-1}\right)$ are invertible, with
\begin{equation}\label{pm}
\left(I+P_{M}^{-1}\right)^{-1}=\left(I+P_{M}\right)^{-1}P_{M}.
\end{equation}
Therefore,
\begin{align*}
& \left\Vert \mathcal{B}_{A}\left(f_{1},f_{2}\right)\right\Vert _{W\left(\mathcal{F}L^{p},L_{v_{s}\circ\left(I+P_{M}\right)^{-1}}^{q}\right)}\\
\asymp & \left(\int_{\mathbb{R}^{2d}}\left(\int_{\mathbb{R}^{2d}}\left|V_{g}f_{1}\left(z+P_{M}J\zeta\right)\right|^{p}\left|V_{g}f_{2}\left(z+\left(I+P_{M}\right)J\zeta\right)\right|^{p}d\zeta\right)^{q/p}v_{s}\left(\left(I+P_{M}\right)^{-1}z\right)^{q}dz\right)^{1/q}.
\end{align*}
Using the change of variables $\eta=z+P_{M}J\zeta$ and then the matrix equality \eqref{pm}, we can write
{\small{}
\begin{align*}
& \left\Vert \mathcal{B}_{A}\left(f_{1},f_{2}\right)\right\Vert _{W\left(\mathcal{F}L^{p},L_{v_{s}\circ\left(I+P_{M}\right)^{-1}}^{q}\right)}\\
\asymp & C_{M}^{-1/p}\left(\int_{\mathbb{R}^{2d}}\left(\int_{\mathbb{R}^{2d}}\left|V_{g}f_{1}\left(\eta\right)\right|^{p}\left|V_{g}f_{2}\left(\left(I+P_{M}^{-1}\right)\eta-P_{M}^{-1}z\right)\right|^{p}d\eta\right)^{q/p}\right.\\
&\,\,\qquad\qquad\cdot\,\left. v_{s}\left(\left(I+P_{M}\right)^{-1}z\right)^{q}dz\right)^{1/q}\\
= & C_{M}^{-1/p}\left(\int_{\mathbb{R}^{2d}}\left(\int_{\mathbb{R}^{2d}}\left|V_{g}f_{1}\left(\eta\right)\right|^{p}\left|V_{g}f_{2}\left(\left(I+P_{M}^{-1}\right)\left(\eta-\left(I+P_{M}^{-1}\right)^{-1}P_{M}^{-1}z\right)\right)\right|^{p}d\eta\right)^{q/p}\right.\\
&\,\,\qquad\qquad\cdot\,\left. v_{s}\left(\left(I+P_{M}\right)^{-1}z\right)^{q}dz\right)^{1/q}\\
= & C_{M}^{-1/p}\left(\int_{\mathbb{R}^{2d}}\left(\left|V_{g}f_{1}\right|^{p}*\left|\left((V_{g}f_{2})^*\left(\left(I+P_{M}^{-1}\right)\cdot\right)\right)^{*}\right|^{p}\right)^{q/p}\left(\left(I+P_{M}\right)^{-1}z\right)\right.\\
& \,\,\qquad\qquad\cdot\,\left.  v_{s}\left(\left(I+P_{M}\right)^{-1}z\right)^{q}dz\right)^{1/q}.\\
= & C_{M}^{1/q-1/p}\left\Vert \left|V_{g}f_{1}\right|^{p}*\left|\left((V_{g}f_{2})^*\left(\left(I+P_{M}^{-1}\right)\cdot\right)\right)^{*}\right|^{p}\right\Vert _{L_{v_{ps}}^{q/p}}^{1/p},
\end{align*}}
where  the constant $C_M$ is defined in \eqref{CM} and we write $(V_{g}f_{2})^*(z)=(V_{g}f_{2})(-z)$. Again, the proof proceeds hereinafter as in \cite[Theorem 3.1]{cn sharp int}.
}
\end{proof}
{
\begin{remark}
We remark that the given estimates are not sharp, since we employed window functions depending on $M$ in order to perform the computations and thus the hidden constants in the symbol $\lesssim_{M}$ may depend on $M$. However, the comments of \cite[Remark 3.2]{cn sharp int} are still valid here. In particular, the result holds for more general weight functions:
for instance, sub-exponential weights or polynomial weights satisfying
formula $\left(4.10\right)$ in \cite{toft cont 2} are suitable choices.
Notice that the proof of the theorem in fact reduces to the study of continuity estimates for convolutions on weighted Lebesgue mixed-norm
spaces.
We would also point out that results in the spirit of Theorem \ref{sharpbou}(ii) have been already proved for $\tau$-Wigner distributions in \cite[Lemma 3.1]{cdet18} and \cite{cnt18} and can be easily generalized to MWDs. In particular, we recover \cite[Lemma 4.2]{cnt18} by noticing that $(I+P_M)^{-1}=\cB_{\tau}$ and $(I+P_M^{-1})^{-1}=\cU_{\tau}$ for $M=(\tau-1/2)I$, where the matrices $\mathcal{B}_\tau$ and $\cU_{\tau}$ are defined in \cite[(5) and (26)]{cnt18}.
\end{remark}
}

Under more restrictive conditions on the Cohen-type matrix, namely
assuming right-regularity (hence that both $M-(1/2)I$ and
$M+(1/2)I$ are invertible), we are able to apply Proposition
\ref{right-regular continuity}, obtaining boundedness results on Lebesgue spaces.  
\begin{theorem}
Let $A=A_{M}\in\GLL$ be a right-regular
Cohen-type matrix. For any $1<p<\infty$ and $q\ge2$ such that $q'\le p\le q$, $f\in L^{p}\left(\mathbb{R}^{d}\right)$
and $g\in L^{p'}\left(\mathbb{R}^{d}\right)$, the following facts
hold.
\begin{enumerate}[label=(\roman*)]
\item  $\mathcal{B}_{A}\left(f,g\right)\in L^{q}\left(\rdd\right)$,
with
\[
\left\Vert \mathcal{B}_{A}\left(f,g\right)\right\Vert _{q}\le\frac{\left\Vert f\right\Vert _{p}\left\Vert g\right\Vert _{p'}}{\left|\det\left(M+\frac{1}{2}I\right)\right|^{\frac{1}{p}-\frac{1}{q}}\left|\det\left(M-\frac{1}{2}I\right)\right|^{\frac{1}{p'}-\frac{1}{q}}}.
\]
In particular, $\mathcal{B}_{A}\left(f,g\right)$ is bounded $\left(q=\infty\right)$.
\item $\mathcal{B}_{A}\left(f,g\right)\in C_{0}\left(\rdd\right)$. 
\end{enumerate}
\end{theorem}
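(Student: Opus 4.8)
The plan is to obtain this statement as an immediate specialization of Proposition \ref{right-regular continuity} to the Cohen-type matrix $A = A_M$; essentially all the analytic work has already been carried out there, so the task reduces to plugging in the correct data for $A_M$. First I would read off the submatrices from the block form \eqref{AM}, namely $A_{12} = M + (1/2)I$ and $A_{22} = M - (1/2)I$. By the definition of right-regularity, $A_M$ is right-regular precisely when both these blocks are invertible, which is exactly the standing hypothesis; thus Proposition \ref{right-regular continuity} applies verbatim.

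The one quantity that still has to be computed is the global determinant $\left|\det A_M\right|$. Since $A_M$ has the block structure $A_{11} = A_{21} = I$ already encountered in the proof of Theorem \ref{maint}, I would invoke the same linear-algebra identity from \cite[App.~A - Lemma.~4]{folland} used there, which gives $\det A_M = (-1)^{d}\det I = (-1)^{d}$, hence $\left|\det A_M\right| = 1$. This is the only step requiring any genuine verification.

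With these substitutions, estimate \eqref{qnorm estimate} of Proposition \ref{right-regular continuity} specializes as follows: the factor $\left|\det A\right|^{1/q}$ becomes $1$ and disappears, while $\left|\det A_{12}\right|^{1/p-1/q}$ and $\left|\det A_{22}\right|^{1/p'-1/q}$ turn into the two determinant factors $\left|\det(M+\tfrac{1}{2}I)\right|^{1/p-1/q}$ and $\left|\det(M-\tfrac{1}{2}I)\right|^{1/p'-1/q}$ appearing in the statement. This yields the claimed bound in part (i) on the nose; the boundedness assertion $\mathcal{B}_{A}(f,g)\in L^{\infty}$ is then the limiting case $q=\infty$. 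Finally, part (ii) is nothing but part (ii) of Proposition \ref{right-regular continuity}, which already delivers $\mathcal{B}_{A}(f,g)\in C_{0}(\rdd)$ under right-regularity. Being a pure substitution argument, there is no real obstacle beyond confirming $\left|\det A_M\right|=1$, which in turn is the determinant identity already established inside the proof of Theorem \ref{maint}.
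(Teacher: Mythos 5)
Your proposal is correct and matches the paper's own route exactly: the paper likewise obtains this theorem by specializing Proposition \ref{right-regular continuity} to $A=A_{M}$, reading off $A_{12}=M+\tfrac{1}{2}I$, $A_{22}=M-\tfrac{1}{2}I$, and using $\left|\det A_{M}\right|=1$ (which follows, as you note, from the identity $\det A_{M}=(-1)^{d}\det N$ with $N=I$ already invoked in the proof of Theorem \ref{maint}). Your verification of the determinant and the identification of right-regularity with the invertibility of $M\pm\tfrac{1}{2}I$ are both accurate, so there is nothing to add.
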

We remark that for MWDs in the Cohen's class, a number of these properties
still hold under the weaker assumption $M\in\GLL$.
This is in fact a consequence of convolving with a bounded kernel $\theta_{M}\in L^{\infty}\left(\rdd\right)$.


\section*{Acknowledgements}
The authors would like to thank the anonymous referees for the careful review and the constructive comments, which definitely helped to improve the readability and the quality of the manuscript.

\end{document}